\newif\ifHAL
\newtheorem{theorem}{Theorem}[section]
\newtheorem{proposition}[theorem]{Proposition}
\newtheorem{lemma}[theorem]{Lemma}
\newtheorem{corollary}[theorem]{Corollary}
\newtheorem{remark}[theorem]{Remark}
\numberwithin{equation}{section}
\newtheorem{theorem}{Theorem}[section]
\newtheorem{proposition}[theorem]{Proposition}
\newtheorem{lemma}[theorem]{Lemma}
\newtheorem{remark}[theorem]{Remark}
\numberwithin{equation}{section}
\newcommand{\rev}[1]{#1} % \newcommand{\rev}[1]{{\color{red} #1}}
\begin{document}

\ifHAL
\title{A priori and a posteriori analysis of the discontinuous Galerkin approximation
of the time-harmonic Maxwell's equations under minimal regularity assumptions}

\author{T. Chaumont-Frelet\footnotemark[1], A. Ern\footnotemark[2]}

\footnotetext[1]{Inria Univ. Lille and Laboratoire Paul Painlev\'e, 59655 Villeneuve-d'Ascq, France}

\footnotetext[2]{CERMICS, Ecole nationale des ponts et chauss\'ees, IP Paris, 6 \& 8 avenue B.~Pascal, 77455 Marne-la-Vall\'{e}e, France and Inria, 48 Rue Barrault, 75647 Paris, France}

\date{Draft version, \today}
\else
\title[Analysis of IPDG for Maxwell's equations with minimal regularity]{A priori and a posteriori analysis of the discontinuous Galerkin approximation
of the time-harmonic Maxwell's equations under minimal regularity assumptions}

\author{T. Chaumont-Frelet}
\address{Inria Univ. Lille and Laboratoire Paul Painlev\'e, 59655 Villeneuve-d'Ascq, France}
\email{theophile.chaumont@inria.fr}

\author{A. Ern}
\address{CERMICS, Ecole nationale des ponts et chauss\'ees, IP Paris, 6 \& 8 avenue B.~Pascal, 77455 Marne-la-Vall\'{e}e, France and Inria, 48 Rue Barrault, 75647 Paris, Francee}
\email{alexandre.ern@enpc.fr}

\subjclass[2020]{Primary 65N30, 78M10, 65N15}
\keywords{Time-harmonic Maxwell's equations, discontinuous Galerkin, Interior penalty, Duality argument, Asymptotic optimality, A posteriori error analysis}

\date{}
\fi

\maketitle

\begin{abstract} 
We derive a priori and a posteriori error estimates for the discontinuous Galerkin (dG)
approximation of the time-harmonic Maxwell's equations. Specifically, we consider an interior
penalty dG method, and establish error estimates that are valid under minimal regularity
assumptions and involving constants that do not depend on the frequency for sufficiently
fine meshes. The key result of our a priori error analysis is that the dG solution is
asymptotically optimal in an augmented energy norm that contains the dG stabilization.
Specifically, up to a constant that tends to one as the mesh is refined,
the dG solution is as accurate as the best approximation in the same
norm. \rev{The main insight is that the quantities controlling the smallness of the 
mesh size are essentially those already appearing in the conforming setting.}
We also show that for fine meshes, the inf-sup stability constant
is as good as the continuous one up to a factor two. Concerning the a posteriori
analysis, we consider a residual-based error estimator under the assumption of
piecewise constant material properties. We derive a global upper bound and local lower
bounds on the error with constants that \textup{(i)} only depend on the shape-regularity
of the mesh if it is sufficiently refined and \textup{(ii)} are independent of the
stabilization bilinear form.
\end{abstract}

\ifHAL
\noindent
\textbf{Keywords.} Time-harmonic Maxwell's equations, discontinuous Galerkin, Interior penalty, Duality argument, Asymptotic optimality, A posteriori error analysis

\medskip\noindent
\textbf{MSC.} 65N30, 78M10, 65N15
\fi

\section{Introduction}

Let $\Dom\subset \Real^d$, $d=3$, be an open, bounded, Lipschitz polyhedron 
with boundary $\front$ and outward unit normal $\bn_\Dom$. 
We do not make any simplifying assumption on the topology of $\Dom$.
We use boldface fonts for vectors, vector fields, and
functional spaces composed of such fields. More details
on the notation are given in Section~\ref{sec:continuous}. 

Given a positive real number $\omega>0$ representing a frequency and a source
term $\bJ: \Dom \to \mathbb R^3$, and focusing for simplicity on homogeneous
Dirichlet boundary conditions (a.k.a.~perfect electric conductor boundary conditions),
the model problem consists in finding $\bE: \Dom \to \mathbb R^3$ such that
\begin{subequations}
\label{eq_maxwell_strong}
\begin{alignat}{2}
-\omega^2 \eps \bE+\ROT(\bmu^{-1}\ROT \bE) &= \bJ &\quad&\text{in $\Dom$},
\\
\bE \CROSS \bn_\Dom &= \bzero &\quad&\text{on $\partial \Dom$},
\end{alignat}
\end{subequations}
where $\eps$ represents the electric permittivity of the
materials contained in $\Dom$ and $\bmu$ their magnetic permeability. 
Both material properties can vary in $\Dom$ and take symmetric
positive-definite values with eigenvalues uniformly bounded from above and
from below away from zero. 
We assume that $\omega$ is not a resonant frequency,
so that \eqref{eq_maxwell_strong} is uniquely solvable in $\Hrotz$
for every $\bJ$ in the topological dual space $\Hrotz'$. 
The time-harmonic
Maxwell's equations \eqref{eq_maxwell_strong} are one of the central models
of electrodynamics. Therefore, efficient discretizations are
a cornerstone for the computational modelling of electromagnetic 
wave propagation \cite{Hiptmair_Acta_Num_2002,Monk_book_2003}. In this work,
we focus on the discontinuous Galerkin (dG) method.

DG methods employ approximation spaces composed of nonconforming (discontinuous, broken) 
polynomials on the mesh. They are attractive since they easily allow for more
flexibility in the mesh and for local variations of the polynomial degree. \rev{[--]} 
% Besides, dG schemes are of special interest in time-dependent wave propagation
% problems as they lead to block-diagonal mass matrices
% \cite{GroSS:07}.
% Notice that time-dependent wave propagation problems are also relevant here since
% \eqref{eq_maxwell_strong} can arise for instance 
% if a Laplace transform is (abstractly) employed to analyze
% a numerical scheme for the time-dependent Maxwell's equations as in
% \cite{Chaumont:23_apostwave,CoJRT:01}, or if a controllability method
% is ued to solve \eqref{eq_maxwell_strong} with a time-dependent discretization
% scheme as in \cite{chaumontfrelet_grote_lanteri_tang_2022a,peng_appelo_2022a}.
DG methods exist in many flavors. One popular approach for the Poisson model
problem is the interior penalty dG method, which hinges on a consistency term
involving the mean-value of the normal flux at the mesh faces, possibly a symmetry term,
and a stabilization term penalizing the jumps across the mesh interfaces and the values
at the mesh boundary faces (see, e.g., \cite{ArBCM:01,DiPEr:12} and the references therein).
The expression of the consistency term involving the mean-value of the normal
flux is convenient for efficient implementation, but for the analysis, it is useful
to consider an (equivalent) reformulation involving jump liftings. The approach was
first considered in \cite{BasRe:97} and analyzed in \cite{BMMPR:00}. One important outcome is
the notion of discrete gradient obtained by adding the jump liftings to the broken (piecewise)
gradient. Indeed, the discrete gradient enjoys a compactness property that plays a central role
in various nonlinear problems \cite{BurEr:08,BufOr:09,DiPEr:10}. Another attractive feature
is that the discrete gradient admits a bounded extension to $H^1$, whereas 
the standard consistency term can only be extended to $H^{1+s}$ with $s>\frac12$. 
It is also possible to define bounded extensions
of the consistency term to $H^{1+s}$ for $s>0$ arbitrarily small by proceeding as in
\cite{ErnGu:22}.

In the context of the time-harmonic Maxwell's equations, interior penalty
dG methods were devised and analyzed in \cite{PeScM:02,HoPSS:05}, and the notion
of discrete curl, obtained by adding the liftings of the tangential jumps 
% (and boundary values)
to the broken curl, has been considered in the method formulation 
and analysis. However, the combined use of discrete curls (allowing for minimal
regularity requirements) with Schatz's duality argument seems to be lacking in 
the literature, contrary to the case of the Helmholtz equation where such a result 
has been recently achieved in~\cite{Chaumont:23_ipdghelmholtz}. Our \rev{first} main contribution
is to fill this gap. Indeed, we show that an \emph{asymptotically optimal} error 
estimate holds true with an augmented energy norm including a nonconformity measure.
Asymptotic optimality means that the ratio between the approximation error
and the best-approximation error tends to one as the mesh size $h$ is sent to zero.
\rev{An important aspect in our analysis is that the (frequency dependent) quantities controlling the smallness of the 
mesh size are essentially those already appearing in the conforming setting.}
Our second main contribution is to establish asymptotic optimality
under minimal regularity assumptions\rev{. Specifically, we assume that 
the source term sits in $\Ldeuxd$} 
(rather than in the dual space $\Hrotz'$) with no further assumption on $\DIV\bJ$
\rev{and that the material coefficients are bounded from above and from below away from zero,
with no further regularity assumption on the exact solution.
The third main} contribution of the paper 
is an a posteriori error analysis \rev{in the present indefinite setting
and using, for the first time, a duality argument. We establish}
global upper bounds and local lower bounds
on the error, where the constants are independent of the frequency, again in the limit 
as the mesh is refined. \rev{An important insight is, once again, that the behavior of the
constants is the same as for a conforming approximation.}
The a posteriori error analysis is of residual-type and requires
to tighten slightly the assumption on the source term so that $\DIV \bJ \in \Ldeux$,
\rev{and we assume piecewise constant material properties.}

Let us put our results in perspective with the literature. Concerning 
the a priori error analysis, a quasi-optimal (but not asymptotically optimal)
error estimate under minimal regularity is derived in~\cite{LGSvdV:23}, but for
a different interior penalty dG method, where a Lagrange multiplier related to
the divergence constraint is introduced together with the corresponding stabilization
term. Moreover, asymptotically optimal error estimates for 
the time-harmonic Maxwell's equations approximated using conforming edge elements
have been derived quite recently in \cite{melenk_sauter_2023a} and
in \cite{ThCFE:23}. The analysis in \cite{melenk_sauter_2023a} considers impedance
boundary conditions (allowing for an explicit frequency analysis), but requires the domain 
boundary to be smooth and connected. Instead, the analysis in \cite{ThCFE:23}
considers Dirichlet boundary conditions
and allows for general domains, material coefficients, and right-hand side.
The present analysis leverages the ideas developed in \cite{ThCFE:23},
but needs to address two additional, nontrivial difficulties: (i) the lack of
strong consistency under minimal regularity, leading to the appearance of new terms
in the analysis related to the consistency defect; (ii) the nonconforming nature
of the approximation, which calls for a careful handling of the stabilization.
In particular, we notice that we allow for a rather general stabilization bilinear form
and provide explicit design assumptions for the analysis to hold true.

Concerning the a posteriori error analysis, we leverage the ideas
proposed in \cite{chaumontfrelet_vega_2022a} for the conforming edge
finite element approximation of the time-harmonic Maxwell's equations. 
Here, the novelty is twofold. First, we additionally deal with the consistency defect
and the presence of stabilization in the discontinuous Galerkin setting,
by extending ideas introduced in \cite{Chaumont:23_ipdghelmholtz} for the Helmholtz equation.
Moreover, we tighten some arguments from \cite{chaumontfrelet_vega_2022a}
in the proof of the error upper bound so that the involved constants only depend on the
shape-regularity parameter of the mesh, whereas in \cite{chaumontfrelet_vega_2022a}
some constants are frequency-dependent in the low-frequency regime. Specifically,
instead of invoking the regular decomposition results from
\cite[Theorem 2.1]{hiptmair_pechstein_2019} as in \cite{chaumontfrelet_vega_2022a},
we make use of Galerkin orthogonality on conforming test functions to invoke the regular
decomposition results from \cite[Theorem 1]{Schoberl:07}.
\rev{Finally, we observe that the general form of the error 
indicators is the same as the one derived in \cite{HoPeS:07} 
in the positive definite setting.}

The paper is organized as follows. In Section~\ref{sec:continuous}, we briefly
present the continuous setting, and in Section~\ref{sec:disc_setting}, we 
do the same for the discrete setting. \rev{In particular, we
introduce various (nondimensional) approximation and divergence-conformity factors 
to be used in the analysis. These factors are important to support our claim
that the smallness condition on the mesh size made in the error analysis essentially
behaves (in terms of frequency) as the corresponding condition for the conforming
approximation.} In Section~\ref{sec:dG}, we \rev{introduce the dG approximation
in a rather general setting and show that the setting covers, in particular, the well-known
interior penalty approach.} Moreover, we establish \rev{in Lemma~\ref{lem:weak_cons}}
a key estimate on the weak consistency of the dG approximation. 
In Section~\ref{sec:a_priori}, we deal with the a priori error analysis and
inf-sup stability. 
The main results in this section are Theorem~\ref{th:est_err} and
Theorem~\ref{th:inf_sup}. In Section~\ref{sec:a_post}, we perform the 
a posteriori residual-based error analysis. The main results in this 
section are Theorem~\ref{th:reliability} and Theorem~\ref{th:efficiency}.
Finally, in Section~\ref{sec:bnd_app_fac}, we establish bounds on the 
approximation and divergence conformity factors. These bounds prove that these factors
tend to zero with the mesh size.

\section{Continuous setting}
\label{sec:continuous}

In this section, we briefly recall the functional setting for the time-harmonic
Maxwell's equations and formulate the model problem.

\subsection{Functional spaces}

We use standard notation for Lebesgue and Sobolev spaces. 
To alleviate the notation,
the inner product and associated norm in the spaces $\Ldeux$ and $\Ldeuxd$
are denoted by $(\SCAL,\SCAL)$ and $\|\SCAL\|$, respectively. The 
material properties $\eps$ and $\bnu\eqq \bmu^{-1}$ are measurable functions
that take symmetric positive-definite values in
$\Dom$ with eigenvalues uniformly bounded from above and from below away from zero.
It is convenient to introduce  
the inner product and associated norm weighted by either $\eps$ or $\bnu$,
leading to the notation $(\SCAL,\SCAL)_\eps$, $\|\SCAL\|_\eps$, $(\SCAL,\SCAL)_{\bnu}$
and $\|\SCAL\|_{\bnu}$. Whenever no confusion can arise, we use
the symbol $^\perp$ to denote orthogonality with respect to the inner product
$(\SCAL,\SCAL)_\eps$. Moreover, all the projection operators denoted using the 
symbol $\bPi$ are meant to be
orthogonal with respect to this inner product; we say that the projections are 
$\bL^2_\eps$-orthogonal.

We consider the Hilbert Sobolev spaces
\begin{subequations} \label{eq:Hrot_spaces} \begin{align}
\Hrot & \eqq \{\bv \in \Ldeuxd  \st \ROT\bv\in \Ldeuxd\},\\
\Hrotrotz &\eqq \{\bv \in \Hrot \st \ROT\bv=\bzero\},   \\
\Hrotz &\eqq \{\bv \in \Hrot \st \gamma\upc_\front(\bv)=\bzero\}, \\
\Hrotzrotz &\eqq \{\bv \in \Hrotz \st \ROTZ\bv=\bzero\},          
\end{align} \end{subequations}
where the tangential trace operator
$\gamma\upc_{\front}:\Hrot\rightarrow \bH^{-\frac12}(\front)$ is
the extension by density of the tangent trace operator such
that $\gamma\upc_\front(\bv)=\bv|_{\front}\CROSS \bn_\Dom$ for smooth fields.
The subscript ${}_0$ indicates the curl operator acting on fields
respecting homogeneous Dirichlet conditions. 
Notice that $\ROT$ and $\ROTZ$ are adjoint to each other, i.e.,
$(\ROTZ\bv,\bw)=(\bv,\ROT\bw)$ for all
$(\bv,\bw)\in \Hrotz\times\Hrot$. 
We equip the space $\Hrot$ and its subspaces defined in~\eqref{eq:Hrot_spaces} 
with the following (dimensionally consistent) energy norm:
\begin{equation} \label{eq:energy_norm}
\tnorm{\bv}^2 \eqq \omega^2\|\bv\|_{\eps}^2 + \|\ROT \bv\|_{\bnu}^2.
\end{equation}

We consider the subspace
\begin{equation} 
\bX\upc_0\eqq \Hrotz\cap \Hrotzrotz^\perp, 
\end{equation}
and we introduce the $\bL^2_\eps$-orthogonal projection
\begin{equation}
\bPi\upc_0: \Ldeuxd \to \Hrotzrotz.
\end{equation}
Since $\GRAD\Hunz \subset \Hrotzrotz$, any field $\bxi \in \bX\upc_0$ 
is such that $\DIV(\eps\bxi)=0$ in $\Dom$. Hence,
$\bX\upc_0$ compactly embeds into $\Ldeuxd$ \cite{Weber:80}.

\begin{remark}[Topology of $\Dom$]
We have $\Hrotzrotz^\perp\subset \{\bv\in \Ldeuxd, \; \DIV(\eps\bv)=0\}$ 
with equality if only if $\front$ is connected (see, e.g., \cite{AmBDG:98}).
\end{remark}

\subsection{Model problem}

Given a positive real number $\omega>0$ and a source term 
$\bJ \in (\Hrotz)'$ (the topological dual space of $\Hrotz$),
the model problem amounts to finding $\bE\in\Hrotz$ such that
\begin{equation} \label{eq:weak}
b(\bE,\bw) = \langle\bJ,\bw\rangle \qquad \forall \bw \in \Hrotz,
\end{equation}
with the bilinear form defined on $\Hrotz\times\Hrotz$ such that
\begin{equation}
b(\bv,\bw) \eqq -\omega^2(\bv,\bw)_\eps + (\ROTZ\bv,\ROTZ\bw)_{\bnu},
\end{equation} 
and where the brackets on the right-hand side of~\eqref{eq:weak} denote the duality
product between $(\Hrotz)'$ and $\Hrotz$.
In what follows, we assume that $\omega^2$ is not an eigenvalue of the 
$\eps^{-1}\ROT(\bnu\ROTZ {\cdot})$ operator in $\Dom$. As a result,
the model problem~\eqref{eq:weak} is well-posed.
We observe that the bilinear form $b$ satisfies $|b(\bv,\bw)|\le \tnorm{\bv}\tnorm{\bw}$.
The following inf-sup stability result is established in \cite[Lemma 2]{ThCFE:23}.

\begin{lemma}[Inf-sup stability] \label{lem:infsup}
The following holds:
\begin{equation} \label{eq:infsup_exact}
\frac{1}{1+2\bst} \le \inf_{\substack{\bv \in \Hrotz\\ \tnorm{\bv}=1}} \sup_{\substack{\bw\in \Hrotz \\ \tnorm{\bw}=1}} |b(\bv,\bw)| \le \frac{1}{\bst},
\end{equation}
with the (nondimensional) stability constant
\begin{equation} \label{eq:def_bst}
\bst \eqq
\sup_{\substack{\bg \in \Hrotzrotz^\perp \\ \|\bg\|_\eps = 1}} \omega \tnorm{\bv_{\bg}}.
\end{equation}
Here, for all $\bg \in \Ldeuxd$, $\bv_{\bg}\in \Hrotz$ denotes the unique
solution to~\eqref{eq:weak} with right-hand side $(\bg,\bw)_\eps$,
i.e., $b(\bv_\bg,\bw)=(\bg,\bw)_\eps$ for all $\bw\in \Hrotz$.
\end{lemma}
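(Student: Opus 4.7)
The plan is to prove the two sides of~\eqref{eq:infsup_exact} separately via a Schatz-type duality argument tailored to the indefiniteness of $b$. For the upper bound $\le 1/\bst$, I would simply realize the supremum in the definition~\eqref{eq:def_bst} of $\bst$: for every $\varepsilon>0$, pick $\bg\in\Hrotzrotz^\perp$ with $\|\bg\|_\eps=1$ and $\omega\tnorm{\bv_\bg}>\bst-\varepsilon$, and test with $\bv\eqq\bv_\bg/\tnorm{\bv_\bg}$. Using the identity $b(\bv_\bg,\bw)=(\bg,\bw)_\eps$ together with Cauchy--Schwarz and $\omega\|\bw\|_\eps\le\tnorm{\bw}$ gives $|b(\bv,\bw)|\le\tnorm{\bw}/(\omega\tnorm{\bv_\bg})$, so the inf-sup ratio at this candidate $\bv$ is at most $1/(\bst-\varepsilon)$; sending $\varepsilon\to 0$ yields the claim.

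For the lower bound $\ge 1/(1+2\bst)$, given $\bv\in\Hrotz$, I would first split $\bv=\bv^0+\bv^\perp$ with $\bv^0\eqq\bPi\upc_0\bv\in\Hrotzrotz$ and $\bv^\perp\in\bX\upc_0\subset\Hrotzrotz^\perp$, so that $\ROTZ\bv^0=\bzero$ and $\tnorm{\bv}^2=\omega^2\|\bv^0\|_\eps^2+\omega^2\|\bv^\perp\|_\eps^2+\|\ROTZ\bv^\perp\|_\bnu^2$. The key step is then a duality correction: since $\bv^\perp\in\Hrotzrotz^\perp$ and $b$ is symmetric, I would define $\bz\eqq\bv_{2\omega^2\bv^\perp}\in\Hrotz$, i.e., the unique field such that $b(\bz,\bxi)=2\omega^2(\bv^\perp,\bxi)_\eps$ for all $\bxi\in\Hrotz$, and test with $\bw\eqq -\bv^0+\bv^\perp+\bz$. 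A short computation using only $(\bv^0,\bv^\perp)_\eps=0$, $\ROTZ\bv^0=\bzero$, and $b(\bv,\bz)=b(\bz,\bv)=2\omega^2\|\bv^\perp\|_\eps^2$ then delivers $b(\bv,\bw)=\tnorm{\bv}^2$; the three summands in $\bw$ are crafted so that the gradient mass term flips sign, the curl contribution is retained, and the remaining indefiniteness on the divergence-free part is absorbed by the duality iterate $\bz$.

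Bounding $\tnorm{\bw}$ then requires, via the triangle inequality and the identity $\tnorm{-\bv^0+\bv^\perp}=\tnorm{\bv}$, only the estimate $\tnorm{\bz}\le 2\bst\tnorm{\bv}$, which follows directly from~\eqref{eq:def_bst} applied to $2\omega^2\bv^\perp\in\Hrotzrotz^\perp$ combined with $\omega\|\bv^\perp\|_\eps\le\tnorm{\bv}$. Putting things together yields $\tnorm{\bw}\le(1+2\bst)\tnorm{\bv}$ and hence the desired lower bound. The main obstacle I anticipate is not computational but the design of the test function: it must simultaneously recover all three positive terms of $\tnorm{\bv}^2$ with the sharp factor $2$ in front of $\bst$. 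The symmetry of $b$ is crucial here, as it lets me reuse the very inverse operator that enters the definition of $\bst$, and the non-resonance hypothesis is precisely what makes this inverse well-defined.
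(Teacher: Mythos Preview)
Your proposal is correct and follows essentially the same route as the paper. The paper does not reproduce the proof of Lemma~\ref{lem:infsup} here (it cites \cite[Lemma~2]{ThCFE:23}), but the construction you describe is exactly the one underlying the discrete analogue in Theorem~\ref{th:inf_sup}: decompose $\bv$ along $\Hrotzrotz$ and its $\eps$-orthogonal complement, introduce the adjoint corrector solving $b(\SCAL,\bxi_0)=\omega^2(\SCAL,\bv^\perp)_\eps$, and test with $\bw=\bv^\perp+2\bxi_0-\bv^0$, which is precisely your $-\bv^0+\bv^\perp+\bz$ up to the trivial rescaling $\bz=2\bxi_0$.
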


\section{\rev{Discrete setting}}
\label{sec:disc_setting}

In this section, we introduce the discrete setting and \rev{various
important tools, such as the discrete curl operator, two approximation norms and
a nonconformity measure, and some important
approximation and divergence conformity factors. All these tools are of broader
interest than the dG method presented in the next section; they can indeed be 
applied to analyze other nonconforming approximation methods.}

\subsection{\rev{Mesh and polynomial spaces}}

Let $\calT_h$ be an affine simplicial mesh covering $\Dom$ exactly.
A generic mesh cell is denoted $K$, its diameter $h_K$ and its outward unit normal
$\bn_K$. We define the piecewise constant functions $\th$ and $\tnu$ such that 
\begin{equation} \label{eq:def_th_tnu}
\th|_K\eqq h_K, \qquad \tnu|_K \eqq \min_{\substack{\bu \in \mathbb R^d \\ |\bu| = 1}}
\bnu|_K \bu \cdot \bu,
\qquad \forall K\in \calT_h.  
\end{equation}

We write $\calFh$ for the set of mesh faces, $\calFhi$
for the subset of mesh interfaces (shared by two distinct mesh
cells, $K_l$, $K_r$), and $\calFhb$ for the subset of mesh boundary
faces (shared by one mesh cell, $K_l$, and the boundary, $\front$). 
Every mesh interface $F\in\calFhi$ is oriented
by the unit normal, $\bn_F$, pointing from $K_l$ to $K_r$
(the orientation is arbitrary, but fixed).
Every boundary face $F\in\calFhb$ is oriented by the unit normal $\bn_F\eqq\bn_\Dom|_F$. 
For all $K\in \calT_h$, $\calFK$ is the collection of the mesh faces composing $\partial K$.

Let $k\ge1$ be the polynomial degree. Let $\polP_{k,d}$ be the space
composed of $d$-variate polynomials of total degree at most $k$ and set
$\bpolP_{k,d}\eqq [\polP_{k,d}]^d$. The dG approximation hinges on the
following broken polynomial space:
\begin{equation}
\Pkb \eqq \bset \bv_h\in
\bL^2(\Dom) \st \bv_h|_K\in \bpolP_{k,d},\, \forall K\in\calT_h\eset. \label{eq:def_Vhb}
\end{equation}
Moreover, the error analysis makes use of the following subspaces:
\begin{subequations} \begin{align}
\Pkbrot & \eqq \Pkb \cap \Hrot,\\
\Pkbrotz & \eqq \Pkb \cap \Hrotz,\\
\Pkbrotzrotz & \eqq \Pkb \cap \Hrotzrotz.
\end{align} \end{subequations}%
The superscript ${}\upc$ in the above subspaces is meant to remind us that
all these subspaces are $\Hrot$-conforming. The $\bL^2_{\eps}$-orthogonal projection 
\begin{equation}
\bPi\upc_{h0} : \Ldeuxd \to \Pkbrotzrotz,
\end{equation}
plays a key role in what follows. In particular, we introduce the subspace
\begin{equation} \label{eq:disc_involution}
\bX\upb_{h}\eqq \Pkb \cap \Pkbrotzrotz^\perp,
\end{equation}
which is composed of fields $\bv_h\in \Pkb$ such that $\bPi\upc_{h0}(\bv_h)=\bzero$.

\subsection{Jumps and discrete curl operator}
For all $K\in\calT_h$, all $F\in\calFK$, 
and all $\bv_h\in\Pkb$, we define 
the local trace operators such that 
$\gamma\upg_{K,F}(\bv_h)(\bx) \eqq \bv_h|_K(\bx)$, 
$\gamma\upc_{K,F}(\bv_h)(\bx)\eqq \bv_h|_{K}(\bx)\CROSS\bn_F$, for a.e.~$\bx\in F$.
Then, for all $F\in\calFhi$ and $\mathrm{x}\in\{\mathrm{g},\mathrm{c}\}$,  
we define the jump and average operators such that 
\begin{equation} 
\jump{\bv_h}\upx_F\eqq \gamma\upx_{K_l,F}(\bv_h)-\gamma\upx_{K_r,F}(\bv_h), \quad
\avg{\bv_h}\upx_F\eqq \frac12\big(\gamma\upx_{K_l,F}(\bv_h)+\gamma\upx_{K_r,F}(\bv_h)\big).
\end{equation}
We also set $\jump{\bv_h}\upx_F \eqq \avg{\bv_h}\upx_F \eqq \gamma\upx_{K_l,F}(\bv_h)$ 
for all $F\in\calFhb$.

For every field $\bv_h\in \Pkb$, $\ROTh\bv_h$
denotes the broken curl of $\bv_h$ (evaluated cellwise). Let
$\ell\ge k-1\ge0$. 
We define the discrete curl operator
$\bChz: \bP\upb_k(\calT_h)\to \bP\upb_{\ell}(\calT_h)$ such that, for all $\bv_h\in \Pkb$,
\begin{equation}
\bChz(\bv_h) \eqq \ROTh \bv_h + \bLhz(\bv_h),
\end{equation}
where the jump lifting operator $\bLhz(\bv_h) \in \bP\upb_{\ell}(\calT_h)$
is defined by requiring that
\begin{equation} \label{eq:def_Lh}
(\bLhz(\bv_h),\bphi_h) \eqq \sum_{F\in\calFh} (\jump{\bv_h}\upc_F,\avg{\bphi_h}\upg_F)_{\bL^2(F)}
\end{equation}
for all $\bphi_h\in \bP\upb_{\ell}(\calT_h)$.
Taking the polynomial degree $\ell$ larger than $k-1$ is useful to improve the consistency
property of the discrete curl operator; see Lemma~\ref{lem:weak_cons} below.

It is convenient to introduce the infinite-dimensional space
\begin{equation} \label{eq:def_bVsh}
\bVsh \eqq \Hrotz + \bP\upb_k(\calT_h), 
\end{equation}
where the error $(\bE-\bE_h)$ lives.
Although the sum in~\eqref{eq:def_bVsh} is not direct, any field $\bv_h\in \Hrotz \cap \Pkb$ satisfies $\jump{\bv_h}\upc_F=\bzero$ for all $F\in\calFh$, as well as $\bChz(\bv_h)=\ROTZ \bv_h$. It is therefore legitimate to extend the curl and jump operators to $\bVsh$ by setting, for all $\bv=\tilde \bv + \bv_h\in \bVsh$ with $\tilde\bv \in \Hrotz$ and $\bv_h\in \Pkb$,
\begin{equation} \label{eq:ext_Ch_j}
\bChz(\bv)\eqq\ROTZ\tilde\bv+\ROTh\bv_h+\bLhz(\bv_h), \qquad
\jump{\bv}\upc_F\eqq\jump{\bv_h}\upc_F.
\end{equation}

\subsection{\rev{Approximation norms and nonconformity measure}}

We consider the following two norms:
\begin{subequations} \begin{alignat}{2}
\tnormE{\bv_h}^2 & \eqq \|\tnu^{\frac12} \th^{-1}\bv_h\|^2 + \|\bChz(\bv_h)\|_\bnu^2 &\qquad &\forall \bv_h\in \Pkb, \label{eq:tnormE} \\
\tnormH{\bv}^2 & \eqq \|\bv\|_{\bnu^{-1}}^2 + \|\tnu^{-\frac12} \th \ROT\bv\|^2
&\qquad&\forall \bv \in \Hrot. \label{eq:tnormadj}
\end{alignat} \end{subequations}
(Recall that $\th$ and $\tnu$ are defined in~\eqref{eq:def_th_tnu}.) 
We introduce the following nonconformity measure:
\begin{equation} \label{eq:def_JMP}
\snormnc{\bv} \eqq \min_{\bv_h\upc \in \Pkbrotz} \tnormE{\bv_h-\bv_h\upc} \qquad \forall \bv:=\tilde\bv+\bv_h\in\bVsh. 
\end{equation}
Notice that the definition~\eqref{eq:def_JMP} is independent of the decomposition $\bv:=\tilde\bv+\bv_h$.
\rev{The $\tnormE{\SCAL}$-norm is used to measure the nonconformity in~\eqref{eq:def_JMP},
whereas the $\tnormH{\SCAL}$-norm is used in the next section to estimate the 
approximability properties of some dual solution.}

\subsection{\rev{Approximation and divergence conformity factors}}
\label{sec:def_fac}

Here, we introduce three factors to be used in the error analysis. 
We prove in Section~\ref{sec:bnd_app_fac} that these factors tend to zero
(possibly with a certain rate) as the mesh size tends to zero.

For all $\btheta \in \Hrotzrotz^\perp$, we consider the adjoint problem
consisting of finding $\bzeta_\btheta \in \Hrotz$ such that
\begin{equation} \label{eq:adjoint}
b(\bw,\bzeta_\btheta) = (\bw,\btheta)_\eps \qquad \forall \bw \in \Hrotz.
\end{equation}
Taking any test function $\bw \in \Hrotzrotz\subset \Hrotz$ shows that 
\begin{equation}
\omega^2(\bw,\bzeta_\btheta)_\eps=b(\bw,\bzeta_\btheta)=(\bw,\btheta)_\eps=0,
\end{equation} 
where the first equality follows from $\ROTZ\bw=\bzero$, the second from the
definition of the adjoint solution, and the third from the assumption 
$\btheta \in \Hrotzrotz^\perp$.
Since $\bw$ is arbitrary in $\Hrotzrotz$, this proves that
$\bzeta_\btheta \in \Hrotzrotz^\perp$. Thus, $\bzeta_\btheta \in \bX\upc_0$.
We introduce the (nondimensional) approximation factors%
\begin{subequations} \begin{align}
\gp&\eqq \sup_{\substack{\btheta \in \Hrotzrotz^\perp\\ \|\btheta\|_\eps=1}} \min_{\bv_h\upc\in \Pkbrotz}
\omega \tnorm{\bzeta_\btheta-\bv_h\upc}, \label{eq:def_gp}\\
\gd&\eqq\sup_{\substack{\btheta \in \Hrotzrotz^\perp\\ \|\btheta\|_\eps=1}} \min_{\bPhi\upc_h\in \Plbrot}
\omega \tnormH{\bnu \ROTZ\bzeta_\btheta-\bPhi\upc_h}.\label{eq:def_gd}
\end{align} \end{subequations}
\rev{The approximation factor $\gp$ uses the triple norm $\tnorm{\SCAL}$ defined in~\eqref{eq:energy_norm}, whereas $\gd$ uses the norm $\tnormH{\SCAL}$ defined in~\eqref{eq:tnormadj}}.
Finally, we introduce the (nondimensional) divergence conformity factor
\begin{equation}
\label{eq_gamma_bX_DG}
\gdiv \eqq \sup_{\substack{
\bv_h \in \bX\upb_h
\\
\|\bChz(\bv_h)\|_\bnu^2 + \snormnc{\bv_h}^2} = 1}
\omega \|\bPi\upc_0(\bv_h)\|_\eps.
\end{equation}
Loosely speaking, $\gdiv$ measures how much discretely divergence-free fields
depart from being exactly divergence-free.

\section{\rev{Discontinuous Galerkin approximation}}
\label{sec:dG}

In this section, we formulate the
dG approximation of the model problem~\eqref{eq:weak} \rev{in a rather general
setting and show that the interior penalty dG method fits the proposed framework.
We then examine the Galerkin orthogonality and weak consistency
property of the proposed dG method. We assume from now on that $\bJ\in\Ldeuxd$;
notice though that we do not make any further assumption on $\DIV\bJ$
for the a priori error analysis. Moreover, the sole assumption on the material properties
is uniform boundedness from above and from below away from zero.
The main novel result in this section is Lemma~\ref{lem:weak_cons} which leads to
a weak consistency property of the dG method without requiring any additional regularity
property on the exact solution.}

\subsection{\rev{Stabilization and extended bilinear form}}

We consider a stabilization bilinear form $s_\sharp$ 
defined on $\bVsh\times\bVsh$ for which
we make the following assumptions: 
\begin{subequations} \label{eq:ass_sh} \begin{alignat}{2}
&\textup{(i)}&\;&\text{$s_\sharp$ is symmetric positive semidefinite}, \label{eq:sh_sym} \\
&\textup{(ii)}&\;&s_\sharp(\bv,\SCAL)=s_\sharp(\SCAL,\bv)=0 \quad \forall\bv\in\Hrotz. \label{eq:ker_sh}
\end{alignat} \end{subequations}
\rev{(Notice that the first equality in~\eqref{eq:ker_sh} follows from~\eqref{eq:sh_sym}.)
We also assume that
\begin{equation}
\exists \rho>0 \quad\text{s.t.}\quad \rho \snormnc{\bv_h}
\le
s_\sharp(\bv_h,\bv_h)^{\frac12}
% \tsc{\bv_h},
\quad \forall \bv_h \in \bV_h, \label{eq:def_rho}
\end{equation}
where the constant $\rho$ is independent of the mesh size and the frequency.
The value of $\rho$ can depend on the mesh shape-regularity and the polynomial degree.
This assumption is needed only for the a priori error analysis, but not for the 
a posteriori analysis. Furthermore, we notice that although the converse bound
$\tilde \rho \tsc{\bv_h} \le \snormnc{\bv_h}$ for some $\tilde\rho>0$ is not
required anywhere in the analysis, it is reasonably to assume it 
to avoid ill-conditioned linear systems. Finally, we notice that, as
usual in dG methods, the stabilization bilinear form $s_\sharp$ is not bounded 
in the $\Hrot$-norm uniformly with respect to the mesh size. Its role is
to deal with the nonconformity of the discretization when handling the curl operator
by enforcing some penalty on the tangential jumps of discrete fields.}

We define the bilinear forms $\bs$ and $\bss$ on $\bVsh\times\bVsh$ such that 
\begin{subequations} \begin{align} 
\bs(\bv,\bw) &\eqq -\omega^2(\bv,\bw)_\eps + (\bChz(\bv),\bChz(\bw))_\bnu , \label{eq:def_bs}\\
\bss(\bv,\bw) &\eqq \bs(\bv,\bw) + s_\sharp(\bv,\bw).\label{eq:def_bss}
\end{align} \end{subequations}
The bilinear form $\bss$ is used to define the discrete problem and perform the 
a priori error analysis; the bilinear form $\bs$ is useful in the a posteriori
error analysis.
Owing to~\eqref{eq:ext_Ch_j} and~\eqref{eq:ker_sh}, we have the following (minimal)
consistency property:
\begin{equation} \label{eq:bh=b}
\bss(\bv,\bw)=\bs(\bv,\bw)=b(\bv,\bw) \qquad \forall \bv,\bw \in \Hrotz.
\end{equation} 
We extend the $\tnorm{\SCAL}$-norm defined in~\eqref{eq:energy_norm} to
$\bVsh$ by setting, for all $\bv\in\bVsh$,
\begin{subequations} \label{eq:def_tnorm} \begin{align}
\tnorms{\bv}^2 & \eqq \omega^2 \|\bv\|_\eps^2 + \|\bChz(\bv)\|_\bnu^2, \\
\tnormss{\bv}^2 & \eqq \tnorms{\bv}^2 + \tsc{\bv}^2, \qquad \tsc{\bv}^2\eqq s_\sharp(\bv,\bv).
\end{align} \end{subequations}
(The definition of $\tsc{\bv}$ is legitimate owing to~\eqref{eq:sh_sym}.)
This leads to the following boundedness properties on the bilinear form $\bss$:
\begin{subequations} \begin{alignat}{2} 
|\bss(\bv,\bw)| &\le \tnormss{\bv}\, \tnormss{\bw}
&\qquad &\forall (\bv,\bw)\in \bVsh\times\bVsh, \label{eq:bnd_bhs}
\\
|\bss(\bv,\bw)| &\le \tnorms{\bv}\, \tnorm{\bw}
&\qquad &\forall (\bv,\bw)\in \bVsh\times\Hrotz. \label{eq:bnd_bhs_Hrot}
\end{alignat} \end{subequations}%

\subsection{\rev{Discrete problem}}
The discrete problem reads as follows: Find $\bE_h\in \Pkb$ such that
\begin{equation} \label{eq:disc_pb}
\bss(\bE_h,\bw_h)=(\bJ,\bw_h)_{\bL^2(\Dom)} \qquad \forall \bw_h\in \Pkb.
\end{equation}
Notice that we use here the assumption that $\bJ\in\Ldeuxd$.

One simple, yet important, observation is that Galerkin orthogonality holds 
true whenever the discrete test functions are required to be $\Hrotz$-conforming.

\begin{lemma}[Galerkin orthogonality \rev{on conforming test functions}] \label{lem:GO}
If $\bE_h$ solves \eqref{eq:disc_pb}, the following holds true:
\begin{equation} \label{eq:GO}
\bss(\bE-\bE_h,\bv_h\upc)=0 \qquad \forall \bv_h\upc \in \Pkbrotz.
\end{equation}
In particular, we have
\begin{equation} \label{eq:GO_perp}
\bPi\upc_{h0}(\bE-\bE_h)=\bzero.
\end{equation}
\end{lemma}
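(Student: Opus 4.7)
My plan splits along the two assertions of the lemma. For~\eqref{eq:GO}, the strategy is to test both the continuous weak formulation~\eqref{eq:weak} and the discrete problem~\eqref{eq:disc_pb} against the same conforming field $\bv_h\upc\in\Pkbrotz$, and then to reconcile the two bilinear forms using the minimal consistency identity~\eqref{eq:bh=b}. For~\eqref{eq:GO_perp}, I will specialize~\eqref{eq:GO} to the subspace $\Pkbrotzrotz\subset\Pkbrotz$ and exploit that every term of $\bss$ except the mass term vanishes on a curl-free, tangentially continuous test function.

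Concretely, for the first assertion, I pick any $\bv_h\upc\in\Pkbrotz$. Since $\Pkbrotz\subset\Pkb$, testing~\eqref{eq:disc_pb} yields $\bss(\bE_h,\bv_h\upc)=(\bJ,\bv_h\upc)$. On the other hand, $\bv_h\upc\in\Hrotz$, so~\eqref{eq:weak} gives $b(\bE,\bv_h\upc)=\langle\bJ,\bv_h\upc\rangle$; the standing hypothesis $\bJ\in\Ldeuxd$ lets me identify this duality pairing with the $\bL^2$ inner product $(\bJ,\bv_h\upc)$. Because both $\bE$ and $\bv_h\upc$ lie in $\Hrotz$, the consistency identity~\eqref{eq:bh=b} gives $\bss(\bE,\bv_h\upc)=b(\bE,\bv_h\upc)$, and subtracting the two identities yields~\eqref{eq:GO}.

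For the second assertion, I use $\Pkbrotzrotz\subset\Pkbrotz$ so that~\eqref{eq:GO} applies to any $\bw_h\in\Pkbrotzrotz$. For such a $\bw_h$ one has $\ROTZ\bw_h=\bzero$, and $\bw_h\in\Hrotz\cap\Pkb$ has no tangential jumps across mesh faces, so $\bChz(\bw_h)=\bzero$ via the extension~\eqref{eq:ext_Ch_j}. Moreover $\bw_h\in\Hrotz$ forces $s_\sharp(\SCAL,\bw_h)\equiv 0$ by~\eqref{eq:ker_sh}. The only surviving contribution in $\bss(\bE-\bE_h,\bw_h)=0$ is therefore the mass term, which yields $\omega^2(\bE-\bE_h,\bw_h)_\eps=0$. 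Since $\omega>0$ and $\bw_h$ is arbitrary in $\Pkbrotzrotz$, this is precisely the $\bL^2_\eps$-orthogonality characterizing $\bPi\upc_{h0}(\bE-\bE_h)=\bzero$.

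I do not anticipate any genuine obstacle: the argument merely unpacks definitions together with the structural assumptions~\eqref{eq:ker_sh} and~\eqref{eq:bh=b}. The one subtlety worth flagging is the identification of the duality bracket $\langle\bJ,\SCAL\rangle$ with the $\bL^2$ inner product on conforming test functions, which relies crucially on the hypothesis $\bJ\in\Ldeuxd$ stated at the beginning of Section~\ref{sec:dG}; without it, the comparison between the two formulations would not be immediate.
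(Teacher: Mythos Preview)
Your proof is correct and follows essentially the same approach as the paper: you use the minimal consistency identity~\eqref{eq:bh=b} to show $\bss(\bE,\bv_h\upc)=b(\bE,\bv_h\upc)=(\bJ,\bv_h\upc)$ and subtract the discrete equation, then specialize to curl-free conforming test functions to deduce~\eqref{eq:GO_perp}. The paper's proof is simply more terse, but the logic is identical.
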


\begin{proof}
The property~\eqref{eq:GO} follows from the definition of $\bss$ which implies that
$\bss(\bE,\bv_h\upc)=b(\bE,\bv_h\upc)=(\bJ,\bv_h\upc)$ for all $\bv_h\upc \in \Pkbrotz$.
Moreover, since $\Pkbrotzrotz\subset \Pkbrotz$, \eqref{eq:GO} implies
that $(\bE-\bE_h,\bw_h)_\eps=0$ for all $\bw_h \in \Pkbrotzrotz$, which proves~\eqref{eq:GO_perp}. 
\end{proof}

\subsection{Example: interior penalty discontinuous Galerkin method}

The classical interior penalty dG formulation for the model problem~\eqref{eq:weak}
is based upon the following discrete bilinear form \cite{PeScM:02}:
For all $\bv_h,\bw_h\in \Pkb$,
\begin{equation} \label{eq:IPDG} \begin{aligned}
b_h(\bv_h,\bw_h) := {}& -\omega^2(\bv_h,\bw_h)_\eps + (\ROTh \bv_h,\ROTh\bw_h)_\bnu 
+ \eta_* s_h(\bv_h,\bw_h) \\
& + \sum_{F\in\calFh} \big\{ (\avg{\bnu\ROTh\bv_h}\upg_F,\jump{\bw_h}\upc_F)_{\bL^2(F)} 
+ (\jump{\bv_h}\upc_F,\avg{\bnu\ROTh\bw_h}\upg_F)_{\bL^2(F)}\big\}, 
\end{aligned} \end{equation}
with the stabilization bilinear form
\begin{equation} \label{eq:stab_IPDG}
s_h(\bv_h,\bw_h) \eqq \sum_{F\in\calFh} \frac{\tnu_F}{h_F}
(\jump{\bv_h}\upc_F,\jump{\bw_h}\upc_F)_{\bL^2(F)},
\end{equation}
and the user-dependent parameter $\eta_*>0$ is to be taken large enough. 
In~\eqref{eq:stab_IPDG}, $h_F$ denotes the diameter of $F\in\calFh$ and $\tnu_F\eqq \max_{K\in\calT_F} \tnu_K$ with $\calT_F\eqq \{K\in\calT_h\,|\, F\in\calFK\}$. 
\rev{Notice that the extension of $s_h$ to $\bVsh\times\bVsh$ readily
follows from~\eqref{eq:ext_Ch_j}.}

The discrete bilinear form $b_h$ defined in~\eqref{eq:IPDG} can be extended to
$\bVsh\times\bVsh$ using the bilinear form $\bss$ defined in~\eqref{eq:def_bss}
provided the polynomial degree for the jump lifting satisfies $\ell\ge k-1\ge0$
and provided the material property $\bnu$ is piecewise constant on the mesh.
In this situation, $\bss$ is indeed an extension of $b_h$, i.e., 
$\bss|_{\Pkb\times \Pkb}=b_h$, provided the stabilization bilinear form $s_\sharp$ is 
defined as follows:
\begin{equation} \label{eq:def_s_sharp}
s_\sharp(\bv,\bw) \eqq \eta_* s_h(\bv,\bw) 
- (\bLhz(\bv),\bLhz(\bw))_\bnu\quad \forall (\bv,\bw)\in \bVsh\times\bVsh.
\end{equation}
(Notice that $s_\sharp$ is not an extension of $s_h$.) The bilinear form
$s_\sharp$ defined in~\eqref{eq:def_s_sharp} trivially satisfies~\eqref{eq:ker_sh} and is
symmetric. It is positive semidefinite, i.e., \eqref{eq:sh_sym} 
also holds true, if the factor $\eta_*$ is chosen large enough \cite{PerSc:03}. 
The minimal threshold classically depends 
on the mesh shape-regularity and the polynomial degree $\ell$.
\rev{Finally, it is possible to choose the parameter $\eta_*>0$ large enough so that
\eqref{eq:def_rho} holds true. To this purpose, one can, for instance, 
bound $\snormnc{\bv_h}$ by taking $\bv_h\upc\eqq \calI\upcav_{h0}(\bv_h)$ defined using 
the $\Hrotz$-conforming averaging operator analyzed in~\cite{ErnGu:17_quasi}.
We observe in passing that the value of the parameter $\eta_*$ is independent of
the frequency, as it is only related to the nonconformity in the discretization of the 
curl operator.}

\begin{remark}[Weighted averages]
Whenever the jumps of (the eigenvalues of) $\bnu$ are large across the mesh interfaces,
it can be useful to consider weighted $\bnu$-dependent averages to evaluate the 
last two terms on the right-hand side of~\eqref{eq:IPDG}. We refer the reader, e.g., to
\cite{ErStZ:09} for an example in the context of scalar diffusion problems. 
Such weighted averages can be handled in our framework by modifying the
definition of the lifting operator accordingly.
\end{remark}

\subsection{\rev{Weak consistency}}

We now consider the consistency error produced by the discrete curl operator when
tested against general fields. For all $\bPsi \in \Hrotz$ such that $\bnu \ROTZ \bPsi \in \Hrot$
and for all $\bv_h \in \Pkb$, we define the weak consistency error on the discrete curl as
\begin{equation} \label{eq:def_adj_error}
\deltawkc(\bv_h,\bPsi) \eqq (\bv_h,\ROT(\bnu\ROTZ\bPsi)) - (\bChz(\bv_h),\ROTZ\bPsi)_\bnu.
\end{equation}
\rev{The weak consistency error $\deltawkc$ allows us to measure the consistency defect
of the discrete primal problem~\eqref{eq:disc_pb} and the adjoint problem~\eqref{eq:adjoint}.
(Compare with Lemma~\ref{lem:GO} for the consistency
of the discrete primal problem restricted to conforming test functions).

\begin{lemma}[Weak consistency of primal and dual problems]
If $\bE_h$ solves the discrete primal problem~\eqref{eq:disc_pb}, the following holds true:
\begin{subequations} \begin{equation} \label{eq:GOb}
\bss(\bE-\bE_h,\bw_h)=- \deltawkc(\bw_h,\bE) \qquad \forall \bw_h \in \Pkb.
\end{equation}
If $\bzeta_\btheta \in \Hrotz$ solve the adjoint problem~\eqref{eq:adjoint}
with data $\btheta \in \Hrotzrotz^\perp$, the following holds true:
\begin{equation} \label{eq:GOd}
\bss(\bw_h,\bzeta_\btheta)-(\bw_h,\btheta)_\eps= - \deltawkc(\bw_h,\bzeta_\btheta)
\qquad \forall \bw_h \in \Pkb.
\end{equation} \end{subequations}
\end{lemma}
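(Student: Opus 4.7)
The plan is to prove both identities by the same two-step recipe: use the fact that $\bE$ and $\bzeta_\btheta$ both lie in $\Hrotz$ to reduce $\bss$ to a more tractable form, and then convert the right-hand side of the continuous equation back into its strong version, whose $L^2$-interpretation is available precisely because both $\bnu\ROTZ\bE$ and $\bnu\ROTZ\bzeta_\btheta$ turn out to sit in $\Hrot$. Subtracting then yields exactly the terms that assemble into $\deltawkc$.

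For \eqref{eq:GOb}, I would first use the discrete equation \eqref{eq:disc_pb} to write
\begin{equation*}
\bss(\bE-\bE_h,\bw_h) = \bss(\bE,\bw_h) - (\bJ,\bw_h).
\end{equation*}
Since $\bE\in\Hrotz$, the extension \eqref{eq:ext_Ch_j} gives $\bChz(\bE)=\ROTZ\bE$, and the kernel property \eqref{eq:ker_sh} yields $s_\sharp(\bE,\bw_h)=0$, so
\begin{equation*}
\bss(\bE,\bw_h) = -\omega^2(\bE,\bw_h)_\eps + (\ROTZ\bE,\bChz(\bw_h))_\bnu.
\end{equation*}
Next, from the strong form \eqref{eq_maxwell_strong} rewritten as $\ROT(\bnu\ROTZ\bE) = \bJ + \omega^2\eps\bE$, the assumption $\bJ\in\Ldeuxd$ forces $\bnu\ROTZ\bE\in\Hrot$; testing in $\Ldeuxd$ against $\bw_h$ yields
\begin{equation*}
(\bJ,\bw_h) = -\omega^2(\bE,\bw_h)_\eps + (\ROT(\bnu\ROTZ\bE),\bw_h).
\end{equation*}
Substituting and recognizing the definition \eqref{eq:def_adj_error} of $\deltawkc(\bw_h,\bE)$ gives the claimed identity.

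For \eqref{eq:GOd}, the argument is essentially identical with $(\bE,\bJ)$ replaced by $(\bzeta_\btheta,\eps\btheta)$. Using $\bzeta_\btheta\in\Hrotz$, we get $\bChz(\bzeta_\btheta)=\ROTZ\bzeta_\btheta$ and $s_\sharp(\bw_h,\bzeta_\btheta)=0$, so
\begin{equation*}
\bss(\bw_h,\bzeta_\btheta) = -\omega^2(\bw_h,\bzeta_\btheta)_\eps + (\bChz(\bw_h),\ROTZ\bzeta_\btheta)_\bnu.
\end{equation*}
The adjoint problem \eqref{eq:adjoint} has $L^2$ right-hand side $(\SCAL,\btheta)_\eps$, hence its strong form reads $-\omega^2\eps\bzeta_\btheta+\ROT(\bnu\ROTZ\bzeta_\btheta)=\eps\btheta$, which again ensures $\bnu\ROTZ\bzeta_\btheta\in\Hrot$. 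Testing against $\bw_h$ in $\Ldeuxd$ yields $(\bw_h,\btheta)_\eps = -\omega^2(\bw_h,\bzeta_\btheta)_\eps + (\bw_h,\ROT(\bnu\ROTZ\bzeta_\btheta))$, and subtraction produces $-\deltawkc(\bw_h,\bzeta_\btheta)$.

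There is no substantive obstacle here: the two proofs are purely bookkeeping once one observes the two key facts (conforming regularity of $\bE$ and $\bzeta_\btheta$ eliminating $s_\sharp$ and collapsing $\bChz$ to $\ROTZ$; and the automatic $\Hrot$-regularity of $\bnu\ROTZ\bE$ and $\bnu\ROTZ\bzeta_\btheta$, which is what makes $\deltawkc$ well-defined in the first place). The only mild care needed is to keep track of signs and to note that the identity \eqref{eq:bh=b} is not used directly: we genuinely need the strong PDE interpretation because the test function $\bw_h\in\Pkb$ is nonconforming in general.
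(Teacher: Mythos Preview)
Your proof is correct and follows essentially the same route as the paper: use the discrete equation (resp.\ the strong form of the adjoint problem) to isolate $\bss(\bE,\bw_h)-(\bJ,\bw_h)$ (resp.\ $\bss(\bw_h,\bzeta_\btheta)-(\bw_h,\btheta)_\eps$), then exploit $\bE,\bzeta_\btheta\in\Hrotz$ to collapse $\bChz$ to $\ROTZ$ and kill $s_\sharp$, and finally identify the remainder as $-\deltawkc$. Your explicit remark that $\bJ\in\Ldeuxd$ and $\btheta\in\Ldeuxd$ force $\bnu\ROTZ\bE,\bnu\ROTZ\bzeta_\btheta\in\Hrot$ (so that $\deltawkc$ is well-defined) is a useful clarification that the paper leaves implicit.
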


\begin{proof}
Recall the definition~\eqref{eq:def_bss} of $\bss$ and the 
assumption~\eqref{eq:ker_sh} on $s_\sharp$.
To prove~\eqref{eq:GOb}, we observe that
\begin{align*}
\bss(\bE-\bE_h,\bw_h)-\bss(\bE_h,\bw_h) 
&= \bss(\bE,\bw_h)-(\bJ,\bw_h) \\
&= \bs(\bE,\bw_h)-(\bJ,\bw_h) \\
&=(\ROTZ\bE,\bChz(\bw_h))_\bnu - (\ROT(\bnu\ROTZ\bE),\bw_h) \\
&= - \deltawkc(\bw_h,\bE).
\end{align*}
To prove~\eqref{eq:GOd}, we observe that
\begin{align*}
\bss(\bw_h,\bzeta_\btheta) &=\bs(\bw_h,\bzeta_\btheta)=-\omega^2(\bw_h,\bzeta_\btheta)_\eps + (\bChz(\bw_h),\ROTZ\bzeta_\btheta)_\bnu \\
&=(\bw_h,-\omega^2\eps\bzeta_\btheta+\ROT(\bnu\ROTZ\bzeta_\btheta))- \deltawkc(\bw_h,\bzeta_\btheta) \\
&=(\bw_h,\btheta)_\eps - \deltawkc(\bw_h,\bzeta_\btheta).
\end{align*}
This completes the proof.
\end{proof}

The above result motivates the need to bound the weak consistency error on the discrete curl.}%

\begin{lemma}[Weak consistency]
\label{lem:weak_cons}
For all $\bPsi \in \Hrotz$ such that $\bnu \ROTZ \bPsi \in \Hrot$ and for all $\bv_h \in \Pkb$,
the following holds true:
\begin{equation} \label{eq:adj_cons} % \begin{aligned}
|\deltawkc(\bv_h,\bPsi)| \leq
\snormnc{\bv_h} \min_{\bPhi\upc_h \in \Plbrot}
\tnormH{\bnu \ROTZ \bPsi-\bPhi\upc_h},
% &\leq
% \rho^{-1}\tsc{\bv_h} \min_{\bPhi\upc_h \in \Plbrot}
% \tnormH{\bnu \ROTZ \bPsi-\bPhi\upc_h}. \end{aligned} 
\end{equation}
with $\Plbrot \eqq \bP\upb_\ell(\calT_h)\cap \Hrot$.
\end{lemma}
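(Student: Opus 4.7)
The plan is to exploit two Galerkin-type cancellations of $\deltawkc(\SCAL,\bPsi)$: one on the dual side against $\Plbrot$, and one on the primal side against $\Pkbrotz$. Combined with a weighted Cauchy--Schwarz inequality, these directly yield the stated bound.

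First, I would establish the pivotal identity $(\bw_h, \ROT \bPhi\upc_h) = (\bChz(\bw_h), \bPhi\upc_h)$ for all $\bw_h \in \Pkb$ and all $\bPhi\upc_h \in \Plbrot$. Cellwise integration by parts yields
\begin{equation*}
(\bw_h, \ROT \bPhi\upc_h) = (\ROTh \bw_h, \bPhi\upc_h) + \sum_{F \in \calFh} (\jump{\bw_h}\upc_F, \avg{\bPhi\upc_h}\upg_F)_{\bL^2(F)},
\end{equation*}
where the face sum follows (with the convention for boundary faces) from the single-valued tangential trace of $\bPhi\upc_h \in \Hrot$. Since $\bPhi\upc_h \in \bP\upb_\ell(\calT_h)$, the face sum equals $(\bLhz(\bw_h), \bPhi\upc_h)$ by the very definition~\eqref{eq:def_Lh} of the lifting, and the claim follows from $\bChz(\bw_h) = \ROTh \bw_h + \bLhz(\bw_h)$. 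Using this identity, and recalling that $(\bChz(\bv_h), \ROTZ \bPsi)_\bnu = (\bChz(\bv_h), \bnu\ROTZ\bPsi)$ by symmetry of $\bnu$, one obtains, for any $\bPhi\upc_h \in \Plbrot$,
\begin{equation*}
\deltawkc(\bv_h, \bPsi) = \bigl(\bv_h, \ROT (\bnu\ROTZ\bPsi - \bPhi\upc_h)\bigr) - \bigl(\bChz(\bv_h), \bnu\ROTZ\bPsi - \bPhi\upc_h\bigr).
\end{equation*}

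Next, I would observe that this right-hand side is unchanged when $\bv_h$ is replaced by $\bv_h - \bv_h\upc$ for any $\bv_h\upc \in \Pkbrotz$: for such a conforming field, $\bLhz(\bv_h\upc) = \bzero$ so $\bChz(\bv_h\upc) = \ROTZ \bv_h\upc$, and since $\bv_h\upc \in \Hrotz$ and $\bnu\ROTZ\bPsi - \bPhi\upc_h \in \Hrot$, the standard integration by parts yields $(\bv_h\upc, \ROT (\bnu\ROTZ\bPsi - \bPhi\upc_h)) = (\ROTZ \bv_h\upc, \bnu\ROTZ\bPsi - \bPhi\upc_h)$, so the two contributions of $\bv_h\upc$ cancel. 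Setting $\bv_h^\star \eqq \bv_h - \bv_h\upc$ and applying the Cauchy--Schwarz inequality to each term (using the weights $\tnu^{\frac12}\th^{-1}$ and $\tnu^{-\frac12}\th$ on the first, and the $\bnu$-weighted inner product on the second), followed by the Cauchy--Schwarz inequality in $\mathbb{R}^2$, gives
\begin{equation*}
|\deltawkc(\bv_h, \bPsi)| \le \tnormE{\bv_h^\star}\, \tnormH{\bnu\ROTZ\bPsi - \bPhi\upc_h}.
\end{equation*}
Taking successively the infimum over $\bv_h\upc \in \Pkbrotz$ (which recovers $\snormnc{\bv_h}$) and over $\bPhi\upc_h \in \Plbrot$ concludes. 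The argument presents no genuine obstacle; the delicate point is that the polynomial space $\bP\upb_\ell(\calT_h)$ of the lifting must contain $\Plbrot$ to enable the exact cancellation in the pivotal identity -- precisely the purpose of introducing the space $\Plbrot$ in the statement.
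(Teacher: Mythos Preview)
Your proof is correct and follows essentially the same route as the paper's: the pivotal identity $(\bw_h,\ROT\bPhi\upc_h)=(\bChz(\bw_h),\bPhi\upc_h)$ for $\bPhi\upc_h\in\Plbrot$, the subtraction of an arbitrary conforming $\bv_h\upc\in\Pkbrotz$ via the adjoint integration-by-parts identity, and the weighted Cauchy--Schwarz bound are exactly the steps used in the paper. Your observation that the lifting space $\bP\upb_\ell(\calT_h)$ must contain $\Plbrot$ for the pivotal identity to hold is also the key constraint highlighted implicitly by the paper's construction.
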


\begin{proof}
We follow the idea of \cite{Chaumont:23_ipdghelmholtz} for the Helmholtz problem.
For any field $\bPhi\upc_h \in \Plbrot$, integration by parts gives
\begin{equation*}
(\bv_h,\ROT\bPhi\upc_h)
=
(\bChz(\bv_h),\bPhi\upc_h).
\end{equation*}
We infer that
\[
\deltawkc(\bv_h,\bPsi) = (\bv_h,\ROT(\bnu\ROTZ\bPsi-\bPhi\upc_h)) - (\bChz(\bv_h),\bnu\ROTZ\bPsi-\bPhi\upc_h).
\]
Let $\bv_h\upc\in \Pkbrotz$ and observe that
$\bChz(\bv_h\upc) = \ROTZ \bv_h\upc$. Integration by parts using 
$\bzeta \eqq \bnu\ROTZ\bPsi-\bPhi\upc_h \in \Hrot$ gives
\[
(\bv_h\upc,\ROT\bzeta) = (\ROTZ\bv_h\upc,\bzeta) = (\bChz(\bv_h\upc),\bzeta).
\]
Putting everything together yields
\begin{align*}
\deltawkc(\bv_h,\bPsi)
&=
(\bv_h-\bv_h\upc,\ROT(\bnu\ROTZ\bPsi-\bPhi\upc_h)) - (\bChz(\bv_h-\bv_h\upc),\bnu\ROTZ\bPsi - \bPhi\upc_h)
\\
&\leq
\tnormE{\bv_h-\bv\upc_h}\tnormH{\bnu\ROTZ\bPsi-\bPhi\upc_h},
\end{align*}
where we used the Cauchy--Schwarz inequality and the 
definitions~\eqref{eq:tnormE} and~\eqref{eq:tnormadj} of the norms $\tnormE{\SCAL}$
and $\tnormH{\SCAL}$, respectively.
Taking the infimum over $\bv_h\upc\in \Pkbrotz$ and over $\bPhi\upc_h \in \Plbrot$
completes the proof.
% proves the first bound in~\eqref{eq:adj_cons}. The second bound readily follows from the assumption~\eqref{eq:def_rho} on $s_\sharp$.
\end{proof}

\section{A priori error analysis and inf-sup stability}
\label{sec:a_priori}

This section is devoted to the error analysis of the dG approximation.
As usual with Schatz-like arguments, we first establish an error estimate
by assuming that the discrete solution $\bE_h$ exists and then
we prove that the discrete problem~\eqref{eq:disc_pb} is indeed well-posed if $h$ is small enough.

\subsection{Error decomposition and best approximation}
\label{sec:err_dec}

We define the approximation error $\be \eqq  \bE-\bE_h$ and consider the error decomposition 
\begin{equation} \label{eq:def_e_theta}
\be = \btheta_0 + \btheta_\Pi,
\end{equation}
with 
\begin{equation}
\btheta_0\eqq(I-\bPi\upc_0)(\be)\in \Hrotzrotz^\perp, \qquad 
\btheta_\Pi\eqq\bPi\upc_0(\be) \in \Hrotzrotz.
\end{equation}
%%%   Let us define the bilinear form $\bss^+$ on $\bVsh\times \bVsh$ such that
%%%   \begin{equation} \label{eq:def_bsharp_+}
%%%   \bss^+(\bv,\bw) \eqq \omega^2(\bv,\bw)_\eps + (\bChz(\bv),\bChz(\bw))_\bnu + s_\sharp(\bv,\bw).
%%%   \end{equation}
%%%   The difference with respect to $\bss$ lies in the sign of the zero-order term.
Let us define the bilinear forms $\bs^+$ and $\bss^+$ on $\bVsh\times \bVsh$ such that
\begin{subequations} \begin{align} 
\bs^+(\bv,\bw) &\eqq \omega^2(\bv,\bw)_\eps + (\bChz(\bv),\bChz(\bw))_\bnu, \label{eq:def_bss_+} \\
\bss^+(\bv,\bw) &\eqq \bs^+(\bv,\bw) + s_\sharp(\bv,\bw). \label{eq:def_bs_+} 
\end{align} \end{subequations}%
The difference with $\bs$ and $\bss$ lies in the sign of the zero-order term.
We define the best-approximation operator
$\bestb: \bVsh \rightarrow \Pkb$ as follows: For
all $\bv \in \bVsh$, $\bestb(\bv) \in \Pkb$ is such that
\begin{equation} \label{eq:def_best_b}
\bss^+(\bv-\bestb(\bv),\bw_h) = 0 \quad \forall \bw_h\in \Pkb.
\end{equation}
The best-approximation error is defined to be
\begin{equation}
\beeta \eqq \bE - \bestb(\bE).
\end{equation}

\begin{lemma}[Properties of $\bestb$] \label{lem:bestb}
The best-approximation operator $\bestb$ defined in~\eqref{eq:def_best_b} enjoys the following two properties:
\begin{subequations} \begin{alignat}{2}
&\tnormss{\bestb(\bv)} \le \tnormss{\bv},&\qquad&\forall \bv\in \bVsh, \label{eq:stab_best_h} \\
&\bestb(\bv) \in \bX\upb_h,&\qquad&\forall \bv \in \Pkbrotzrotz^\perp. \label{eq:useful_pty_b}
\end{alignat} 
In particular, the error $\be=\bE-\bE_h$ satisfies
\begin{equation} \label{eq:bestb_e_0}
\bestb(\be) \in \bX\upb_h.
\end{equation}
\end{subequations}
\end{lemma}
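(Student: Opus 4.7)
The plan is to exploit the fact that $\bss^+$ is a genuine inner product on $\bVsh$ whose associated norm is precisely $\tnormss{\cdot}$. Indeed, from the definitions~\eqref{eq:def_bss_+}--\eqref{eq:def_bs_+} and~\eqref{eq:def_tnorm}, one has $\bss^+(\bv,\bv) = \omega^2\|\bv\|_\eps^2 + \|\bChz(\bv)\|_\bnu^2 + s_\sharp(\bv,\bv) = \tnormss{\bv}^2$, and the positive semidefiniteness of $s_\sharp$ from~\eqref{eq:sh_sym} together with the $\omega^2$ mass term ensures $\bss^+$ is an SPD inner product. Hence $\bestb(\bv)$ is simply the $\bss^+$-orthogonal projection of $\bv$ onto the finite-dimensional subspace $\Pkb$, and the defining relation~\eqref{eq:def_best_b} is exactly the orthogonality condition.

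For the stability bound~\eqref{eq:stab_best_h}, I would invoke the Pythagoras identity in the $\bss^+$-inner product, which yields $\tnormss{\bv}^2 = \tnormss{\bv-\bestb(\bv)}^2 + \tnormss{\bestb(\bv)}^2$, and conclude immediately. This step is entirely routine.

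For~\eqref{eq:useful_pty_b}, the point is to show that $\bPi\upc_{h0}(\bestb(\bv))=\bzero$ whenever $\bv\in\Pkbrotzrotz^\perp$, i.e., that $\bestb(\bv)$ is $\bL^2_\eps$-orthogonal to $\Pkbrotzrotz$. Let $\bw_h \in \Pkbrotzrotz \subset \Pkb$ be an arbitrary test function. The key observation is that such a $\bw_h$ sits in $\Hrotzrotz\subset\Hrotz$, so by~\eqref{eq:ext_Ch_j} one has $\bChz(\bw_h)=\ROTZ\bw_h=\bzero$, and by~\eqref{eq:ker_sh} one has $s_\sharp(\cdot,\bw_h)=0$. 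Consequently $\bss^+(\bu,\bw_h)=\omega^2(\bu,\bw_h)_\eps$ for every $\bu\in\bVsh$. Applying this both to $\bu=\bestb(\bv)$ and to $\bu=\bv$, and combining with the Galerkin-type orthogonality $\bss^+(\bv-\bestb(\bv),\bw_h)=0$ from~\eqref{eq:def_best_b}, I get $\omega^2(\bestb(\bv),\bw_h)_\eps = \omega^2(\bv,\bw_h)_\eps$. Since $\bv\in\Pkbrotzrotz^\perp$, the right-hand side vanishes, and $\bw_h\in\Pkbrotzrotz$ being arbitrary yields the claim. The mild subtlety to keep in mind is the consistent use of the extended curl~\eqref{eq:ext_Ch_j} so that conforming fields in the sum $\bVsh=\Hrotz+\Pkb$ are correctly handled.

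Finally,~\eqref{eq:bestb_e_0} is a direct corollary: Lemma~\ref{lem:GO} gives $\bPi\upc_{h0}(\be)=\bzero$, so $\be\in \Pkbrotzrotz^\perp$, and~\eqref{eq:useful_pty_b} then yields $\bestb(\be)\in\bX\upb_h$. No real obstacle is anticipated; the only point requiring care is verifying that for $\bw_h\in\Pkbrotzrotz$ both the discrete curl contribution and the stabilization contribution to $\bss^+$ collapse, which is exactly the purpose of the design assumptions~\eqref{eq:ass_sh} on $s_\sharp$ and of the extension convention~\eqref{eq:ext_Ch_j}.
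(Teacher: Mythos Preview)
Your proposal is correct and follows essentially the same approach as the paper's proof: both recognize that $\bss^+$ is the inner product associated with $\tnormss{\cdot}$ (giving the stability bound via the orthogonal projection property), both test the defining relation~\eqref{eq:def_best_b} against $\bw_h\in\Pkbrotzrotz$ and use $\bChz(\bw_h)=\bzero$ together with $s_\sharp(\cdot,\bw_h)=0$ to reduce $\bss^+$ to the $\bL^2_\eps$-pairing, and both conclude~\eqref{eq:bestb_e_0} by combining~\eqref{eq:GO_perp} with~\eqref{eq:useful_pty_b}. Your write-up is slightly more explicit (invoking Pythagoras and spelling out the collapse of $\bss^+$ on curl-free conforming test functions), but the argument is the same.
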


\begin{proof}
\eqref{eq:stab_best_h} follows from the fact that the bilinear form $\bss^+$
is the inner product associated with the $\tnormss{\SCAL}$-norm. To prove~\eqref{eq:useful_pty_b}, 
consider any $\bv \in \Pkbrotzrotz^\perp$. Take any $\bw_h \in \Pkbrotzrotz$
in~\eqref{eq:def_best_b} and observe that $\bChz(\bw_h)=\bzero$ and $s_\sharp(\SCAL,\bw_h)=0$.
Since
\[
\omega^2(\bestb(\bv),\bw_h)_\eps = \omega^2(\bestb(\bv)-\bv,\bw_h)_\eps 
= \bss^+(\bestb(\bv)-\bv,\bw_h) = 0,
\]
we infer that $\bestb(\bv) \in \Pkbrotzrotz^\perp$. Moreover, 
$\bestb(\bv)\in \Pkb$ by construction. This proves~\eqref{eq:useful_pty_b}.
Finally, \eqref{eq:bestb_e_0} follows from~\eqref{eq:GO_perp}
and~\eqref{eq:useful_pty_b}.
\end{proof}

\subsection{Preliminary bounds}

\begin{lemma}[Bound on $\btheta_0$] \label{lem:bnd_thet1}
We have
\begin{equation}
\label{eq:bnd_thet1}
\omega \|\btheta_0\|_\eps \le \gp\, \tnorms{\be}+\gd\,\snormnc{\be},
\end{equation}
with the approximation factors $\gp$ and $\gd$ defined in~\eqref{eq:def_gp}
and~\eqref{eq:def_gd}, respectively.
\end{lemma}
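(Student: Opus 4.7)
The plan is to use an Aubin--Nitsche-style duality argument based on the adjoint solution $\bzeta_{\btheta_0} \in \Hrotz$ defined by~\eqref{eq:adjoint} with data $\btheta := \btheta_0 \in \Hrotzrotz^\perp$. The starting identity comes from the $\bL^2_\eps$-orthogonality of $\bPi\upc_0$, which gives $\|\btheta_0\|_\eps^2 = (\be,\btheta_0)_\eps$. Since $\btheta_0 \in \Ldeuxd$, inspection of the adjoint PDE shows that $\ROT(\bnu\ROTZ\bzeta_{\btheta_0}) \in \Ldeuxd$, so $\bnu \ROTZ \bzeta_{\btheta_0}\in\Hrot$ and Lemma~\ref{lem:weak_cons} will be applicable with $\bPsi := \bzeta_{\btheta_0}$.

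The core observation is that $(\be,\btheta_0)_\eps = (\bE,\btheta_0)_\eps - (\bE_h,\btheta_0)_\eps$ must be handled asymmetrically because only the first argument is conforming. For the conforming part, testing the adjoint problem with $\bE\in\Hrotz$ and invoking~\eqref{eq:bh=b} gives $(\bE,\btheta_0)_\eps = b(\bE,\bzeta_{\btheta_0}) = \bss(\bE,\bzeta_{\btheta_0})$. For the nonconforming part, the weak consistency identity~\eqref{eq:GOd} applied with $\bw_h := \bE_h \in \Pkb$ yields $(\bE_h,\btheta_0)_\eps = \bss(\bE_h,\bzeta_{\btheta_0}) + \deltawkc(\bE_h,\bzeta_{\btheta_0})$. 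Subtracting produces the key identity
\[
\|\btheta_0\|_\eps^2 = \bss(\be,\bzeta_{\btheta_0}) - \deltawkc(\bE_h,\bzeta_{\btheta_0}).
\]

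To bound the first term on the right-hand side, I would subtract an arbitrary $\bv_h\upc\in\Pkbrotz$ using Galerkin orthogonality (Lemma~\ref{lem:GO}), apply the boundedness estimate~\eqref{eq:bnd_bhs_Hrot} (which is valid since $\bzeta_{\btheta_0}-\bv_h\upc\in\Hrotz$), take the infimum over $\bv_h\upc$, and recognize the resulting quantity as $\gp\|\btheta_0\|_\eps/\omega$ via definition~\eqref{eq:def_gp}. For the second term, I would apply Lemma~\ref{lem:weak_cons} directly, take the infimum over $\bPhi\upc_h\in\Plbrot$, and recognize the resulting quantity as $\gd\|\btheta_0\|_\eps/\omega$ via~\eqref{eq:def_gd}, using that $\snormnc{\bE_h}=\snormnc{\be}$ because $\bE\in\Hrotz$ has vanishing tangential jumps (so the decomposition $\be = \bE + (-\bE_h)$ in~\eqref{eq:def_JMP} contributes only through $-\bE_h$). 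Dividing the resulting inequality by $\|\btheta_0\|_\eps$ delivers~\eqref{eq:bnd_thet1}.

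The only nontrivial bookkeeping is recognising \emph{why} the duality error splits cleanly into a ``conforming'' contribution (controlled by $\gp$, via the best approximation of $\bzeta_{\btheta_0}$ itself) paired with $\tnorms{\be}$, and a ``nonconformity'' contribution (controlled by $\gd$, via the best approximation of $\bnu\ROTZ\bzeta_{\btheta_0}$ in $\Plbrot$) paired with $\snormnc{\be}$; once Lemma~\ref{lem:weak_cons} is available and the adjoint-test-function decomposition above is in place, the rest is routine.
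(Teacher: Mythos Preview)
Your proposal is correct and follows essentially the same route as the paper: the same duality identity $\|\btheta_0\|_\eps^2 = \bss(\be,\bzeta_{\btheta_0}) - \deltawkc(\bE_h,\bzeta_{\btheta_0})$, then Galerkin orthogonality on conforming test functions plus~\eqref{eq:bnd_bhs_Hrot} for the first term and Lemma~\ref{lem:weak_cons} for the second, with the same observation $\snormnc{\bE_h}=\snormnc{\be}$. Your explicit remark that $\bnu\ROTZ\bzeta_{\btheta_0}\in\Hrot$ (needed to invoke Lemma~\ref{lem:weak_cons}) is a welcome clarification that the paper leaves implicit.
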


\begin{proof}
Let $\bzeta_\btheta\in \Hrotz$ solve 
the adjoint problem~\eqref{eq:adjoint} with data $\btheta\eqq\btheta_0$.
Since $\bE,\bzeta_\btheta\in \Hrotz$, we infer from \eqref{eq:bh=b} and the definition of the
adjoint solution that
\[
\bss(\bE,\bzeta_\btheta)=b(\bE,\bzeta_\btheta)=(\bE,\btheta_0)_\eps.
\]
Owing to~\eqref{eq:GOd}, we infer that
\begin{align*}
\bss(\bE_h,\bzeta_\btheta) &=(\bE_h,\btheta_0)_\eps - \deltawkc(\bE_h,\bzeta_\btheta).
\end{align*}
Combining the above two identities and using $(\btheta_0,\btheta_\Pi)_\eps=0$ gives
\begin{equation} \label{eq:theta_eps_sq}
\omega \|\btheta_0\|_\eps^2 = \omega (\be,\btheta_0)_\eps =
\omega \bss(\be,\bzeta_\btheta) - \omega\deltawkc(\bE_h,\bzeta_\btheta).
\end{equation}
Owing to Galerkin orthogonality on conforming test functions (see~\eqref{eq:GO}), we infer that, 
for all $\bv_h\upc\in \Pkbrotz$,
\begin{equation} 
\omega \|\btheta_0\|_\eps^2 =
\omega \bss(\be,\bzeta_\btheta-\bv_h\upc) - \omega\deltawkc(\bE_h,\bzeta_\btheta).
\label{eq:id_theta}
\end{equation}
Invoking the boundedness property~\eqref{eq:bnd_bhs_Hrot}
on $\bss$, and using the definition of the approximation factor $\gp$ gives
\[
\omega|\bss(\be,\bzeta_\btheta-\bv_h\upc)|
\le
\tnorms{\be}\, \omega\tnorm{\bzeta_\btheta-\bv_h\upc}
\le \tnorms{\be}\, \gp \|\btheta_0\|_\eps.
\] 
Moreover, invoking Lemma~\ref{lem:weak_cons} to bound the 
weak consistency error and recalling 
the  definition of the approximation factor $\gd$ gives
\[
\omega |\deltawkc(\bE_h,\bzeta_\btheta)|
\le
\snormnc{\bE_h}\, \gd \|\btheta_0\|_\eps
=
\snormnc{\be}\, \gd \|\btheta_0\|_\eps. % \rho^{-1} \tsc{\bE_h}\, \gd \|\btheta_0\|_\eps.
\]
Putting the above two bounds together %and observing that $\tsc{\bE_h}=\tsc{\be}$ 
proves \eqref{eq:bnd_thet1}. 
\end{proof}

\begin{lemma}[Bound on $\btheta_\Pi$] \label{lem:bnd_thet2}
We have
\begin{equation} \label{eq:bnd_thet2}
\omega \|\btheta_\Pi\|_\eps \le \omega \|\bPi\upc_0(\beeta)\|_\eps 
+ \gdiv \big\{ \|\bChz(\bestb(\be))\|_\bnu^2 + 
\snormnc{\bestb(\be)}^2 \big\}^{\frac12}.
\end{equation}
\end{lemma}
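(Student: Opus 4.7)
The plan is to split the error as $\be = \beeta + \bestb(\be)$, apply the continuous projection $\bPi\upc_0$ to both pieces, and then invoke the divergence conformity factor $\gdiv$ to control the discrete piece via \eqref{eq:bestb_e_0}.

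The cornerstone is the algebraic identity $\be = \beeta + \bestb(\be)$. To establish it, I would first observe that $\bss^+$ is an inner product on $\bVsh$ (indeed, it is precisely the inner product associated with the $\tnormss{\SCAL}$-norm), so the operator $\bestb$ defined by \eqref{eq:def_best_b} is the corresponding $\bss^+$-orthogonal projection onto $\Pkb$. In particular, $\bestb$ acts as the identity on $\Pkb$, so $\bestb(\bE_h)=\bE_h$, and linearity gives $\bestb(\be) = \bestb(\bE) - \bE_h$. Therefore,
$$
\be = \bE - \bE_h = (\bE - \bestb(\bE)) + (\bestb(\bE) - \bE_h) = \beeta + \bestb(\be).
$$

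Applying $\bPi\upc_0$ and using the triangle inequality in $\|\SCAL\|_\eps$, we obtain
$$
\omega\|\btheta_\Pi\|_\eps = \omega\|\bPi\upc_0(\be)\|_\eps \le \omega\|\bPi\upc_0(\beeta)\|_\eps + \omega\|\bPi\upc_0(\bestb(\be))\|_\eps.
$$
The second term on the right-hand side is controlled by the divergence conformity factor: since \eqref{eq:bestb_e_0} asserts that $\bestb(\be) \in \bX\upb_h$, the definition \eqref{eq_gamma_bX_DG} of $\gdiv$ applies and yields
$$
\omega\|\bPi\upc_0(\bestb(\be))\|_\eps \le \gdiv\bigl\{\|\bChz(\bestb(\be))\|_\bnu^2 + \snormnc{\bestb(\be)}^2\bigr\}^{1/2}.
$$
Combining the two displays produces \eqref{eq:bnd_thet2}.

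There is no genuine obstacle here. The argument is purely algebraic and relies only on two facts proved earlier: that $\bestb$ is idempotent on $\Pkb$ (immediate from $\bss^+$ being an inner product) and that $\bestb(\be)$ belongs to $\bX\upb_h$ (which itself follows from Galerkin orthogonality on conforming test functions, as stated in Lemma~\ref{lem:bestb}). The only point worth being careful about is that the projection $\bPi\upc_0$ appearing in the definition of $\gdiv$ is the \emph{continuous} one (projecting onto $\Hrotzrotz$), not the discrete $\bPi\upc_{h0}$, so the estimate applies directly to $\omega\|\bPi\upc_0(\bestb(\be))\|_\eps$ without any further intermediate step.
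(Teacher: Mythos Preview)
Your proof is correct and rests on the same decomposition $\be=\beeta+\bestb(\be)$ and the same key fact $\bestb(\be)\in\bX\upb_h$ as the paper. The only difference is cosmetic: the paper expands $\|\btheta_\Pi\|_\eps^2$ as an inner product and applies Cauchy--Schwarz to each piece, whereas you apply $\bPi\upc_0$ to the decomposition and use the triangle inequality directly; your route is slightly shorter but yields the identical bound.
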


\begin{proof}
% We adapt the proof of \cite[Lemma 4.3]{ThCFE:23}. 
We observe that
\[
\btheta_\Pi + \btheta_0 = \be = \bestb(\be)+(I-\bestb)(\be) = \bestb(\be)+\beeta,
\]
since $(I-\bestb)(\bE_h)=\bzero$. This gives
$\btheta_\Pi =\bestb(\be)+\beeta-\btheta_0.$
Since $(\btheta_\Pi,\btheta_0)_\eps=0$, we infer that
\begin{align*}
\|\btheta_\Pi\|_\eps^2 &= (\btheta_\Pi,\bestb(\be))_\eps + (\btheta_\Pi,\beeta)_\eps \\
&= (\btheta_\Pi,\bPi\upc_0(\bestb(\be)))_\eps + (\btheta_\Pi,\bPi\upc_0(\beeta))_\eps\qqe
\Theta_1 + \Theta_2,
\end{align*}
where we used that $\btheta_\Pi=\bPi\upc_0(\btheta_\Pi)$ and 
that $\bPi\upc_0$ is self-adjoint for the inner product $(\SCAL,\SCAL)_\eps$. 
We bound $\Theta_1$ as follows:
\begin{align*}
|\Theta_1| &\le \|\btheta_\Pi\|_\eps \, \|\bPi\upc_0(\bestb(\be))\|_\eps \\
&\le \|\btheta_\Pi\|_\eps \,\gdiv \omega^{-1} \big\{ \|\bChz(\bestb(\be))\|_\bnu^2 + 
\snormnc{\bestb(\be)}^2 \big\}^{\frac12},
\end{align*}
where we used the divergence conformity factor defined in~\eqref{eq_gamma_bX_DG} 
(this is legitimate since $\bestb(\be) \in \bX\upb_h$ owing to~\eqref{eq:bestb_e_0}). 
% Recalling the definition of the $\tnorms{\SCAL}$-norm
% and invoking~\eqref{eq:stab_best_h} gives
% \[
% \Theta_1 \le \|\btheta_\Pi\|_\eps \,\gdiv \omega^{-1} \tnorms{\bestb(\be)}
% \le \|\btheta_\Pi\|_\eps \,\gdiv \omega^{-1} \tnorms{\be}.
% \]
Moreover, the Cauchy--Schwarz inequality gives
\[
|\Theta_2| \le \|\btheta_\Pi\|_\eps \, \|\bPi\upc_0(\beeta)\|_\eps.
\]
Putting the above two bounds together proves the assertion.
\end{proof}

\begin{lemma}[Bound on $\tnormss{\btheta_0}$]
\label{lem:bnd_rot_theta}
We have
\begin{align}
\tnormss{\btheta_0}^2 \le {}& \tnormss{(I-\bPi\upc_0)(\beeta)}^2 + 2\omega \|\btheta_\Pi\|_\eps \, \gdiv \big\{ \|\bChz(\bestb(\be))\|_\bnu^2 + 
\snormnc{\bestb(\be)}^2 \big\}^{\frac12} \nonumber \\
&+2\snormnc{\bestb(\be)} \min_{\bPhi\upc_h \in \Plbrot}
\tnormH{\bnu \ROTZ \bE-\bPhi\upc_h} + 4 \omega^2 \|\btheta_0\|_\eps^2. \label{eq:bnd_rot_theta}
\end{align}
\end{lemma}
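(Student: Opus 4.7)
The plan is to compute $\tnormss{\btheta_0}^2 = \bss^+(\btheta_0,\btheta_0)$ by expanding the second argument using the double decomposition $\btheta_0 = \bestb(\be) + \beeta - \btheta_\Pi$ (which follows from $\be=\btheta_0+\btheta_\Pi$ and $\be=\bestb(\be)+\beeta$). The orthogonality of $\btheta_0\in\Hrotzrotz^\perp$ against $\Hrotzrotz$, combined with $\bChz(\bw)=\ROTZ\bw$ and $s_\sharp(\cdot,\bw)=0$ for any $\bw\in\Hrotz$, kills the $\btheta_\Pi$ contribution entirely, and reduces the $\beeta$ contribution to $\bss^+(\btheta_0,(I-\bPi\upc_0)(\beeta))$ (since $\bPi\upc_0(\beeta)\in\Hrotzrotz$). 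This yields
\[
\tnormss{\btheta_0}^2 = \bss^+(\btheta_0, \bestb(\be)) + \bss^+(\btheta_0, (I-\bPi\upc_0)(\beeta)).
\]

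Next I would use $\bss^+ = \bss + 2\omega^2(\cdot,\cdot)_\eps$ to split both terms. A direct algebraic check shows that the two $2\omega^2(\btheta_0,(I-\bPi\upc_0)(\beeta))_\eps$ contributions cancel once we rewrite $(\btheta_0,\bestb(\be))_\eps = \|\btheta_0\|_\eps^2 - (\btheta_0,(I-\bPi\upc_0)(\beeta))_\eps$ (obtained from $\bestb(\be)=\btheta_0+\btheta_\Pi-\beeta$). Then writing $\bss(\btheta_0,\bestb(\be)) = \bss(\be,\bestb(\be))-\bss(\btheta_\Pi,\bestb(\be))$, the first term is handled by the weak consistency identity~\eqref{eq:GOb} (giving $-\deltawkc(\bestb(\be),\bE)$), while the second collapses to $-\omega^2(\btheta_\Pi,\bestb(\be))_\eps$ thanks to $\ROTZ\btheta_\Pi=\bzero$ and $s_\sharp(\btheta_\Pi,\cdot)=0$. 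The outcome is the clean identity
\[
\tnormss{\btheta_0}^2 = -\deltawkc(\bestb(\be),\bE) + \omega^2(\btheta_\Pi,\bestb(\be))_\eps + 2\omega^2\|\btheta_0\|_\eps^2 + \bss(\btheta_0,(I-\bPi\upc_0)(\beeta)).
\]

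Finally I would bound each term: Lemma~\ref{lem:weak_cons} controls the weak consistency error by $\snormnc{\bestb(\be)}\min_{\bPhi\upc_h\in\Plbrot}\tnormH{\bnu\ROTZ\bE-\bPhi\upc_h}$; self-adjointness of $\bPi\upc_0$ together with the divergence-conformity factor~\eqref{eq_gamma_bX_DG}, applied to $\bestb(\be)\in\bX\upb_h$ (which holds by~\eqref{eq:bestb_e_0}), bounds $\omega^2|(\btheta_\Pi,\bestb(\be))_\eps|$ by $\omega\|\btheta_\Pi\|_\eps\,\gdiv\{\|\bChz(\bestb(\be))\|_\bnu^2+\snormnc{\bestb(\be)}^2\}^{1/2}$; and the boundedness~\eqref{eq:bnd_bhs} of $\bss$ combined with Young's inequality $ab\le\tfrac12 a^2+\tfrac12 b^2$ gives $|\bss(\btheta_0,(I-\bPi\upc_0)(\beeta))|\le \tfrac12\tnormss{\btheta_0}^2+\tfrac12\tnormss{(I-\bPi\upc_0)(\beeta)}^2$. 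Absorbing $\tfrac12\tnormss{\btheta_0}^2$ on the left and multiplying by two produces exactly the factors $1,2,2,4$ of the claimed inequality~\eqref{eq:bnd_rot_theta}.

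The main obstacle I anticipate is purely bookkeeping: keeping track of which cross terms cancel between $\bss^+(\btheta_0,\bestb(\be))$ and $\bss^+(\btheta_0,(I-\bPi\upc_0)(\beeta))$ when switching between $\bss^+$ and $\bss$, and correctly identifying that $\btheta_\Pi\in\Hrotzrotz$ annihilates both the curl coupling and the stabilization coupling in $\bss^+$. The remaining difficulty is to invoke the weak consistency identity~\eqref{eq:GOb} at the non-conforming test function $\bestb(\be)$—which is legitimate precisely because~\eqref{eq:GOb} is stated for arbitrary $\bw_h\in\Pkb$, not just $\Hrotz$-conforming ones.
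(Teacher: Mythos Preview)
Your proposal is correct and follows essentially the same approach as the paper: both arguments hinge on the decomposition $\be=\beeta+\bestb(\be)$, invoke the weak consistency identity~\eqref{eq:GOb} at $\bestb(\be)$, use $\bestb(\be)\in\bX\upb_h$ together with the divergence conformity factor, and finish with boundedness of $\bss$ plus Young's inequality. The only organizational difference is that the paper starts from $\bss(\btheta_0,\btheta_0)$ (using the decomposition $\btheta_0=(I-\bPi\upc_0)(\beeta)+(I-\bPi\upc_0)(\bestb(\be))$) and appends $2\omega^2\|\btheta_0\|_\eps^2$ at the end via $\tnormss{\btheta_0}^2=\bss(\btheta_0,\btheta_0)+2\omega^2\|\btheta_0\|_\eps^2$, whereas you start from $\bss^+(\btheta_0,\btheta_0)$ and recover the same term through your cancellation argument---these paths are algebraically equivalent.
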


\begin{proof}
Since $\be=\beeta+\bestb(\be)$ as shown in the above proof, we have
$\btheta_0=(I-\bPi\upc_0)(\be) = (I-\bPi\upc_0)(\beeta) + (I-\bPi\upc_0)(\bestb(\be))$.
This gives
\begin{align*}
\bss(\btheta_0,\btheta_0) &= \bss(\btheta_0,(I-\bPi\upc_0)(\beeta)) +
\bss(\btheta_0,(I-\bPi\upc_0)(\bestb(\be))) \\
&= \bss(\btheta_0,(I-\bPi\upc_0)(\beeta)) +
\bss(\be,(I-\bPi\upc_0)(\bestb(\be))), 
\end{align*}
where the second equality follows from $\bss(\btheta_\Pi,(I-\bPi\upc_0)(\SCAL))=0$.
The first term on the right-hand side is bounded by invoking the continuity property~\eqref{eq:bnd_bhs}, giving
\[
|\bss(\btheta_0,(I-\bPi\upc_0)(\beeta))| \le \tnormss{\btheta_0} \, \tnormss{(I-\bPi\upc_0)(\beeta)}.
\]
The second term is decomposed as
$\bss(\be,(I-\bPi\upc_0)(\bestb(\be)))=\beta_1+\beta_2$ with
\[
\beta_1 := \bss(\be,\bestb(\be)), \qquad
\beta_2 := -\bss(\be,\bPi\upc_0(\bestb(\be))) = \omega^2(\btheta_\Pi,\bPi\upc_0(\bestb(\be)))_\eps.
\]
Recalling~\eqref{eq:GOb}, i.e., the weak consistency of the discrete primal problem for all 
test functions in $\Pkb$, we have $\beta_1 = - \deltawkc(\bestb(\be),\bE)$. Hence, invoking
Lemma~\ref{lem:weak_cons} gives
\[
|\beta_1| \leq
\snormnc{\bestb(\be)} \min_{\bPhi\upc_h \in \Plbrot}
\tnormH{\bnu \ROTZ \bE-\bPhi\upc_h}.
\] 
Moreover, using the Cauchy--Schwarz inequality and since $\bestb(\be) \in \bX\upb_h$
(see~\eqref{eq:bestb_e_0}), we have 
\[
|\beta_2| \le \omega \|\btheta_\Pi\|_\eps \, \gdiv \big\{ \|\bChz(\bestb(\be))\|_\bnu^2 + 
\snormnc{\bestb(\be)}^2 \big\}^{\frac12}.
\]
Altogether, this gives
\begin{align*}
\bss(\btheta_0,\btheta_0) \le {}& \tnormss{\btheta_0} \, \tnormss{(I-\bPi\upc_0)(\beeta)} 
+ \omega \|\btheta_\Pi\|_\eps \, \gdiv \big\{ \|\bChz(\bestb(\be))\|_\bnu^2 + 
\snormnc{\bestb(\be)}^2 \big\}^{\frac12} \\
&+\snormnc{\bestb(\be)} \min_{\bPhi\upc_h \in \Plbrot}
\tnormH{\bnu \ROTZ \bE-\bPhi\upc_h}.
\end{align*}
Since $\tnormss{\btheta_0}^2 = \bss(\btheta_0,\btheta_0) + 2 \omega^2 \|\btheta_0\|_\eps^2$, we infer that
\begin{align*}
\tnormss{\btheta_0}^2 \le {}& \tnormss{\btheta_0} \, \tnormss{(I-\bPi\upc_0)(\beeta)} + \omega \|\btheta_\Pi\|_\eps \, \gdiv \big\{ \|\bChz(\bestb(\be))\|_\bnu^2 + 
\snormnc{\bestb(\be)}^2 \big\}^{\frac12} \\
&+\snormnc{\bestb(\be)} \min_{\bPhi\upc_h \in \Plbrot}
\tnormH{\bnu \ROTZ \bE-\bPhi\upc_h} + 2 \omega^2 \|\btheta_0\|_\eps^2.
\end{align*}
Dealing with the first term on the right-hand side by Young's inequality gives~\eqref{eq:bnd_rot_theta}.
\end{proof}

\subsection{A priori error estimate}

\rev{We are now ready to establish our main result on the a priori error analysis
which establishes asymptotic optimality. Importantly, the (frequency dependent)
constants controlling the
smallness of the mesh size are essentially those appearing in a conforming approximation.}

\begin{theorem}[\rev{Asymptotically optimal error estimate} and discrete well-posedness] \label{th:est_err}
Assume~\eqref{eq:def_rho}. The following holds:
\begin{equation}
(1-c_\gamma)\tnormss{\be}^2 \le (1+4\gdiv)\tnormss{\beeta}^2
+2\rho^{-1}\tnormss{\be} \min_{\bPhi\upc_h \in \Plbrot}
\tnormH{\bnu \ROTZ \bE-\bPhi\upc_h}.
\label{eq:err_est_simple}
\end{equation}
with $c_\gamma := 8\max(\gp^2,\rho^{-2}\gd^2) + \max(1,\rho^{-2})(\gdiv+3\gdiv^2)$.
Consequently, if the mesh size is small enough so that $c_\gamma<1$, 
the discrete problem~\eqref{eq:disc_pb} is well-posed.
\end{theorem}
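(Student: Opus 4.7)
The strategy is to combine the three preliminary bounds of Section~\ref{sec:a_priori} (Lemmas~\ref{lem:bnd_thet1}--\ref{lem:bnd_rot_theta}) with a carefully tuned Young's inequality, and then absorb the $\tnormss{\be}^2$-proportional terms into the left-hand side. The starting point is an orthogonal decomposition of the augmented norm: since $\btheta_\Pi\in\Hrotzrotz\subset\Hrotz$ forces $\bChz(\btheta_\Pi)=\bzero$ and, by~\eqref{eq:ker_sh}, $\tsc{\btheta_\Pi}=0$, the $\bL^2_\eps$-orthogonality of the pair $(\btheta_0,\btheta_\Pi)$ yields $\tnormss{\be}^2 = \omega^2\|\btheta_\Pi\|_\eps^2 + \tnormss{\btheta_0}^2$. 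Setting
\begin{equation*}
M \eqq \|\bChz(\bestb(\be))\|_\bnu^2 + \snormnc{\bestb(\be)}^2, \qquad D \eqq \min_{\bPhi\upc_h\in\Plbrot}\tnormH{\bnu\ROTZ\bE-\bPhi\upc_h},
\end{equation*}
and inserting the bound of Lemma~\ref{lem:bnd_rot_theta} produces
\begin{equation*}
\tnormss{\be}^2 \le \omega^2\|\btheta_\Pi\|_\eps^2 + 2\omega\|\btheta_\Pi\|_\eps\gdiv M^{\frac12} + \tnormss{(I-\bPi\upc_0)(\beeta)}^2 + 2\snormnc{\bestb(\be)}D + 4\omega^2\|\btheta_0\|_\eps^2.
\end{equation*}

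The crucial (and, I expect, most delicate) step is to extract the precise coefficient $(1+4\gdiv)$ in front of the best-approximation error from the first two terms above. My plan is to rewrite their sum as the perfect square $(\omega\|\btheta_\Pi\|_\eps + \gdiv M^{\frac12})^2 - \gdiv^2 M$, use Lemma~\ref{lem:bnd_thet2} and the triangle inequality to bound $\omega\|\btheta_\Pi\|_\eps + \gdiv M^{\frac12}$ by $\omega\|\bPi\upc_0(\beeta)\|_\eps + 2\gdiv M^{\frac12}$, and then apply Young's inequality $(a+b)^2 \le (1+\delta)a^2 + (1+\delta^{-1})b^2$ with the tuned parameter $\delta=4\gdiv$. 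A short calculation then gives
\begin{equation*}
\omega^2\|\btheta_\Pi\|_\eps^2 + 2\omega\|\btheta_\Pi\|_\eps\gdiv M^{\frac12} \le (1+4\gdiv)\omega^2\|\bPi\upc_0(\beeta)\|_\eps^2 + (\gdiv + 3\gdiv^2)M,
\end{equation*}
which exactly matches the announced constants. Since $\bPi\upc_0(\beeta)\in\Hrotzrotz$ has vanishing discrete curl and stabilization contribution, the $\bL^2_\eps$-orthogonal splitting of $\beeta$ further yields $(1+4\gdiv)\omega^2\|\bPi\upc_0(\beeta)\|_\eps^2 + \tnormss{(I-\bPi\upc_0)(\beeta)}^2 \le (1+4\gdiv)\tnormss{\beeta}^2$.

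To conclude, I would absorb the remaining $\tnormss{\be}^2$-proportional terms. Since $\bE\in\Hrotz$ implies $\snormnc{\be}=\snormnc{\bE_h}$ and $\tsc{\be}=\tsc{\bE_h}$, the assumption~\eqref{eq:def_rho} applied to $\bE_h\in\Pkb$ delivers $\snormnc{\be}\le\rho^{-1}\tsc{\be}$; squaring Lemma~\ref{lem:bnd_thet1} then yields $4\omega^2\|\btheta_0\|_\eps^2 \le 8\max(\gp^2,\rho^{-2}\gd^2)\tnormss{\be}^2$. Likewise, \eqref{eq:def_rho} applied to $\bestb(\be)\in\Pkb$ together with the stability~\eqref{eq:stab_best_h} of $\bestb$ give $M \le \max(1,\rho^{-2})\tnormss{\be}^2$ and $\snormnc{\bestb(\be)}\le\rho^{-1}\tnormss{\be}$. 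Collecting all these bounds produces~\eqref{eq:err_est_simple} with $c_\gamma$ as stated. Discrete well-posedness is then a one-line consequence: since~\eqref{eq:disc_pb} is a square linear system, existence reduces to uniqueness, and the choice $\bJ=\bzero$ gives $\bE=\bzero$ (hence $\beeta=\bzero$ and $D=0$), which reduces~\eqref{eq:err_est_simple} to $(1-c_\gamma)\tnormss{\be}^2\le 0$ and forces $\bE_h=\bzero$ whenever $c_\gamma<1$.
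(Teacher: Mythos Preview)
Your proof is correct and follows essentially the same approach as the paper: combine Lemmas~\ref{lem:bnd_thet1}--\ref{lem:bnd_rot_theta}, extract the factors $(1+4\gdiv)$ and $(\gdiv+3\gdiv^2)$ via Young's inequality, and absorb the remaining terms using~\eqref{eq:def_rho} and the stability~\eqref{eq:stab_best_h}. The only cosmetic difference is the bookkeeping for the Young step---the paper substitutes~\eqref{eq:bnd_thet2} into~\eqref{eq:bnd_rot_theta}, squares~\eqref{eq:bnd_thet2} separately, adds, and then applies $2ab\le a^2+b^2$ to the resulting cross term $4\omega\|\bPi\upc_0(\beeta)\|_\eps\gdiv M^{1/2}$, whereas you complete the square in $\omega\|\btheta_\Pi\|_\eps$ and apply Young with parameter $\delta=4\gdiv$---but both routes land on the identical intermediate bound with the same constants.
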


\begin{proof}
We use~\eqref{eq:bnd_thet2} in~\eqref{eq:bnd_rot_theta} to infer that
\begin{align*}
\tnormss{\btheta_0}^2 \le {}& \tnormss{(I-\bPi\upc_0)(\beeta)}^2 + 2\omega \|\bPi\upc_0(\beeta)\|_\eps \, \gdiv \big\{ \|\bChz(\bestb(\be))\|_\bnu^2 + 
\snormnc{\bestb(\be)}^2 \big\}^{\frac12} \\
&+2 \gdiv^2 \big\{ \|\bChz(\bestb(\be))\|_\bnu^2  + 
\snormnc{\bestb(\be)}^2 \big\} \\
&+2\snormnc{\bestb(\be)} \min_{\bPhi\upc_h \in \Plbrot}
\tnormH{\bnu \ROTZ \bE-\bPhi\upc_h} + 4 \omega^2 \|\btheta_0\|_\eps^2.
\end{align*}
We now square~\eqref{eq:bnd_thet2} and add the result to the above estimate.
Since
\begin{equation*}
\tnormss{\be}^2 = \tnormss{\btheta_0}^2 + \omega^2 \|\btheta_\Pi\|_\eps^2,
\qquad
\tnormss{\beeta}^2 = \tnormss{(I-\bPi\upc_0)(\beeta)}^2 +
\omega^2 \|\bPi\upc_0(\beeta)\|_\eps^2,
\end{equation*}
we obtain
\begin{align*}
\tnormss{\be}^2 \le {}& \tnormss{\beeta}^2 + 4\omega \|\bPi\upc_0(\beeta)\|_\eps \, \gdiv \big\{ \|\bChz(\bestb(\be))\|_\bnu^2 + 
\snormnc{\bestb(\be)}^2 \big\}^{\frac12}  \\
&+3\gdiv^2\big\{ \|\bChz(\bestb(\be))\|_\bnu^2 + 
\snormnc{\bestb(\be)}^2 \big\} \\
&+2\snormnc{\bestb(\be)} \min_{\bPhi\upc_h \in \Plbrot}
\tnormH{\bnu \ROTZ \bE-\bPhi\upc_h} + 4 \omega^2 \|\btheta_0\|_\eps^2.
\end{align*}
We deal with the second term on the right-hand side by Young's inequality. 
Since $\omega\|\bPi\upc_0(\beeta)\|_{\eps} \leq \omega\|\beeta\|_{\eps} \leq \tnormss{\beeta}$, this gives
\begin{align*}
\tnormss{\be}^2 \le {}& (1+4\gdiv)\tnormss{\beeta}^2 
+(\gdiv+3\gdiv^2)\big\{ \|\bChz(\bestb(\be))\|_\bnu^2 + 
\snormnc{\bestb(\be)}^2 \big\} \\
&+2\snormnc{\bestb(\be)} \min_{\bPhi\upc_h \in \Plbrot}
\tnormH{\bnu \ROTZ \bE-\bPhi\upc_h} + 4 \omega^2 \|\btheta_0\|_\eps^2.
\end{align*}
We invoke~\eqref{eq:bnd_thet1} to bound the last term on the right-hand side. This yields
\begin{align*}
\tnormss{\be}^2 \le {}& (1+4\gdiv)\tnormss{\beeta}^2 
+(\gdiv+3\gdiv^2)\big\{ \|\bChz(\bestb(\be))\|_\bnu^2 + 
\snormnc{\bestb(\be)}^2 \big\} \\
&+2\snormnc{\bestb(\be)} \min_{\bPhi\upc_h \in \Plbrot}
\tnormH{\bnu \ROTZ \bE-\bPhi\upc_h} + 8 \big(\gp^2 \tnorms{\be}^2 +
\gd^2 \snormnc{\be}^2\big).
\end{align*}
We observe that
\begin{align*}
\|\bChz(\bestb(\be))\|_\bnu^2 + 
\snormnc{\bestb(\be)}^2 &\le \max(1,\rho^{-2}) \big\{ \|\bChz(\bestb(\be))\|_\bnu^2 + 
\tsc{\bestb(\be)}^2 \big\} \\
&\le \max(1,\rho^{-2}) \tnormss{\bestb(\be)}^2 
\le \max(1,\rho^{-2}) \tnormss{\be}^2,
\end{align*}
where the last bound follows from~\eqref{eq:stab_best_h}. 
Moreover, we have 
\[
\gp^2 \tnorms{\be}^2 +
\gd^2 \snormnc{\be}^2\le \max(\gp^2,\rho^{-2}\gd^2)(\tnorms{\be}^2+\tsc{\be}^2)
= \max(\gp^2,\rho^{-2}\gd^2)\tnormss{\be}^2.
\] 
Combining the above bounds shows that
\[
(1-c_\gamma)\tnormss{\be}^2 \le (1+4\gdiv)\tnormss{\beeta}^2
+2\snormnc{\bestb(\be)} \min_{\bPhi\upc_h \in \Plbrot}
\tnormH{\bnu \ROTZ \bE-\bPhi\upc_h}.
\]
Since $\rho \snormnc{\bestb(\be)} \leq \tsc{\bestb(\be)}
\leq \tnormss{\bestb(\be)} \leq \tnormss{\be}$, this readily gives~\eqref{eq:err_est_simple}.
\end{proof}

\begin{remark}[Error estimate~\eqref{eq:err_est_simple}] \label{rem:err_est}
The last term on the right-hand side of~\eqref{eq:err_est_simple} stems from the 
weak consistency of the discrete formulation and somewhat pollutes the asymptotic
optimality of the a priori error estimate. We notice that this term can be made 
superconvergent already with the choice $\ell=k$ (provided $\bnu\ROTZ\bE$ is smooth
enough). The (slight) price to pay is to choose the stabilization factor $\eta_*$
large enough so that $s_\sharp$ is indeed positive semidefinite for $\ell=k$ 
(see~\eqref{eq:def_s_sharp}).
\end{remark}

\subsection{Inf-sup stability}
\label{sec:infsup}

Here, we establish the discrete inf-sup stability of 
the bilinear form $\bss$ on $\Pkb \times \Pkb$.
\rev{As for the error estimate from Theorem~\ref{th:est_err}, the main insight
is that the (frequency dependent) constants controlling the
smallness of the mesh size are essentially those appearing in a conforming approximation.
The inf-sup stability constant of the discrete problem also depends on the frequency
through the stability constant $\bst$ of the exact problem; again, this is the
same situation as for a conforming approximation.}

\begin{theorem}[Inf-sup stability]
\label{th:inf_sup}
Under assumption~\eqref{eq:def_rho}, we have
\begin{equation} \label{eq:inf_sup}
\min_{\substack{\bv_h \in \Pkb \\ \tnormss{\bv_h} = 1}}
\max_{\substack{\bw_h \in \Pkb \\ \tnormss{\bw_h} = 1}}
|\bss(\bv_h,\bw_h)|
\geq \frac{1-c'_\gamma}{1 + 2\bst},
\end{equation}
with $c'_\gamma \eqq 2\big(\gp + \tfrac12 \rho^{-1} \gd+\max(1,\rho^{-2}) \gdiv^2\big)$.
\end{theorem}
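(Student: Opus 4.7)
The plan is to adapt to the discrete setting the continuous Schatz-type duality argument behind Lemma~\ref{lem:infsup}. Given $\bv_h \in \Pkb$ with $\tnormss{\bv_h} = 1$, I would first perform the $\bL^2_\eps$-orthogonal decomposition $\bv_h = \bv_h^\parallel + \bv_h^\perp$, with $\bv_h^\parallel \eqq \bPi\upc_{h0}(\bv_h) \in \Pkbrotzrotz$ and $\bv_h^\perp \eqq \bv_h - \bv_h^\parallel \in \bX\upb_h$. Since $\bv_h^\parallel$ is $\Hrotz$-conforming and discretely curl-free (so both $\bChz$ and $s_\sharp$ annihilate it) and since the decomposition is $\bL^2_\eps$-orthogonal, a short computation will give
\[
\bss(\bv_h, -\bv_h^\parallel + \bv_h^\perp) = \tnormss{\bv_h}^2 - 2\omega^2\|\bv_h^\perp\|_\eps^2,
\]
which mirrors the continuous identity and leaves me with a negative mass term to compensate.

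I would then invoke the adjoint problem~\eqref{eq:adjoint} with data $\bxi \eqq (I-\bPi\upc_0)(\bv_h^\perp) \in \Hrotzrotz^\perp$, producing $\bzeta_\bxi \in \bX\upc_0$ with $\omega\tnorm{\bzeta_\bxi} \le \bst\|\bxi\|_\eps$, and choose the discrete test function
\[
\bw_h \eqq -\bv_h^\parallel + \bv_h^\perp + 2\omega^2\,\bestb(\bzeta_\bxi) \in \Pkb.
\]
The key observation is that using $\bestb(\bzeta_\bxi)$ rather than a best conforming approximation is crucial: the stability~\eqref{eq:stab_best_h} of $\bestb$ gives $\tnormss{2\omega^2\bestb(\bzeta_\bxi)} \le 2\omega\bst\|\bxi\|_\eps \le 2\bst\,\tnormss{\bv_h}$, so $\tnormss{\bw_h} \le (1+2\bst)\tnormss{\bv_h}$, exactly matching the denominator of~\eqref{eq:inf_sup} without any pollution by $\gp$.

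The core computation is then to expand $\bss(\bv_h, \bw_h)$. Combining the weak adjoint identity~\eqref{eq:GOd} with $\btheta = \bxi$, the orthogonality identity $(\bv_h, \bxi)_\eps = \|\bxi\|_\eps^2$, and the decomposition $\|\bv_h^\perp\|_\eps^2 = \|\bPi\upc_0(\bv_h^\perp)\|_\eps^2 + \|\bxi\|_\eps^2$, I would arrive at
\[
\bss(\bv_h,\bw_h) = \tnormss{\bv_h}^2 - 2\omega^2\|\bPi\upc_0(\bv_h^\perp)\|_\eps^2 - 2\omega^2\deltawkc(\bv_h,\bzeta_\bxi) - 2\omega^2\bss(\bv_h,\bzeta_\bxi-\bestb(\bzeta_\bxi)).
\]
I would then bound the three error terms separately: the first via~\eqref{eq_gamma_bX_DG} on $\bv_h^\perp \in \bX\upb_h$ combined with assumption~\eqref{eq:def_rho}, producing the $\max(1,\rho^{-2})\gdiv^2$ contribution; the second via Lemma~\ref{lem:weak_cons} combined with the definition of $\gd$ and~\eqref{eq:def_rho} to convert $\snormnc{\bv_h}$ into $\rho^{-1}\tsc{\bv_h}$, producing the $\rho^{-1}\gd$ contribution; and the third via the boundedness~\eqref{eq:bnd_bhs}, the minimization property of $\bestb$ with respect to $\tnormss{\cdot}$, and the definition of $\gp$, producing the $\gp$ contribution. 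Summing will yield $\bss(\bv_h,\bw_h) \ge (1-c'_\gamma)\tnormss{\bv_h}^2$, and dividing by $\tnormss{\bw_h}$ will give~\eqref{eq:inf_sup}.

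The main obstacle I anticipate is pinning down the exact constants of $c'_\gamma$, in particular routing the product $\snormnc{\bv_h}\cdot\omega\|\bxi\|_\eps$ in the weak-consistency term through~\eqref{eq:def_rho} with the correct $\tfrac12\rho^{-1}$ coefficient, which likely requires a carefully weighted Young inequality rather than a crude Cauchy--Schwarz. A secondary subtlety is justifying the splitting $\bss(\bv_h,\bestb(\bzeta_\bxi)) = \bss(\bv_h,\bzeta_\bxi) - \bss(\bv_h,\bzeta_\bxi-\bestb(\bzeta_\bxi))$, which is legitimate because the adjoint identity~\eqref{eq:GOd} holds for \emph{every} $\bw_h \in \Pkb$ and both summands on the right are well defined on $\bVsh$.
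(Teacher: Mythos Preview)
Your proposal is correct and follows essentially the same route as the paper's proof: the same $\bL^2_\eps$-orthogonal splitting $\bv_h=\bv_h^\parallel+\bv_h^\perp$, the same choice of adjoint data $(I-\bPi\upc_0)(\bv_h^\perp)$, the same test function structure $\bw_h=-\bv_h^\parallel+\bv_h^\perp+2(\text{projection of adjoint solution})$, and the same three error terms bounded via $\gdiv$, Lemma~\ref{lem:weak_cons}, and~$\gp$, with Young's inequality delivering the $\tfrac12\rho^{-1}\gd$ constant. The only (immaterial) difference is that the paper projects the adjoint solution with the \emph{conforming} best-approximation operator $\bestc:\Hrotz\to\Pkbrotz$ rather than your broken $\bestb$; both choices yield the same bounds, since $\tnormss{\bzeta_\bxi-\bestb(\bzeta_\bxi)}\le\min_{\bv_h\upc\in\Pkbrotz}\tnorm{\bzeta_\bxi-\bv_h\upc}$ and both projections preserve $\bL^2_\eps$-orthogonality to $\Pkbrotzrotz$.
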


\begin{proof}
Let $\bv_h \in \Pkb$. We build a suitable $\bw_h \in \Pkb$ so that
$\tnormss{\bw_h}\le (1 + 2\bst)\tnormss{\bv_h}$ and
$\bss(\bv_h,\bw_h) \ge (1-c'_\gamma)\tnormss{\bv_h}^2$.

(1) Set $\bv_{h0}\eqq (I-\bPi\upc_{h0})(\bv_h) \in \bX\upb_{h}$
and $\bv_{h\Pi}\eqq \bPi\upc_{h0}(\bv_h)\in \Pkbrotzrotz$, so that 
$\bv_h=\bv_{h0}+\bv_{h\Pi}$. We further decompose $\bv_{h0}$ as
$\bv_{h0} = \bphi_0+\bphi_\Pi$ with $\bphi_0\eqq (I-\bPi\upc_0)(\bv_{h0})$ and 
$\bphi_\Pi\eqq \bPi\upc_0(\bv_{h0})$. Let $\bxi_0\in \Hrotz$ be the unique adjoint 
solution such that $b(\bw,\bxi_0)=\omega^2(\bw,\bphi_0)_\eps$ for all $\bw\in \Hrotz$. 
We set $\bxi_{h0}\eqq \bestc(\bxi_0)$, where $\bestc:\Hrotz \to \Pkbrotz$ is uniquely defined
by requiring that $\bs^+(\bv-\bestc(\bv),\bw_h) = 0$, for all $\bv \in \Hrotz$ and all
$\bw_h\in \Pkbrotz$. Finally, we set 
\[
\bw_h \eqq \bv_{h0} + 2 \bxi_{h0} - \bv_{h\Pi} \in \Pkb.
\]

(2) Upper bound on $\tnormss{\bw_h}$. The same argument as in the proof of Lemma~\ref{lem:bestb} shows that $\tnorm{\bxi_{h0}} \le \tnorm{\bxi_0}$. Moreover, since $\bphi_0\in\Hrotzrotz^\perp$, we have
\[
\bst^{-1} \tnorm{\bxi_0} \le \omega\|\bphi_0\|_\eps \le \omega\|\bv_{h0}\|_\eps \le \omega\|\bv_{h}\|_\eps \le \tnorms{\bv_h} \le \tnormss{\bv_h}.
\]
This gives
\begin{align*}
\tnormss{\bw_h}^2 &= \tnormss{\bv_{h0}+2\bxi_{h0}}^2 + \omega^2\|\bv_{h\Pi}\|_\eps^2 
\le \big( \tnormss{\bv_{h0}} + 2 \tnorm{\bxi_{h0}} \big)^2 + \omega^2\|\bv_{h\Pi}\|_\eps^2\\
&\le (1+2\bst)^2 \tnormss{\bv_{h0}}^2 + \omega^2\|\bv_{h\Pi}\|_\eps^2 
\le (1+2\bst)^2 \tnormss{\bv_{h}}^2.
\end{align*}

(3) Lower bound on $\bss(\bv_h,\bw_h)$. We first observe that
\[
\bss(\bv_h,\bxi_{h0}) = \bss(\bv_{h0},\bxi_{h0})+\bss(\bv_{h\Pi},\bxi_{h0}) = \bss(\bv_{h0},\bxi_{h0}) \\
=\bss(\bv_{h0},\bxi_{0})+\bss(\bv_{h0},\bxi_{h0}-\bxi_0).
\] 
Owing to~\eqref{eq:GOd}, we have
\[
\bss(\bv_{h0},\bxi_{0}) = \omega^2 (\bv_{h0},\bphi_0)_\eps - \deltawkc(\bv_{h0},\bxi_0)
= \omega^2 \|\bphi_0\|_\eps^2 - \deltawkc(\bv_{h0},\bxi_0).
\]
Invoking Lemma~\ref{lem:weak_cons} and the definition~\eqref{eq:def_gd} 
of the approximation factor $\gd$ gives
\[
\bss(\bv_{h0},\bxi_{0}) \ge \omega^2 \|\bphi_0\|_\eps^2 - \snormnc{\bv_{h0}} \gd \omega
\|\bphi_0\|_\eps.
\]
Using the above bound on $\|\bphi_0\|_\eps$ together with $\snormnc{\bv_{h0}} = \snormnc{\bv_{h}}$ and assumption~\eqref{eq:def_rho}, we infer that
\begin{equation} \label{eq:lb_bsharp_1}
\bss(\bv_{h0},\bxi_{0}) \ge \omega^2 \|\bphi_0\|_\eps^2 - \rho^{-1} \gd \tsc{\bv_{h}}
\tnorms{\bv_h} \ge \omega^2 \|\bphi_0\|_\eps^2 - \tfrac12 \rho^{-1} \gd \tnormss{\bv_h}^2,
\end{equation}
where the last bound follows from Young's inequality.
Furthermore, using the definition~\eqref{eq:def_gp} 
of the approximation factor $\gp$ together with the boundedness property~\eqref{eq:bnd_bhs_Hrot} gives
\[
\bss(\bv_{h0},\bxi_{h0}-\bxi_0) \ge - \tnormss{\bv_h} \gp \omega \|\bphi_0\|_\eps
\ge - \gp \tnormss{\bv_h}^2.
\]
Combining this lower bound with~\eqref{eq:lb_bsharp_1}, we infer that
\begin{equation} \label{eq:lb_bsharp_2}
\bss(\bv_{h0},\bxi_{h0}) \ge \omega^2 \|\bphi_0\|_\eps^2
- \big( \gp + \tfrac12 \rho^{-1} \gd \big) \tnormss{\bv_h}^2.
\end{equation}
Furthermore, using the divergence conformity factor $\gdiv$ yields
\begin{align*}
\omega^2\|\bphi_\Pi\|_\eps^2 = \omega^2 \|\bPi\upc_0(\bv_{h0})\|_\eps^2 &\le \gdiv^2
\big\{ \|\bChz(\bv_{h0})\|_\bnu^2 + \snormnc{\bv_{h0}}^2\big\} \\
&\le \max(1,\rho^{-2}) \gdiv^2 \tnormss{\bv_{h0}}^2 
\le \max(1,\rho^{-2}) \gdiv^2 \tnormss{\bv_h}^2.
\end{align*}
Since $\|\bv_{h0}\|_\eps^2=\|\bphi_0\|_\eps^2+\|\bphi_\Pi\|_\eps^2$, combining this bound with~\eqref{eq:lb_bsharp_2} gives
\[
\bss(\bv_{h0},\bxi_{h0}) \ge \omega^2 \|\bv_{h0}\|_\eps^2 - \tfrac12 c'_\gamma \tnormss{\bv_h}^2.
\]
Finally, since $\bss(\bv_h,\bv_{h0}-\bv_{h\Pi})=\tnormss{\bv_h}^2 - 2\omega^2\|\bv_{h0}\|_\eps^2$,
we infer that
\[
\bss(\bv_h,\bw_h) = \bss(\bv_h,\bv_{h0} + 2 \bxi_{h0} - \bv_{h\Pi})
\ge (1-c'_\gamma)\tnormss{\bv_h}^2.
\]
This completes the proof.
\end{proof}

\begin{remark}[Discrete inf-sup constant] 
The discrete inf-sup constant appearing on the left-hand side of~\eqref{eq:inf_sup}
tends to $(1+2\bst)^{-1}$ as the mesh is refined, thus approaching, 
by up to a factor of two at most, the inf-sup constant from the continuous setting.
\end{remark}

\section{A posteriori residual-based error analysis}
\label{sec:a_post}

In this section, we estimate the error $\be:=\bE-\bE_h$ by means of local
residual-based quantities called error indicators. We derive both a global upper error
bound (reliability) and local lower error bounds (local efficiency). 
The only property required for the discrete object $\bE_h$
in the a posteriori error analysis
is to satisfy the Galerkin orthogonality \eqref{eq:GO} on conforming test functions,
i.e., $\bss(\bE-\bE_h,\bv_h\upc) =\bs(\bE-\bE_h,\bv_h\upc) = 0$ for all $\bv_h\upc\in \Pkbrotz$.
Lemma~\ref{lem:GO} shows that the dG solution solving~\eqref{eq:disc_pb} satisfies
this property. For simplicity, we keep the notation $\bE_h$ in this section.
\rev{For the a posteriori error analysis, we assume that $\DIV\bJ\in L^2(D)$ and 
that the material properties are piecewise constant on the mesh.}

\subsection{Notation \rev{and interpolation operators}}

For all $K \in \calT_h$, the element patch $\tK$ (resp., $\chK$, $\Kupf$) denotes the domain
covered by all the cells $K' \in \calT_h$ sharing at least one vertex (resp., edge, face) with
$K$. Similarly, the extended patch $\ttK$ (resp., $\tttK$) is the domain covered by all the
cells $K'' \in \calT_h$ sharing at least one vertex with a cell $K'\subset \tK$
(resp., $K'\subset \ttK$). For a face $F \in \calF_h$, $\tF$ is the domain covered by the one or
two cells sharing $F$. Whenever no confusion can arise, 
we also employ the symbols $\tK$, $\chK$, $\ttK$, $\tttK$, $\tF$ 
for the set of cells covering the domains.
We employ the symbol $\kappa_{\calT_h}$ for the shape-regularity parameter
of the mesh $\calT_h$, and $C(\kappa_{\calT_h})$ denotes any generic constant
solely depending on $\kappa_{\calT_h}$ and whose value can change at each occurrence.
For any subset $\calT \subset \calT_h$, we introduce the notation
\begin{equation}
\begin{aligned}
\varepsilon_{\max,\calT} &:= \max_{K\in\calT}
\max_{\bx \in K} \max_{\substack{\bu \in \mathbb R^d \\ |\bu| = 1}}
\max_{\substack{\bv \in \mathbb R^d \\ |\bv| = 1}}
\eps(\bx) \bu \cdot \bv,
\\
\varepsilon_{\min,\calT} &:= \min_{T\in\calT}
\min_{\bx \in K} \min_{\substack{\bu \in \mathbb R^d \\ |\bu| = 1}}
\eps(\bx) \bu \cdot \bu,
\end{aligned}
\end{equation}
and define $\nu_{\max,\calT}$ and $\nu_{\min,\calT}$ similarly. Then,
$\vel_\calT := (\nu_{\min,\calT}/\varepsilon_{\max,\calT})^{\frac12}$
stands for the minimum velocity in the subdomain covered by the cells
in $\calT$.  We write $\|v\|_{\calT}^2 := \sum_{K\in\calT} \|v\|_{L^2(K)}^2$
and employ a similar notation if $v$ is vector-valued. We also write
$\|v\|_{\calF}^2 := \sum_{F\in\calF} \|v\|_{L^2(F)}^2$
for every subset $\calF \subset \calF_h$. For simplicity, we assume that
$\ell\in\{k-1,k\}$ in the discrete curl operator and do not track the dependency
on $\ell$ of the constants.

We employ the quasi-interpolation operators from \cite{KarMe:15} (see also \cite{Melenk:05}
and see \cite[Corollary~2.5]{DongErn:24} for using the seminorm in the extended patch $\ttK$).
Specifically, there exists an operator
$\Ig: H^1_0(\Dom) \to \calP_{k+1}\upb(\calT_h) \cap H^1_0(\Dom)$
such that, for all $q \in H^1_0(\Dom)$ and all $K \in \calT_h$,
\begin{equation} \label{eq:inter_Ig}
\frac{k^2}{h_K^2}\|q-\Ig (q)\|_K^2
+
\frac{k}{h_K}\|q-\Ig (q)\|_{\partial K}^2
\leq
C(\kappa_{\calT_h}) \|\nabla q\|_{\ttK}^2.
\end{equation}
Similarly, there exists an operator $\bIc: \bH^1_0(\Dom) \to \Pkbrotz$
such that, for all $\bw \in \bH^1_0(\Dom)$ and all $K \in \calT_h$,
\begin{equation}
\label{eq:inter_Ic}
\frac{k^2}{h_K^2}\|\bw-\bIc (\bw)\|_K^2
+
\frac{k}{h_K}\|(\bw-\bIc (\bw)) {\times} \bn_K\|_{\partial K}^2
\leq
C(\kappa_{\calT_h}) \|\nabla \bw\|_{\ttK}^2.
\end{equation}

We will also need the quasi-interpolation averaging operator
$\calI_{h0}\upcav:\Pkb\to \Pkbrotz$ from~\cite{ErnGu:17_quasi} which is such that
there is $\cav \geq 1$ so that, for all $\bv_h\in \Pkb$ and all $K\in\calT_h$, 
\begin{multline}
\label{eq:approx_av}
\frac{k}{h_K} \|\bv_h-\calI_{h0}\upcav(\bv_h)\|_K + \|\ROT(\bv_h-\calI_{h0}\upcav(\bv_h))\|_K 
\le
\\
C(\kappa_{\calT_h}) \cav \bigg( \frac{k^2}{h_K}\bigg)^{\frac12} \bigg\{ \sum_{K'\in \chK} \|\jump{\bv_h}_{\partial K'}\upc\|_{\partial K'}^2\bigg \}^{\frac12}.
\end{multline}
The dependency of $\cav$ on $k$ has been explored in some specific cases for $d=2$.
\cite{BurEr:07,HoScW:07}. We keep this factor here as the analysis of the behavior
of $\cav$ in $k$ goes beyond the present scope. Invoking a discrete trace inequality
(see, e.g., \cite[Lem.~12.10]{EG_volI}) yields
\begin{equation}
\label{eq:bound_lifting}
\|\ROT \bv_h-\bChz (\bv_h)\|_K
=
\|\bLhz(\bv_h)\|_K
\leq
C(\kappa_{\calT_h}) \bigg( \frac{k^2}{h_K}\bigg)^{\frac12} \|\jump{\bv_h}\upc_{\partial K}\|_{\partial K}.
\end{equation}
Combining \eqref{eq:approx_av} and \eqref{eq:bound_lifting} gives
\begin{multline}
\label{eq:approx_av_gcurl}
\frac{k}{h_K} \|\bv_h-\calI_{h0}\upcav(\bv_h)\|_K + \|\bChz(\bv_h-\calI_{h0}\upcav(\bv_h))\|_K 
\le
\\
C(\kappa_{\calT_h}) \cav \bigg( \frac{k^2}{h_K}\bigg)^{\frac12}
\bigg\{ \sum_{K'\in \chK} \|\jump{\bv_h}_{\partial K'}\upc\|_{\partial K'}^2\bigg \}^{\frac12},
\end{multline}
and
\begin{multline}
\label{eq:approx_av_sharp}
\tnorm{\bv_h-\calI_{h0}\upcav(\bv_h)}_{\sharp}
\le
\\
C(\kappa_{\calT_h}) \cav  \left (
1 + \max_{K \in \calT_h} \frac{\omega h_K}{k\vel_{\min,\chK}}
\right ) \bigg\{ \sum_{K \in \calT_h} \nu_{\max,\chK}
\frac{k^2}{h_K} \|\jump{\bv_h}_{\partial K}\upc\|_{\partial K}^2 \bigg\}^{\frac12}.
\end{multline}

\begin{remark}[Broken curl]
In view of \eqref{eq:bound_lifting}, we can freely replace the discrete curl
$\bChz$ by the broken curl in the definition of the estimator $\eta$
and the error measure $\tnorm{{\cdot}}_{\dagger}$.
\end{remark}

\subsection{Estimator and error measure}

The a posteriori error estimator is written as the sum over the mesh cells
of local error indicators $\eta_K$ for all $K\in\calT_h$. The local error indicator 
consists of three pieces. The first two respectively measure the residuals of the
divergence constraint and of Maxwell's equations:
\begin{subequations}
\label{eq_def_eta}
\begin{equation}
\etadK^2
:=
\varepsilon_{\min,\tttK}^{-1}
\Big \{
\frac{h_K^2}{\omega^2 k^2}\|\DIV (\bJ+\omega^2\eps\bE_h)\|_K^2
+
\frac{\omega^2 h_K}{k}\|\jump{\eps\bE_h}_{\partial K}\upd\|_{\partial K \setminus \partial \Omega}^2
\Big \}, \\
\end{equation}
and
\begin{multline}
\etacK^2
:=
\nu_{\min,\tttK}^{-1} \Big \{
\frac{h_K^2}{k^2}\|\bJ+\omega^2 \eps \bE_h-\ROT (\bnu \bChz(\bE_h))\|_K^2
\\
+
\frac{h_K}{k}\|\jump{\bnu \bChz(\bE_h)}_{\partial K}\upc\|_{\partial K \setminus \partial \Omega}^2
\Big \},
\end{multline}
where $\jump{\eps\bE_h}_{\partial K}\upd|_F := \jump{\eps\bE_h}_F\upg\SCAL \bn_F$ and 
$\jump{\bnu \bChkz(\bE_h)}_{\partial K}\upc|_F := \jump{\bnu \bChkz(\bE_h)}_F\upg{\times}\bn_F$
for all $F\in\calF_K$.
The last part of the estimator controls the nonconformity of the discrete field $\bE_h$ as follows:
\begin{equation}
\label{eq_def_etajK}
\etajK^2
:= \rev{\cav}
\frac{\rev{\nu_{\max,\chK}k^2}}{h_K}\|\jump{\bE_h}_{\partial K}\upc\|_{\partial K}^2,
\end{equation}
\end{subequations}
where $\jump{\bE_h}_{\partial K}\upc|_F := \jump{\bE_h}_F\upg{\times}\bn_F$ for all $F\in\calF_K$.
For shortness, we also introduce the following notation:
\begin{equation}
\etaK^2 := \etadK^2+\etacK^2+\etajK^2,
\qquad
\eta_{\bullet}^2 := \sum_{K \in \calT_h} \eta_{\bullet,K}^2,
\qquad
\eta^2 := \sum_{K \in \calT_h} \etaK^2,
\end{equation}
with $\bullet\in\{\operatorname{div},\operatorname{curl},\operatorname{nc}\}$.

For all $\calT \subset \calT_h$, we define the error measure
\begin{align}
\tnorm{\be}_{\dagger,\calT}^2
:= {}&
\sum_{K \in \calT}
\left \{
\omega^2 \|\be\|_{\eps,K}^2
+
\|\bChz(\be)\|_{\bnu,K}^2 + \rev{\cav}
\frac{\rev{\nu_{\max,\chK}k^2}}{h_K}\|\jump{\be}_{\partial K}\upc\|_{\partial K}^2
\right \},
\end{align}
and we omit the subscript $\calT$ whenever $\calT = \calT_h$.
A crucial observation is that the last term in the norm measuring
the nonconformity can be chosen independently of the stabilization
in the dG scheme. In particular, it does not have to be large enough.

\subsection{Error upper bound (reliability)}

We start by controlling the PDE residual in Lemma \ref{lemma_pde_residual}.
Lemma \ref{lemma_pde_residual} is similar to \cite[Lemma 3.2]{chaumontfrelet_vega_2022a},
but the result proposed here is sharper. In particular, the constant only depends on the
shape-regularity of the mesh. Notice also that we consider here only conforming 
test functions so that we can work with the bilinear form $\bs$ rather than $\bss$.

\begin{lemma}[Residual]
\label{lemma_pde_residual}
For all $\bv \in \Hrotz$, we have
\begin{equation}
\label{eq_residual_Hcurl}
|\bs(\be,\bv)|
\leq
C(\kappa_{\calT_h}) \etadc \tnorm{\bv},
\end{equation}
with $\etadc^2 := \etad^2+\etac^2$.
\end{lemma}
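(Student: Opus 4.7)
My plan is to combine a residual-type rewriting of $\bs(\be,\bv)$, Galerkin orthogonality on conforming test functions, and a regular decomposition of $\bv$. Since $\bE,\bv\in\Hrotz$, identity~\eqref{eq:bh=b} and the weak formulation~\eqref{eq:weak} give $\bs(\bE,\bv)=b(\bE,\bv)=(\bJ,\bv)$, so that
\[
\bs(\be,\bv) = (\bJ+\omega^2\eps\bE_h,\bv) - (\bnu\bChz(\bE_h),\ROTZ\bv).
\]

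Next, I would decompose $\bv=\GRAD q+\bw$ via \cite[Theorem~1]{Schoberl:07}, with $q\in H^1_0(\Dom)$ and $\bw\in\bH^1_0(\Dom)$, chosen so that $\omega\|\nabla q\|_\eps$ and $\|\nabla\bw\|_\bnu$ are controlled by $\tnorm{\bv}$ uniformly in $\omega$. Since $\GRAD\Ig(q)+\bIc(\bw)\in\Pkbrotz$, Galerkin orthogonality (Lemma~\ref{lem:GO}, together with the fact that $s_\sharp$ vanishes on $\Hrotz$ so that $\bs(\be,\bv_h\upc)=\bss(\be,\bv_h\upc)=0$ for $\bv_h\upc\in\Pkbrotz$) lets me replace $\bv$ by $\GRAD(q-\Ig(q))+(\bw-\bIc(\bw))$ in the residual. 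The gradient contribution involves only the divergence residual, since $\ROTZ\GRAD=\bzero$; cellwise integration by parts produces the volume term $\DIV(\bJ+\omega^2\eps\bE_h)$ and, thanks to $\DIV\bJ\in L^2(\Dom)$, only the jumps $\jump{\eps\bE_h}\upd$ on interior faces. Cauchy--Schwarz with~\eqref{eq:inter_Ig} and the finite-overlap property of the patches $\ttK$ then yield a bound by $C(\kappa_{\calT_h})\etad\,\omega\|\nabla q\|_\eps\le C(\kappa_{\calT_h})\etad\tnorm{\bv}$. The $\bw$-contribution is handled analogously: cellwise integration by parts on the curl term produces the Maxwell residual $\bJ+\omega^2\eps\bE_h-\ROT(\bnu\bChz(\bE_h))$ and the tangential jumps $\jump{\bnu\bChz(\bE_h)}\upc$, and Cauchy--Schwarz with~\eqref{eq:inter_Ic} gives $C(\kappa_{\calT_h})\etac\|\nabla\bw\|_\bnu\le C(\kappa_{\calT_h})\etac\tnorm{\bv}$. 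Summing the two contributions yields~\eqref{eq_residual_Hcurl}.

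The main obstacle is ensuring that the constant is purely shape-regularity-dependent, and in particular does not degenerate in the low-frequency regime. This is precisely where Schöberl's decomposition is essential: unlike the Hiptmair--Pechstein decomposition used in \cite{chaumontfrelet_vega_2022a}, it delivers the required scalings of $\omega\|\nabla q\|_\eps$ and $\|\nabla\bw\|_\bnu$ against $\tnorm{\bv}$ uniformly in $\omega$, which in turn is only usable thanks to the fact that Galerkin orthogonality is available against the corresponding conforming interpolants. The remaining details are mostly bookkeeping: the factors $\varepsilon_{\min,\tttK}^{-1}$ and $\nu_{\min,\tttK}^{-1}$ appearing in the definitions of $\etad$ and $\etac$ are precisely those needed to transfer the unweighted $L^2$-bounds from~\eqref{eq:inter_Ig}--\eqref{eq:inter_Ic} into $\eps$- and $\bnu$-weighted norms on the extended patches $\ttK$.
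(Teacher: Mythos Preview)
Your approach matches the paper's: the same residual rewriting, Galerkin orthogonality on conforming test functions, regular decomposition via \cite[Theorem~1]{Schoberl:07}, subtraction of conforming interpolants, cellwise integration by parts, and the interpolation bounds~\eqref{eq:inter_Ig}--\eqref{eq:inter_Ic}. One point to tighten: Sch\"oberl's theorem does not decompose $\bv$ itself as $\GRAD q+\bw$; it produces a conforming quasi-interpolant $\bS_{h0}\upc(\bv)\in\Pkbrotz$ and decomposes the \emph{remainder} $\bv-\bS_{h0}\upc(\bv)=\GRAD q+\bphi$ with \emph{local} bounds $\|\GRAD q\|_K\lesssim\|\bv\|_{\tK}$ and $\|\GRAD\bphi\|_K\lesssim\|\ROTZ\bv\|_{\tK}$. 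The interpolant $\bS_{h0}\upc(\bv)$ is also absorbed by Galerkin orthogonality, so the mechanics are unchanged, but it is the locality of these bounds that makes the final constant depend only on shape-regularity: after applying~\eqref{eq:inter_Ig} one lands on $\|\GRAD q\|_{\ttK}$, and Sch\"oberl's local estimate transfers this directly to $\|\bv\|_{\tttK}\le\varepsilon_{\min,\tttK}^{-1/2}\|\bv\|_{\eps,\tttK}$ (and similarly for the curl part). This is the origin of the extended patches $\tttK$ in the estimator weights, and it is more direct than routing through the global quantities $\omega\|\GRAD q\|_\eps$ and $\|\GRAD\bw\|_\bnu$ that you mention, which---if read literally as coming from a global decomposition of $\bv$---would carry domain-dependent rather than purely mesh-dependent constants.
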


\begin{proof}
Here, we invoke \cite[Theorem 1]{Schoberl:07}, which states that, given any
$\bw \in \Hrotz$, there exists $\bS_{h0}\upc(\bw) \in \Pkbrotz$, such that
\begin{equation}
\label{tmp_decomposition_schoberl}
\bw-\bS_{h0}\upc(\bw) = \GRAD q + \bphi,
\end{equation}
with $q \in H^1_0(\Dom)$, $\bphi \in \bH^1_0(\Dom)$ such that, for all $K \in \calT_h$,
\begin{equation}
\label{eq:stab_Sch}
\begin{aligned}
h_K^{-1} \|q\|_K + \|\GRAD q\|_K
&\leq
C(\kappa_{\calT_h}) \|\bw\|_{\tK},
\\
h_K^{-1} \|\bphi\|_K + \|\GRAD \bphi\|_K
&\leq
C(\kappa_{\calT_h}) \|\ROTZ \bw\|_{\tK}.
\end{aligned}
\end{equation}

We now pick an arbitrary test function $\bv \in \Hrotz$. We have
\begin{equation*}
\bs(\be,\bv) = \bs(\be,\bv-\bS_{h0}\upc(\bv)) = \bs(\be,\GRAD q+\bphi),
\end{equation*}
where $q$ and $\bphi$ are the components of the decomposition in \eqref{tmp_decomposition_schoberl}.
We then estimate separately the two parts of the residual associated with the decomposition.

For the gradient part, we write
\begin{align*}
\bs(\be,\GRAD q)
&=
\bs(\be,\GRAD (q-\Ig (q)))
\\
&=
-\omega^2 (\eps\be,\GRAD (q-\Ig (q)))
\\
&=
\sum_{K \in \calT_h}
\omega^2 (\DIV (\eps\be),q-\Ig (q))_K
-
\sum_{F \in \calF_h^{\rm int}}
\omega^2 (\jump{\eps(\be)}_F\upd,q-\Ig (q))_F
\\
&=
\sum_{K \in \calT_h}
- (\DIV (\bJ + \omega^2 \eps \bE_h),q-\Ig (q))_K
+
\sum_{F \in \calF_h^{\rm int}}
\omega^2 (\jump{\eps\bE_h}_F\upd,q-\Ig (q))_F
\\
&\leq
\sum_{K \in \calT_h}
\left \{
\|\DIV (\bJ + \omega^2 \eps \bE_h)\|_K \|q-\Ig (q)\|_K
+
\omega^2 \|\jump{\eps\bE_h}_{\partial K}\upd\|_{\partial K \setminus \partial \Omega}
\|q-\Ig (q)\|_{\partial K\setminus \partial \Omega}
\right \}
\\
&\leq
\sum_{K \in \calT_h}
\etadK
\varepsilon_{\min,\tttK}^{\frac12}
\omega \left \{
\frac{k}{h_K} \|q-\Ig (q)\|_K
+
\bigg( \frac{k^2}{h_K}\bigg)^{\frac12} \|q-\Ig (q)\|_{\partial K}
\right \}.
\end{align*}
For all $K \in \calT_h$, invoking~\eqref{eq:inter_Ig} and \eqref{eq:stab_Sch}, we have
\begin{align*}
\frac{k}{h_K} \|q-\Ig (q)\|_K
+
\sqrt{\frac{k}{h_K}} \|q-\Ig (q)\|_{\partial K}
&\leq
C(\kappa_{\calT_h}) \|\nabla q\|_{\ttK} \\
&\leq
C(\kappa_{\calT_h}) \|\bv\|_{\tttK}
\leq
C(\kappa_{\calT_h})\varepsilon_{\min,\tttK}^{-\frac12} \|\bv\|_{\eps,\tttK}.
\end{align*}
Summing over $K \in \calT_h$ and since the number of overlaps is uniformly controlled by $\kappa_{\calT_h}$, we obtain
\begin{equation} \label{eq:bnd_apost_1}
|\bs(\be,\GRAD q)|
\leq
C(\kappa_{\calT_h}) \etad \omega \|\bv\|_\eps.
\end{equation}

For the $\bH^1_0(\Dom)$-part, proceeding similarly gives
\begin{align*}
\bs(\be,\bphi)
={}&
\bs(\be,\bphi-\bIc(\bphi))
\\
={}&
(\bJ+\omega^2\eps\bE_h,\bphi-\bIc(\bphi)) - (\bChz(\bE_h),\ROTZ(\bphi-\bIc(\bphi)))_{\bnu}
\\
={}&
\sum_{K \in \calT_h}
(\bJ+\omega^2\eps\bE_h-\ROT(\bnu\bChz(\bE_h)),\bphi-\bIc(\bphi))_K
\\ &-
\sum_{F \in \calF_h^{\rm int}} (\jump{\bnu\bChz(\bE_h)}_F\upc,\bphi-\bIc(\bphi))_F
\\
\leq{}&
\sum_{K \in \calT_h}
\etacK \nu_{\min,\tttK}^{\frac12}
\left \{
\frac{k}{h_K}\|\bphi-\bIc(\bphi)\|_K
+ \bigg( \frac{k^2}{h_K}\bigg)^{\frac12}
\|(\bphi-\bIc(\bphi)) {\times} \bn\|_{\partial K\setminus\partial\Omega}
\right \}
\\
\leq{}&
C(\kappa_{\calT_h})
\sum_{K \in \calT_h} \etacK \nu_{\min,\tttK}^{\frac12} \|\nabla \bphi\|_{\ttK}
\\
\leq{}&
C(\kappa_{\calT_h})
\sum_{K \in \calT_h} \etacK \nu_{\min,\tttK}^{\frac12} \|\ROTZ \bv\|_{\tttK}
\\
\leq{}&
C(\kappa_{\calT_h})
\sum_{K \in \calT_h} \etacK \|\ROTZ \bv\|_{\bnu,\tttK},
\end{align*}
so that
\begin{equation} \label{eq:bnd_apost_2}
|\bs(\be,\bphi)| \leq C(\kappa_{\calT_h}) \etac \|\ROTZ \bv\|_{\bnu}.
\end{equation}
Combining \eqref{eq:bnd_apost_1} and \eqref{eq:bnd_apost_2} concludes the proof.
\end{proof}

The next step is an Aubin--Nitsche-type duality argument
to estimate the $\bL_{\eps}^2$-norm of the error. Here, the weak consistency
estimate from Lemma~\ref{lem:weak_cons} is crucial to treat the nonconformity of the dG solution.

\begin{lemma}[$\bL_\eps^2$-norm reliability estimate] \label{lem:L2_residual}
We have
\begin{equation}
\label{eq_reliability_L2}
\omega \|\be\|_\eps \leq C(\kappa_{\calT_h})(1+\gp+\gd) \eta.
\end{equation}
\end{lemma}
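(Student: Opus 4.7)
The plan is to run an Aubin--Nitsche-type duality argument on the orthogonal splitting $\be=\btheta_0+\btheta_\Pi$ from~\eqref{eq:def_e_theta}, with $\btheta_0=(I-\bPi\upc_0)(\be)\in\Hrotzrotz^\perp$ and $\btheta_\Pi=\bPi\upc_0(\be)\in\Hrotzrotz$. Since these components are $\bL^2_\eps$-orthogonal, $\omega^2\|\be\|_\eps^2=\omega^2\|\btheta_0\|_\eps^2+\omega^2\|\btheta_\Pi\|_\eps^2$, so it suffices to bound $\omega\|\btheta_0\|_\eps$ and $\omega\|\btheta_\Pi\|_\eps$ separately.

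For $\btheta_0$, I would take $\bzeta\eqq\omega^2\bzeta_{\btheta_0}\in\Hrotz$, where $\bzeta_{\btheta_0}$ solves the adjoint problem~\eqref{eq:adjoint} with data $\btheta_0$, so that $b(\bw,\bzeta)=\omega^2(\bw,\btheta_0)_\eps$ for all $\bw\in\Hrotz$. Testing this identity with $\bw=\bE$, exploiting the primal equation through $b(\bE,\bzeta)=(\bJ,\bzeta)$, and rewriting the discrete contribution via the weak consistency functional~\eqref{eq:def_adj_error} (legitimate since the strong form of the adjoint yields $\bnu\ROTZ\bzeta\in\Hrot$), I expect to land on the key identity $\omega^2\|\btheta_0\|_\eps^2 = \bs(\be,\bzeta)-\deltawkc(\bE_h,\bzeta)$, using also $\deltawkc(\bE,\bzeta)=0$ on the conforming side. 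Galerkin orthogonality on $\Pkbrotz$ transfers from $\bss$ to $\bs$ thanks to~\eqref{eq:ker_sh}, so an arbitrary $\bv_h\upc\in\Pkbrotz$ may be subtracted from $\bzeta$ in the first term; Lemma~\ref{lemma_pde_residual} combined with the definition~\eqref{eq:def_gp} of $\gp$ (after the $\omega^2$ rescaling of the normalised adjoint datum $\btheta_0/\|\btheta_0\|_\eps$) then yields $|\bs(\be,\bzeta)|\le C(\kappa_{\calT_h})\etadc\,\omega\gp\|\btheta_0\|_\eps$. Likewise, Lemma~\ref{lem:weak_cons}, the definition~\eqref{eq:def_gd} of $\gd$, and the averaging-operator bound~\eqref{eq:approx_av_gcurl} controlling $\snormnc{\bE_h}$ by $\etanc$ deliver $|\deltawkc(\bE_h,\bzeta)|\le C(\kappa_{\calT_h})\etanc\,\omega\gd\|\btheta_0\|_\eps$. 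Dividing by $\omega\|\btheta_0\|_\eps$ produces $\omega\|\btheta_0\|_\eps\le C(\kappa_{\calT_h})(\gp+\gd)\,\eta$.

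For $\btheta_\Pi\in\Hrotzrotz$, I would exploit its curl-free nature to bring out the divergence residual $\etad$. A regular decomposition in the spirit of~\cite[Theorem~1]{Schoberl:07} (already used in Lemma~\ref{lemma_pde_residual}) represents $\btheta_\Pi$ through a scalar potential $p\in H^1_0(\Dom)$ (modulo a finite-dimensional cohomology contribution treated identically when $\front$ is disconnected), with $\|\nabla p\|_\eps\lesssim\|\btheta_\Pi\|_\eps$ up to shape-regularity. I would then rewrite $\omega^2\|\btheta_\Pi\|_\eps^2=\omega^2(\be,\nabla p)_\eps$ by using, on the continuous side, the divergence of the primal equation $\omega^2\DIV(\eps\bE)=-\DIV\bJ$, and on the discrete side, Galerkin orthogonality against the conforming test $\nabla\Ig p\in\Hrotz$ (whose curl vanishes, so both the curl term of $\bs$ and $s_\sharp$ drop). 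A cellwise integration by parts then converts the resulting expression into a sum of volume and interface divergence residuals paired with $p-\Ig p$, which is controlled by $\etad\,\|\nabla p\|_\eps$ via~\eqref{eq:inter_Ig} and Cauchy--Schwarz, yielding $\omega\|\btheta_\Pi\|_\eps\le C(\kappa_{\calT_h})\,\etad$ and hence the claim.

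The main obstacle I anticipate is the clean coupling between the bilinear form $\bs$, built on the discrete curl $\bChz$, and the strong form of the adjoint equation, built on the standard curl: integrating by parts the curl term of the adjoint generates precisely the weak consistency defect $\deltawkc(\bE_h,\bzeta)$, and converting this nonconformity contribution into the local estimator piece $\etanc$ requires invoking the $\Hrotz$-conforming averaging operator through~\eqref{eq:approx_av_gcurl}. Secondary technical points are the $\omega^2$-rescaling of $\bzeta$ needed to bring in the unit-normalised approximation factors $\gp$ and $\gd$, and the handling of the cohomology piece of $\btheta_\Pi$ when the topology of $\Dom$ is nontrivial, which parallels the Schöberl-decomposition argument used in Lemma~\ref{lemma_pde_residual}.
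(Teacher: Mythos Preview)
Your treatment of $\btheta_0$ is correct and matches the paper's proof essentially line by line: the identity $\omega^2\|\btheta_0\|_\eps^2=\bs(\be,\bzeta)-\deltawkc(\bE_h,\bzeta)$, Galerkin orthogonality on conforming test functions, Lemma~\ref{lemma_pde_residual} combined with~\eqref{eq:def_gp}, and Lemma~\ref{lem:weak_cons} combined with~\eqref{eq:def_gd} and~\eqref{eq:approx_av_gcurl}. The cosmetic rescaling $\bzeta=\omega^2\bzeta_{\btheta_0}$ makes no difference.

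Your treatment of $\btheta_\Pi$, however, is substantially more convoluted than the paper's and leaves a loose end. The paper simply notes that $\btheta_\Pi\in\Hrotzrotz\subset\Hrotz$ with $\ROTZ\btheta_\Pi=\bzero$, so $\bs(\be,\btheta_\Pi)=-\omega^2(\be,\btheta_\Pi)_\eps=-\omega^2\|\btheta_\Pi\|_\eps^2$, and applies Lemma~\ref{lemma_pde_residual} \emph{directly} with $\bv=\btheta_\Pi$ to obtain
\[
\omega^2\|\btheta_\Pi\|_\eps^2=-\bs(\be,\btheta_\Pi)\le C(\kappa_{\calT_h})\,\etadc\,\tnorm{\btheta_\Pi}=C(\kappa_{\calT_h})\,\etadc\,\omega\|\btheta_\Pi\|_\eps.
\]
This one-line argument works regardless of the topology of $\Dom$. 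By contrast, you propose to decompose $\btheta_\Pi$ as a gradient plus a cohomology piece and then essentially re-derive the gradient part of the proof of Lemma~\ref{lemma_pde_residual}. This is redundant, and your claim that the cohomology contribution is ``treated identically'' is not substantiated: harmonic fields are not gradients, so the divergence-residual integration-by-parts argument does not apply to them. The cleanest fix is precisely to invoke Lemma~\ref{lemma_pde_residual} on that piece---at which point you may as well invoke it on all of $\btheta_\Pi$ and drop the decomposition.
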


\begin{proof}
Recall the $\bL^2_\eps$-orthogonal decomposition 
$\be = \btheta_0 + \btheta_\Pi$ with $\btheta_0 \in \Hrotzrotz^\perp$ and
$\btheta_\Pi \in \Hrotzrotz$ (see~\eqref{eq:def_e_theta}).

For the first component, recalling~\eqref{eq:theta_eps_sq} and using~\eqref{eq:GOd}, we have
\begin{equation}
\label{tmp_theta_zero_1}
\omega \|\btheta_0\|_\eps^2
=
\omega (\be,\btheta_0)_\eps
=
\omega \bs(\be,\bzeta_{\btheta})
-
\omega \deltawkc(\bE_h,\bzeta_{\btheta}).
\end{equation}
Since $\bE_h$ satisfies the Galerkin orthogonality for conforming test functions and 
invoking the bound~\eqref{eq_residual_Hcurl} established 
in Lemma~\ref{lemma_pde_residual}, we have, for all $\bv_h\upc \in \Pkbrotz$,
\begin{equation}
\label{tmp_theta_zero_2}
\omega \bs(\be,\bzeta_{\btheta})
=
\omega \bs(\be,\bzeta_{\btheta}-\bv_h\upc)
\leq
C(\kappa_{\calT_h}) \etadc \omega \tnorm{\bzeta_{\btheta}-\bv_h\upc}
\leq
C(\kappa_{\calT_h}) \gp \etadc\|\btheta_0\|_\eps,
\end{equation}
where we used the definition \eqref{eq:def_gp} of $\gp$ in the last inequality (since $\bv_h\upc$ is arbitrary in $\Pkbrotz$).
Moreover, owing to the estimate \eqref{eq:adj_cons} from Lemma~\ref{lem:weak_cons},
we have
\begin{equation*}
\omega |\deltawkc(\bE_h,\bzeta_{\btheta})|
\leq
\snormnc{\bE_h} \gd \|\btheta_0\|_\eps.
\end{equation*}
Recalling the definition~\eqref{eq:def_JMP} of the $\snormnc{\SCAL}$-seminorm and using~\eqref{eq:approx_av_gcurl}, we infer that
\[
\snormnc{\bE_h} \le \tnormE{\bE_h-\calI_{h0}\upcav(\bE_h)} \le 
C(\kappa_{\calT_h}) \etaj.
\]
This gives
\begin{equation} \label{tmp_theta_zero_3}
\omega |\deltawkc(\bE_h,\bzeta_{\btheta})|
\leq C(\kappa_{\calT_h}) \etaj \gd \|\btheta_0\|_\eps.
\end{equation}
Combining \eqref{tmp_theta_zero_1}, \eqref{tmp_theta_zero_2} and \eqref{tmp_theta_zero_3},
we arrive at
\begin{equation}
\label{tmp_apost_L2_theta_zero}
\omega \|\btheta_0\|_\eps
\leq
C(\kappa_{\calT_h}) (\gp+\gd) \eta.
\end{equation}

For the other part of the error, since $\btheta_\Pi \in \Hrotzrotz$,
we can use \eqref{eq_residual_Hcurl} to write
\begin{equation}
\label{tmp_apost_L2_theta_pi}
\omega^2 \|\btheta_\Pi\|_\eps^2
=
-\bs(\be,\btheta_\Pi)
\leq
C(\kappa_{\calT_h}) \etadc
\tnorm{\btheta_\Pi}
=
C(\kappa_{\calT_h}) \etadc \omega
\|\btheta_\Pi\|_\eps.
\end{equation}
Combining \eqref{tmp_apost_L2_theta_zero} and \eqref{tmp_apost_L2_theta_pi} proves \eqref{eq_reliability_L2}.
\end{proof}

We are now ready to establish a reliability estimate with an argument similar to
the one used in \cite{Chaumont:23_ipdghelmholtz} for the scalar Helmholtz problem.

\begin{theorem}[Reliability] \label{th:reliability}
We have
\begin{equation}
\label{eq_reliability}
\tnorm{\be}_{\dagger}
\leq
C(\kappa_{\calT_h})
\left (
1 + \max_{K \in \calT_h} \frac{\omega h_K}{k\vel_{\min,\chK}} + \gp+\gd
\right )\eta.
\end{equation}
\end{theorem}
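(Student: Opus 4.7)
The plan is to bound separately the three contributions that make up $\tnorm{\be}_{\dagger}^2$. The $\bL^2_\eps$-part $\omega\|\be\|_\eps$ is controlled directly by Lemma~\ref{lem:L2_residual}. Because $\bE\in\Hrotz$, the tangential jumps of $\be$ coincide with those of $-\bE_h$, so the nonconformity contribution to $\tnorm{\be}_\dagger^2$ is exactly $\etaj^2$. The remaining and principal task is to bound $\|\bChz(\be)\|_\bnu$ by the right-hand side of~\eqref{eq_reliability}.

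To this end, I would introduce the conforming averaging $\bE_h\upc\eqq\calI\upcav_{h0}(\bE_h)\in\Pkbrotz$ and split $\be=\be\upc+\be\upnc$ with $\be\upc\eqq\bE-\bE_h\upc\in\Hrotz$ and $\be\upnc\eqq\bE_h\upc-\bE_h\in\Pkb$. Since $\bChz(\be\upc)=\ROTZ\be\upc$, the triangle inequality reduces the task to bounding $\|\ROTZ\be\upc\|_\bnu$ and $\|\bChz(\be\upnc)\|_\bnu$. The nonconforming piece is treated by~\eqref{eq:approx_av_sharp}, which yields
\[
\tnorms{\be\upnc}\le C(\kappa_{\calT_h})\Bigl(1+\max_{K\in\calT_h}\frac{\omega h_K}{k\vel_{\min,\chK}}\Bigr)\etaj,
\]
and this is precisely where the nonstandard factor in~\eqref{eq_reliability} enters the estimate; it simultaneously controls $\|\bChz(\be\upnc)\|_\bnu$ and $\omega\|\be\upnc\|_\eps$.

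For the conforming piece, I would test the bilinear form $\bs$ against $\be\upc\in\Hrotz$ and expand $\bChz(\be)=\ROTZ\be\upc+\bChz(\be\upnc)$ to obtain
\[
\|\ROTZ\be\upc\|_\bnu^2 = \bs(\be,\be\upc) + \omega^2(\be,\be\upc)_\eps - (\bChz(\be\upnc),\ROTZ\be\upc)_\bnu.
\]
The first term is bounded by $C(\kappa_{\calT_h})\etadc\tnorm{\be\upc}$ through Lemma~\ref{lemma_pde_residual}, using the Galerkin orthogonality on conforming test functions; the second by Cauchy--Schwarz, Lemma~\ref{lem:L2_residual}, and $\omega\|\be\upc\|_\eps\le\omega\|\be\|_\eps+\omega\|\be\upnc\|_\eps$; the third by Cauchy--Schwarz and the bound above on $\tnorms{\be\upnc}$. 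Expanding $\tnorm{\be\upc}^2=\omega^2\|\be\upc\|_\eps^2+\|\ROTZ\be\upc\|_\bnu^2$ and applying Young's inequality to absorb $\|\ROTZ\be\upc\|_\bnu$ on the left-hand side then yields the desired control.

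The main obstacle beyond careful bookkeeping is indeed the appearance of the factor $1+\max_K \omega h_K/(k\vel_{\min,\chK})$: converting the pure jump control delivered by $\etaj$ into a control of the $\omega$-weighted $\bL_\eps^2$-norm of $\be\upnc$ cannot be done uniformly in the frequency, and the bound~\eqref{eq:approx_av_sharp} encodes exactly this conversion. Once this estimate is used, the structure of the remainder of the proof is dictated by the two conforming tools already at hand, namely Lemma~\ref{lem:L2_residual} and Lemma~\ref{lemma_pde_residual}.
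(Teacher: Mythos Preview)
Your proof is correct, but it takes a structurally different route from the paper. The paper introduces the $\bs^+$-orthogonal projection $\Bestc:\bVsh\to\Hrotz$ and exploits the resulting Pythagorean identity
\[
\tnorms{\be}^2 = \tnorms{\bE-\Bestc(\bE_h)}^2 + \tnorms{\bE_h-\Bestc(\bE_h)}^2.
\]
The conforming piece $\bE-\Bestc(\bE_h)\in\Hrotz$ is handled via the identity $\bs^+=\bs+2\omega^2(\cdot,\cdot)_\eps$ together with Lemma~\ref{lemma_pde_residual} and Lemma~\ref{lem:L2_residual}; the nonconforming piece uses the minimality $\tnorms{\bE_h-\Bestc(\bE_h)}\le\tnorms{\bE_h-\calI\upcav_{h0}(\bE_h)}$ followed by~\eqref{eq:approx_av_sharp}. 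You instead split directly with $\calI\upcav_{h0}(\bE_h)$, which avoids introducing the abstract projection $\Bestc$ but forces you to deal with the cross term $(\bChz(\be\upnc),\ROTZ\be\upc)_\bnu$ and to control $\omega\|\be\upc\|_\eps$ separately via the triangle inequality. The paper's orthogonal decomposition eliminates these cross terms and is slightly cleaner; your argument is more elementary and stays closer to concrete operators. Both deliver the same bound with the same dependence on $\gp$, $\gd$, and $\max_K \omega h_K/(k\vel_{\min,\chK})$. One small remark: when you invoke Lemma~\ref{lemma_pde_residual} on $\bs(\be,\be\upc)$, the Galerkin orthogonality on conforming test functions is already built into that lemma, so there is no need to cite it again.
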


\begin{proof}
Since $\tnorm{\be}_{\dagger}^2
= \tnorms{\be}^2 + \etaj^2$, we only need to estimate $\tnorms{\be}^2$. 
To this purpose, recall that the bilinear form $\bs^+$ defined in~\eqref{eq:def_bs_+}
is the inner product associated with the $\tnorms{{\cdot}}$-norm. We introduce the 
$\Hrotz$-conforming projection $\Bestc:\bVsh\to \Hrotz$ such that 
$\bs^+(\bv-\Bestc(\bv),\bw)=$ for all $\bv\in \bVsh$ and all $\bw\in\Hrotz$.
Reasoning as in the proof of Lemma~\ref{lem:bestb} proves 
the following Pythagorean identity:
\begin{equation} \label{eq:Pyth_error}
\tnorms{\be}^2 = \tnorms{\bE-\Bestc(\bE_h)}^2 + \tnorms{\bE_h-\Bestc(\bE_h)}^2.
\end{equation}
We estimate separately the two terms on the right-hand side.

For the first term, we observe that $\Bestc(\bE)=\bE$ and $\bE-\Bestc(\bE_h) \in \Hrotz$, and write that
\begin{align*}
\tnorms{\bE-\Bestc(\bE_h)}^2
&=
\bs^+(\bE-\Bestc(\bE_h),\bE-\Bestc(\bE_h)) \\
&= \bs^+(\Bestc(\be),\bE-\Bestc(\bE_h)) \\
&=
\bs^+(\be,\bE-\Bestc(\bE_h))
\\
&=
\bs(\be,\bE-\Bestc(\bE_h))+2\omega^2(\be,\bE-\Bestc(\bE_h))_\eps.
\end{align*}
Since the second argument in the first term on the right-hand side
is conforming, this term can be estimated by means of the estimate
\eqref{eq_residual_Hcurl} from Lemma~\ref{lemma_pde_residual}.
This gives
\begin{equation*}
|\bs(\be,\bE-\Bestc(\bE_h))|
\leq
C(\kappa_{\calT_h})\etadc \tnorm{\bE-\Bestc(\bE_h)}.
\end{equation*}
We apply the Cauchy--Schwarz inequality to bound the second term, leading to
\begin{equation*}
\omega^2|(\be,\bE-\Bestc(\bE_h))_\eps|
\leq
\omega\|\be\|_\eps\omega \|\bE-\Bestc(\bE_h)\|_\eps
\leq
\omega\|\be\|_\eps \tnorm{\bE-\Bestc(\bE_h)}.
\end{equation*}
This yields the following estimate:
\begin{equation*}
\tnorms{\bE-\Bestc(\bE_h)}
\leq
C(\kappa_{\calT_h}) \etadc + \omega \|\be\|_\eps
\leq
C(\kappa_{\calT_h})(1+\gp+\gd) \eta,
\end{equation*}
where we employed the $\bL_\eps^2$-estimate \eqref{eq_reliability_L2} from Lemma~\ref{lem:L2_residual}.

For the second term on the right-hand side of~\eqref{eq:Pyth_error}, 
invoking~\eqref{eq:approx_av_sharp} gives
\[
\tnorms{\bE_h-\Bestc(\bE_h)} \leq 
\tnorms{\bE_h - \calI_{h0}\upcav(\bE_h)} \leq 
C(\kappa_{\calT_h}) \left (
1 + \max_{K \in \calT_h} \frac{\omega h_K}{k\vel_{\min,\chK}}
\right )\etaj.
\]
Putting everything together yields the assertion.
\end{proof}

\subsection{Local error lower bound (local efficiency)}

We now derive efficiency estimates. To do so, we will need bubble functions
(see \cite{melenk_wohlmuth_2001a} and \cite[Section 2.7]{chaumontfrelet_vega_2022a}).
Specifically, for all $K \in \calT_h$, there exists a function $b_K \in H^1_0(K)$
with $b_K \leq 1$ such that, for all $\bw_K\in \bpolP_{k,d}$ (recall that $k\ge1$ by assumption),
\begin{equation}
\label{eq:bubble}
\|\bw_K\|_K \leq C(\kappa_{\calT_h}) k\|b_K^{\frac12}\bw_K\|_K,
\end{equation}
and
\begin{equation}
\label{eq:inverse}
\|\GRAD(b_K\bw_K)\|_K \leq C(\kappa_{\calT_h}) \frac{k}{h_K} \|b_K^{\frac12} \bw_K\|_K.
\end{equation}
Similarly, for all $F \in \calF_h$, there exists a function $b_F \in H^1_0(F)$
such that, for all $\bw_F \in \bpolP_{k,d-1}$,
\begin{equation}
\label{eq:bubble_face}
\|\bw_F\|_F \leq C(\kappa_{\calT_h}) k\|b_F^{\frac12}\bw_F\|_F,
\end{equation}
and an extension operator $\calE_{F}: \bpolP_{k,d-1} \to \bH^1_0(\tF)$ such that,
for all $\bw_F \in \bpolP_{k,d-1}$, $\calE_F(b_F\bw_F)|_F = b_F\bw_F$ and
\begin{equation}
\label{eq:stab_extension}
k h_F^{-\frac12} \|\calE_F(b_F\bw_{F})\|_{\tF}
+
k^{-1} h_F^{\frac12} \|\GRAD \calE_F(b_F\bw_{F})\|_{\tF}
\leq C(\kappa_{\calT_h})\|b_F^{\frac12}\bw_{F}\|_F.
\end{equation}

\begin{theorem}[Local efficiency] \label{th:efficiency}
For all $K \in \calT_h$, we have
\begin{equation}
\eta_K
\leq
C(\kappa_{\calT_h}) \calK_K^{\frac12} k^{\frac32}\left \{ 
\left (
1 + \frac{\omega h_K}{k \vel_{\min,\Kupf}}
\right )
\tnorm{\bE-\bE_h}_{\dagger,\Kupf}
+
\operatorname{osc}_{\Kupf}
\right \},
\end{equation}
with the data oscillation term
\begin{multline}
\operatorname{osc}_{\Kupf}^2
:=
\frac{1}{\omega^2}
\varepsilon_{\min,\Kupf}^{-1}
\times
\\
\sum_{K' \in \Kupf}
\min_{\bJ_{h} \in \Pkb}
\left \{
\frac{\omega^2 h_{K'}^2}{k^2 \vel_{\min,\Kupf}^2} \|\bJ-\bJ_{h}\|_{K'}^2
+
\frac{h_{K'}^2}{k^2} \|\DIV (\bJ-\bJ_{h})\|_{K'}^2
\right \},
\end{multline}
and the contrast coefficient
$\calK_K
:=
\max
\left \{
\frac{\varepsilon_{\max,\Kupf}}{\varepsilon_{\min,\tttK}},
\frac{\nu_{\max,\Kupf}}{\nu_{\min,\tttK}}
\right \}.$
\end{theorem}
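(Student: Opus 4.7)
The plan is to treat the three contributions $\etajK$, $\etacK$, $\etadK$ separately, in increasing order of difficulty. The nonconformity part is essentially free: since $\bE\in\Hrotz$, we have $\jump{\be}\upc_F=-\jump{\bE_h}\upc_F$ on every face, so $\etajK^2 = \cav \tfrac{\nu_{\max,\chK}k^2}{h_K}\|\jump{\be}\upc_{\partial K}\|_{\partial K}^2$ is (up to the factor $\cav$, which was built into the definition of $\tnorm{\SCAL}_{\dagger}$) directly bounded by $\tnorm{\be}_{\dagger,\chK}^2$. So only the two residual pieces require work, and since the material properties are assumed piecewise constant on $\calT_h$, all the involved residuals are polynomials once $\bJ$ is replaced by a polynomial projection $\bJ_h$, producing the data oscillation term.

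The curl part $\etacK$ is handled by the standard bubble-function machinery. For the volume part, I would set $\bR_K^h\eqq\bJ_h+\omega^2\eps\bE_h-\ROT(\bnu\bChz(\bE_h))$, use the test function $\bphi\eqq b_K\bR_K^h\in\bH^1_0(K)$, and exploit the continuous PDE integrated against $\bphi$ to obtain
$$
(\bR_K^h,\bphi)_K = \omega^2(\eps\be,\bphi)_K - (\bnu\bChz(\be),\ROT\bphi)_K - (\bJ-\bJ_h,\bphi)_K,
$$
then apply \eqref{eq:bubble}-\eqref{eq:inverse} to get $\tfrac{h_K}{k}\|\bR_K^h\|_K\lesssim k^{\alpha}\{\omega h_K\|\be\|_{\eps,K}+\|\bChz(\be)\|_{\bnu,K}+\tfrac{h_K}{k}\|\bJ-\bJ_h\|_K\}$ with an appropriate power of $k$. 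For the face part, I would use $\bphi_F\eqq\calE_F(b_F\jump{\bnu\bChz(\bE_h)}\upc_F)\in\bH^1_0(\tF)$, integrate the PDE cellwise over $K'\in\tF$, integrate by parts the curl term so that the jump of $\bnu\bChz(\bE_h)$ across $F$ pops out as the only surviving face contribution, and then invoke \eqref{eq:bubble_face}-\eqref{eq:stab_extension} together with the volume bound already proved (to absorb the $\bR_{K'}^h$ terms appearing on the right-hand side). The weights $\nu_{\min,\tttK}^{-1}$ and $\varepsilon_{\min,\tttK}^{-1}$ in $\etacK$ and the contrast factor $\calK_K$ arise when switching from the natural $K$- and $\tF$-weighted norms to the global $\eps$- and $\bnu$-weighted norms entering $\tnorm{\SCAL}_{\dagger}$.

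The divergence part $\etadK$ is handled analogously, but now the key identity is that taking divergence of the strong PDE yields $\DIV(\bJ+\omega^2\eps\bE)=0$. For the volume term, I would test $R_K^h\eqq\DIV(\bJ_h+\omega^2\eps\bE_h)$ against $\phi\eqq b_K R_K^h\in H^1_0(K)$ and, after an integration by parts, rewrite the pairing using the PDE identity so that the right-hand side involves only $\omega^2\|\eps\be\|_K\|\nabla\phi\|_K$ and $\|\bJ-\bJ_h\|_K\|\nabla\phi\|_K$. The inverse inequality \eqref{eq:inverse} on $\phi$ produces the extra factor $k/h_K$ that matches the scaling of $\etadK$. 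For the normal-jump term, I would test against $\nabla\calE_F(b_F\jump{\eps\bE_h}\upd_F)$, supported in $\tF$, and proceed as before. The oscillation contributions of both the volume and face parts combine into the single term $\operatorname{osc}_{\Kupf}$.

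The main obstacle will be the careful bookkeeping in the face-bubble argument for $\etacK$: because $\bChz$ contains the jump lifting $\bLhz$, the quantity $\bnu\bChz(\bE_h)$ on a cell $K'\in\tF$ depends not only on $\bE_h|_{K'}$ but also on jumps on all faces of $K'$. One must ensure that the residual terms involving other faces are subsumed either in the volume residual (already controlled) or in the $\etajK'$ contribution of cells in the face-patch $\Kupf$, which is why the natural patch for the final lower bound is $\Kupf$ rather than just $K$. The power $k^{3/2}$ in the final constant is the combined effect of the bubble-function inverse inequalities \eqref{eq:bubble}-\eqref{eq:inverse}, \eqref{eq:bubble_face}-\eqref{eq:stab_extension}, and the discrete trace inequality used to convert the lifting back to a face-jump norm.
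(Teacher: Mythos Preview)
Your plan is essentially the paper's proof. The paper treats the three pieces in the order $\etadK$, $\etacK$, $\etajK$: for $\etadK$ it simply cites the conforming result from \cite{chaumontfrelet_vega_2022a} (your scalar-bubble argument is exactly what that lemma does); for $\etacK$ it uses $\bv_K=b_K\br_K$ for the volume part and $\bv_F=\calE_F(b_F\br_F)$ for the face part, with the volume residual bound re-used inside the face estimate; and for $\etajK$ it observes $\etajK^2\le\tnorm{\be}_{\dagger,K}^2$. Two small remarks: the signs in your displayed identity for $(\bR_K^h,\bphi)_K$ are flipped on the first two terms (harmless, since you take absolute values), and your anticipated difficulty with the lifting $\bLhz$ does not actually materialize---the paper integrates by parts directly with the piecewise-polynomial field $\bnu\bChz(\bE_h)$ on each cell of $\tF$, and since $\bv_F$ vanishes on all faces of $\tF$ except $F$, no extra face terms appear; the lifting never needs to be unpacked.
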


\begin{proof}
Fix $K\in\calT_h$. The proof contains three parts,
respectively dedicated to providing upper bounds
for $\etadK$, $\etacK$ and $\etajK$.
\\ [10px]
\noindent {\bf (i)}
The proof that
$\etadK
\leq
C(\kappa_{\calT_h}) k^{\frac32}
\big (
\omega \|\bE-\bE_h\|_{\eps,\tK}
+
\operatorname{osc}_{\tK}
\big )
$
can be found in \cite[Lemma 3.5]{chaumontfrelet_vega_2022a}.
This proof is established for conforming edge finite elements,
but it holds verbatim in the discontinuous Galerkin setting.
\\ [10px]
\noindent {\bf (ii)}
For $\etacK$, we need to slightly adapt the proof from
\cite[Lemma 3.6]{chaumontfrelet_vega_2022a}. %, and employ bubble functions. 
The volumic residual and jump term in
$\etacK$ are estimated separately.
\\ [5px]
\noindent {\bf (iia)}
For the volume term, we introduce $\br_K := \bJ_K+\omega^2\eps\bE_h-\ROT(\bnu\bChz(\bE_h))|_K$
and $\bv_K = b_K \br_K$, with $\bJ_K$ arbitrary in $\bpolP_{k,d}$. 
We observe that $\bv_K$ vanishes on $\partial K$, so that, letting $\bv_h$
be the zero-extension of $\bv_K$ to $\Dom$, we infer that
\begin{equation*}
\|b_K^{\frac12}\br_K\|_K^2
=
(\br_K,\bv_K)_K
=
(\bJ_K,\bv_K)_K-\bss(\bE_h,\bv_h)
=
\bss(\bE-\bE_h,\bv_h)-(\bJ-\bJ_K,\bv_K)_K.
\end{equation*}
For the first term, we employ $b_K\le 1$ and \eqref{eq:inverse} to show that
\begin{align*}
&|\bss(\bE-\bE_h,\bv_h)|
\\
&\leq
\omega\|\bE-\bE_h\|_{\eps,K}\omega\epsilon_{\max,K}^{\frac12}\|\bv_K\|_K
+
\|\bChz(\bE-\bE_h)\|_{\bnu,K}\nu_{\max,K}^{\frac12}\|\ROT\bv_K\|_K
\\
&\leq
C(\kappa_{\calT_h})
\left (
(\omega\epsilon_{\max,K}^{\frac12})
\omega\|\bE-\bE_h\|_{\eps,K}
+
\nu_{\max,K}^{\frac12}
\frac{k}{h_K}\|\bChz(\bE-\bE_h)\|_{\bnu,K}
\right )
\|b_K^{\frac12}\br_K\|_K
\\
&\leq
C(\kappa_{\calT_h})
\nu_{\max,K}^{\frac12}
\frac{k}{h_K}
\left (
\frac{\omega h_K}{k\vel_K}
\omega\|\bE-\bE_h\|_{\eps,K}
+
\|\bChz(\bE-\bE_h)\|_{\bnu,K}
\right )
\|b_K^{\frac12}\br_K\|_K.
\end{align*}
The second term is simply estimated as follows
\begin{align*}
|(\bJ-\bJ_K,\bv_K)_K|
\leq
\|\bJ-\bJ_K\|_K\|\bv_K\|_K
\leq
C(\kappa_{\calT_h})
\left (\nu_{\max,K}^{\frac12}\frac{k}{h_K}\right )
\nu_{\max,K}^{-{\frac12}}
\frac{h_K}{k}
\|\bJ-\bJ_K\|_K\|b_K^{\frac12}\br_K\|_K.
\end{align*}
Combining these two bounds leads to
\begin{equation*}
\nu_{\max,K}^{-{\frac12}}\frac{h_K}{k}
\|b_K^{\frac12}\br_K\|_K
\leq
C(\kappa_{\calT_h})
\left \{
\left (1 + \frac{\omega h_K}{k\vel_K}\right )
\tnorm{\bE-\bE_h}_{\sharp,K}
+
\nu_{\max,K}^{-{\frac12}} \frac{h_K}{k} \|\bJ-\bJ_K\|_K
\right \},
\end{equation*}
and therefore
\begin{align*}
\nu_{\max,K}^{-{\frac12}}\frac{h_K}{k}
\|\br_K\|_K
&\leq C(\kappa_{\calT_h})
k\nu_{\max,K}^{-{\frac12}}\frac{h_K}{k}
\|b_K^{\frac12}\br_K\|_K
\\
&\leq
C(\kappa_{\calT_h})
k
\left \{
\left (1 + \frac{\omega h_K}{k\vel_K}\right )
\tnorm{\bE-\bE_h}_{\sharp,K}
+
\nu_{\max,K}^{-{\frac12}} \frac{h_K}{k} \|\bJ-\bJ_K\|_K
\right \}.
\end{align*}
Invoking the triangle inequality leads to
\begin{multline}
\label{tmp_volume_residual}
\nu_{\max,K}^{-\frac12}\frac{h_K}{k}
\|\bJ+\omega^2\eps\bE_h-\ROT(\bnu\bChz(\bE_h))\|_K \\
\leq
C(\kappa_{\calT_h})
k
\left \{
\left (1 + \frac{\omega h_K}{k\vel_K}\right )
\tnorm{\bE-\bE_h}_{\sharp,K}
+
\nu_{\max,K}^{-\frac12} \frac{h_K}{k} \|\bJ-\bJ_K\|_K
\right \}.
\end{multline}
\\ [5px]
\noindent {\bf (iib)}
For the jump term, we introduce $\br_F := \jump{\bnu\bChz(\bE_h)}_F\upc$
and $\bv_F := \calE_F(b_F\br_F) \in \bH^1_0(\tF)$. Since $\bv_F \in \bH^1_0(\tF)$
and $\bv_F|_F = b_F\br_F$, we have
\begin{align}
\label{tmp_jump}
\|b_F^{\frac12}\br_F\|_F^2
&=
(\br_F,\bv_F)_F
\\
\nonumber
&=
(\ROTh (\bnu\bChz(\bE_h)),\bv_F)_{\tF}
-
(\bnu\bChz(\bE_h),\ROTh \bv_F)_{\tF}
\\
\nonumber
&=
(\bnu\bChz(\bE-\bE_h),\ROT \bv_F)_{\tF}
-
(\ROTh (\bnu\bChz(\bE-\bE_h)),\bv_F)_{\tF}.
\end{align}
For the first term, we can immediately write that
\begin{align*}
|(\bnu\bChz(\bE-\bE_h),\ROT \bv_F)_{\tF}|
&\leq
\nu_{\max,\tF}^{\frac12}\|\bChz(\bE-\bE_h)\|_{\bnu,\tF}\|\ROT \bv_F\|_{\tF}
\\
&\leq
C(\kappa_{\calT_h})
k h_F^{-\frac12} \nu_{\max,\tF}^{\frac12}\|\bChz(\bE-\bE_h)\|_{\bnu,\tF}\|b_F^{\frac12}\br_F\|_{F},
\end{align*}
where we employed \eqref{eq:stab_extension}. We infer that
\begin{multline}
\label{tmp_jump_1}
\nu_{\max,\tF}^{-\frac12}
\bigg( \frac{h_F}{k} \bigg)^{\frac12}
|(\bnu\bChz(\bE-\bE_h),\ROT \bv_F)_{\tF}|
\leq
C(\kappa_{\calT_h})
k^{\frac12}\|\bChz(\bE-\bE_h)\|_{\bnu,\tF}\|b_F^{\frac12}\br_F\|_{F}.
\end{multline}
For the second term, we first observe that
\begin{align*}
\ROTh (\bnu\bChz(\bE-\bE_h))
&=
\bJ+\omega^2\eps\bE - \ROTh(\bnu\bChz(\bE_h))
\\
&=
\omega^2\eps(\bE-\bE_h) + \bJ + \omega^2\eps\bE_h - \ROTh(\bnu\bChz(\bE_h)),
\end{align*}
and therefore,
\begin{align*}
&\|\ROTh (\bnu\bChz(\bE-\bE_h)\|_{\tF}
\\
&\leq
\omega
\epsilon_{\max,\tF}^{\frac12}
\omega\|\bE-\bE_h\|_{\eps,\tF}
+
\|\bJ + \omega^2\eps\bE_h - \ROTh(\bnu\bChz(\bE_h))\|_{\tF}
\\
&\leq \nu_{\max,\tF}^{\frac12} \frac{k}{h_{F}}
\left (
\frac{\omega h_{F}}{k\vel_{\tF}}
\omega\|\bE-\bE_h\|_{\eps,\tF}
+
\nu_{\max,\tF}^{-\frac12}\frac{h_{F}}{k}\|\bJ + \omega^2\eps\bE_h - \ROTh(\bnu\bChz(\bE_h))\|_{\tF}
\right ).
\end{align*}
Combining this last estimate with \eqref{eq:stab_extension}
and invoking the Cauchy--Schwarz inequality gives
\begin{multline}
\label{tmp_jump_2}
\nu_{\max,\tF}^{-\frac12} \bigg( \frac{h_F}{k} \bigg)^{\frac12}
|(\ROT (\bnu\bChz(\bE-\bE_h)),\bv_F)_{\tF}|
\leq
C(\kappa_{\calT_h}) \times
\\
\left (
\frac{\omega h_{F}}{k \vel_{\tF}}
\omega\|\bE-\bE_h\|_{\eps,\tF}
+
\nu_{\max,\tF}^{-\frac12}\frac{h_{F}}{k}\|\bJ + \omega^2\eps\bE_h - \ROTh(\bnu\bChz(\bE_h))\|_{\tF}
\right )
k^{\frac12}\|b_F^{\frac12}\br_F\|_F.
\end{multline}
We can now plug \eqref{tmp_jump_1} and \eqref{tmp_jump_2} in \eqref{tmp_jump},
leading to
\begin{align*}
\nu_{\max,\tF}^{-\frac12}&\bigg( \frac{h_F}{k} \bigg)^{\frac12}
\|b_F^{\frac12} \br_F\|_F
\\
\leq{}&
C(\kappa_{\calT_h})
k^{\frac12}
\left (
\frac{\omega h_{F}}{k\vel_{\tF}}
\omega\|\bE-\bE_h\|_{\eps,\tF}
+
\nu_{\max,\tF}^{-\frac12}\frac{h_{F}}{k}\|\bJ + \omega^2\eps\bE_h - \ROTh(\bnu\bChz(\bE_h))\|_{\tF}
\right )
\\
&+
C(\kappa_{\calT_h})
k^{\frac12}\|\bChz(\bE-\bE_h)\|_{\bnu,\tF}
\\
\leq{}&
C(\kappa_{\calT_h}) k^{\frac12} \left (1+ \frac{\omega h_{F}}{k\vel_{\tF}}\right )
\tnorm{\bE-\bE_h}_{\sharp,\tF}
\\
&+
C(\kappa_{\calT_h}) k^{\frac12}
\nu_{\max,\tF}^{-\frac12}\frac{h_{F}}{k}\|\bJ + \omega^2\eps\bE_h - \ROTh(\bnu\bChz(\bE_h))\|_{\tF}
\\
\leq{}&
C(\kappa_{\calT_h}) k^{\frac12}
\left (
\left (1+ \frac{\omega h_F}{k\vel_{\tF}}\right )
\tnorm{\bE-\bE_h}_{\sharp,\tF}
+
\nu_{\max,\tF}^{-\frac12}\frac{h_F}{k}\|\bJ-\bJ_h\|_{\tF}
\right ),
\end{align*}
owing to \eqref{tmp_volume_residual} and the shape-regularity of the mesh. Recalling
the definition of $\Kupf$,
it follows from \eqref{eq:bubble_face} that
\begin{equation*}
\etacK
\leq
C(\kappa_{\calT_h}) k^{\frac32}
\left (
\left (1+ \frac{\omega h_K}{k\vel_{\Kupf}}\right )
\tnorm{\bE-\bE_h}_{\sharp,\chK}
+
\nu_{\max,\Kupf}^{-\frac12}\frac{h_K}{k}\|\bJ-\bJ_h\|_{\Kupf} \right ).
\end{equation*}
\\ [10px]
\noindent {\bf (iii)}
For the last part of the estimator, we simply use that
$\etajK^2
\leq
\tnorm{\bE-\bE_h}_{\sharp,K}^2+\etajK^2
=
\tnorm{\bE-\bE_h}_{\dagger,K}^2.$
\end{proof}

\section{Bound on approximation and divergence conformity factors}
\label{sec:bnd_app_fac}

In this section, we show that the factors introduced in Section~\ref{sec:def_fac}
tend to zero as the mesh is refined.
For positive real numbers $A$ and $B$, we abbreviate as $A\lesssim B$
the inequality $A\le CB$ with a generic (nondimensional) constant $C$ whose value
can change at each occurrence as long as it is independent of the mesh size, the
frequency parameter $\omega$, and, whenever relevant, any function involved in
the bound. The constant $C$ can depend on the shape-regularity of the mesh,
the polynomial degree $k$, the (global) contrast in the coefficients
(i.e. $\epsilon_{\max}/\epsilon_{\min}$ and $\nu_{\max}/\nu_{\min}$),
and the shape of the domain $\Dom$ (but not on its size).

For simplicity, we focus on the case where the parameters $\eps$ and $\bnu$
are piecewise constant on a polyhedral partition of $\Dom$\rev{, and refer
the reader to Remark~\ref{rem:rough} for the more general case where the 
material coefficients are just bounded from above and from below away from zero.}
Under the assumption of piecewise constant coefficients
(see \cite{costabel_dauge_nicaise_1999a,Jochmann_maxwell_1999,BoGuL:13}), 
there exists $s\in (0,\frac12)$
such that, for all $\bv \in \Hrotz$ with $\eps\bv \in \Hdivdivz$, and for all $\bw \in \Hrot$ with
$\bnu^{-1} \bw \in \Hdivzdivz$, we have $\bv,\bw \in \bH^s(\Dom)$ with
\begin{equation}
|\bv|_{\bH^s(\Dom)}
\lesssim
\ell_\Dom^{1-s} \nu_{\min}^{-\frac12} \|\ROTZ \bv\|_{\bnu},
\qquad
|\bw|_{\bH^s(\Dom)}
\lesssim
\ell_\Dom^{1-s} \epsilon_{\max}^{\frac12} \|\ROT \bw\|_{\eps^{-1}}.
\end{equation}
The length scale $\ell_D$, e.g., the diameter of $D$, is introduced
for dimensional consistency. We will also need commuting (quasi-)interpolation operators,
$\calJ_{h}\upc: \Hrot \to \Pkbrot$ and $\calJ_{h}\upd: \Hdiv
\to \bP\upd_{k}(\calT_h) := \Pkb \cap \Hdiv$
such that $\ROT (\calJ_{h}\upc(\bv)) = \calJ_{h}\upd(\ROT\bv)$ for all
$\bv \in \Hrot$ and
\begin{equation}
\label{eq_interpolation}
\|\bv-\calJ_{h}\upc(\bv)\| \lesssim h^s |\bv|_{\bH^s(\Dom)},
\qquad
\|\bw-\calJ_{h}\upd(\bw)\| \lesssim \|\bw\|,
\end{equation}
for all $\bv \in \Hrot \cap \bH^s(\Dom)$ and all $\bw \in \Hdiv$.
Since we are working on simplicial meshes, we can invoke the
operators devised in \cite[Section 20]{EG_volI} (see also
\cite{Schoberl:01,ArnFW:06,Christiansen:07,ChrWi:08}) using edge (N\'ed\'elec) and
Raviart--Thomas finite elements. 

\begin{proposition}[Primal approximation factor]
Let $\gp$ be defined in \eqref{eq:def_gp}. We have
\begin{equation*}
\gp 
\lesssim
(1+\bst)
\left (
\frac{\omega\ell_{\Dom}}{\vel_{\min}}
\right )^{1-s}
\left (
\frac{\omega h}{\vel_{\min}}
\right )^{s},
\end{equation*}
where $\bst$ is the stability constant introduced in \eqref{eq:def_bst}.
\end{proposition}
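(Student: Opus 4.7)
The plan is to exhibit a single candidate $\bv_h\upc\in\Pkbrotz$, namely the image of the adjoint solution under the commuting quasi-interpolation operator $\calJ_h\upc$, and to bound $\omega\tnorm{\bzeta_\btheta-\bv_h\upc}$ by combining the fractional regularity of $\bzeta_\btheta$ with the approximation estimate~\eqref{eq_interpolation}. Fix $\btheta\in\Hrotzrotz^\perp$ with $\|\btheta\|_\eps=1$. Since the bilinear form $b$ is symmetric, the adjoint problem~\eqref{eq:adjoint} coincides with the primal one, so the definition~\eqref{eq:def_bst} of $\bst$ yields $\omega\tnorm{\bzeta_\btheta}\leq\bst$; in particular, $\omega^2\|\bzeta_\btheta\|_\eps\leq\bst$ and $\|\ROTZ\bzeta_\btheta\|_\bnu\leq\bst/\omega$.

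Set $\bv_h\upc:=\calJ_h\upc(\bzeta_\btheta)\in\Pkbrotz$, which is admissible because $\calJ_h\upc$ preserves the homogeneous tangential boundary condition, and recall the commuting property $\ROT\bv_h\upc=\calJ_h\upd(\ROT\bzeta_\btheta)$. Since $\bzeta_\btheta\in\bX\upc_0$ satisfies $\DIV(\eps\bzeta_\btheta)=0$, the first fractional regularity estimate of the excerpt gives $|\bzeta_\btheta|_{\bH^s(\Dom)}\lesssim\ell_\Dom^{1-s}\nu_{\min}^{-\frac12}\|\ROTZ\bzeta_\btheta\|_\bnu\lesssim\ell_\Dom^{1-s}\nu_{\min}^{-\frac12}\bst/\omega$. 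Combined with~\eqref{eq_interpolation}, this yields
\[
\omega\cdot\omega\|\bzeta_\btheta-\bv_h\upc\|_\eps\lesssim\omega^2\epsilon_{\max}^{\frac12}h^s|\bzeta_\btheta|_{\bH^s(\Dom)}\lesssim\bst\,\bigl(\omega\ell_\Dom/\vel_{\min}\bigr)^{1-s}\bigl(\omega h/\vel_{\min}\bigr)^s.
\]

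For the curl contribution, set $\bw:=\bnu\ROTZ\bzeta_\btheta$. Since $\bzeta_\btheta\in\Hrotz$, the field $\ROTZ\bzeta_\btheta$ is divergence-free with vanishing normal trace on $\partial\Dom$, so $\bnu^{-1}\bw\in\Hdivzdivz$; the strong form of the adjoint equation moreover gives $\ROT\bw=\eps(\omega^2\bzeta_\btheta+\btheta)$, whence $\|\ROT\bw\|_{\eps^{-1}}\leq\omega^2\|\bzeta_\btheta\|_\eps+\|\btheta\|_\eps\leq 1+\bst$. The second regularity estimate then yields $|\bw|_{\bH^s(\Dom)}\lesssim\ell_\Dom^{1-s}\epsilon_{\max}^{\frac12}(1+\bst)$. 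Using that $\bnu$ is piecewise constant on a mesh-conforming partition, the regularity of $\bw$ transfers to $\ROTZ\bzeta_\btheta$ cellwise up to a factor of order $\nu_{\min}^{-1}$; applying the local approximation estimate of $\calJ_h\upd$ and summing over cells (the bounded contrast being absorbed in $\lesssim$) gives
\[
\omega\|\ROTZ\bzeta_\btheta-\calJ_h\upd(\ROT\bzeta_\btheta)\|_\bnu\lesssim\omega h^s\nu_{\min}^{-\frac12}|\bw|_{\bH^s(\Dom)}\lesssim(1+\bst)\bigl(\omega\ell_\Dom/\vel_{\min}\bigr)^{1-s}\bigl(\omega h/\vel_{\min}\bigr)^s.
\]
Adding the two bounds and taking the supremum over $\btheta$ concludes the proof.

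The main technical difficulty is the curl contribution: the second regularity result provides $\bH^s$-smoothness for $\bw=\bnu\ROTZ\bzeta_\btheta$ rather than for $\ROTZ\bzeta_\btheta$ itself, so one must transfer regularity via multiplication by the piecewise constant $\bnu^{-1}$, which relies on the standing assumption that the mesh resolves the material partition and that the local contrast of $\bnu$ is uniformly controlled. Once the bookkeeping of the weights $\nu_{\min}$, $\epsilon_{\max}$, and $\vel_{\min}$ is carried out, both parts of the $\tnorm{\SCAL}$-norm fit the same scaling $(\omega\ell_\Dom/\vel_{\min})^{1-s}(\omega h/\vel_{\min})^s$, with the $(1+\bst)$ prefactor coming from the curl term dominating the $\bst$ prefactor from the $\bL^2_\eps$-part.
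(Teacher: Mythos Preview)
The paper does not give a self-contained proof here; it simply refers to Lemma~5.1 of the companion work on conforming edge elements, since $\gp$ is defined exactly as in that conforming setting (the minimum runs over $\Pkbrotz$, not over the broken space). Your argument reconstructs the natural proof, and the strategy---take $\bv_h\upc=\calJ_{h}\upc(\bzeta_\btheta)$, use the $\bH^s$-regularity of $\bzeta_\btheta$ for the $\bL^2_\eps$-part and of $\bnu\ROTZ\bzeta_\btheta$ for the curl part---is essentially what is done in that reference.

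Two technical points deserve attention, though neither is fatal. First, the operator $\calJ_{h}\upc$ as introduced in Section~\ref{sec:bnd_app_fac} maps $\Hrot$ into $\Pkbrot$, not $\Pkbrotz$; you implicitly need the boundary-preserving variant, which is standard but is not the operator declared here. Second, the only property recorded for $\calJ_{h}\upd$ in~\eqref{eq_interpolation} is the $\bL^2$-stability $\|\bw-\calJ_{h}\upd(\bw)\|\lesssim\|\bw\|$, whereas your curl bound needs the fractional rate $\|\bw-\calJ_{h}\upd(\bw)\|\lesssim h^s|\bw|_{\bH^s}$; this is again standard for commuting quasi-interpolants but must be asserted separately. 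Relatedly, transferring $\bH^s$-regularity from $\bnu\ROTZ\bzeta_\btheta$ to $\ROTZ\bzeta_\btheta$ by multiplying with the piecewise constant $\bnu^{-1}$ is \emph{not} automatic for the global Sobolev--Slobodeckij seminorm (multiplication by a discontinuous function is unbounded on $\bH^s$ for $s>0$); the correct step is to work with the broken seminorm $\sum_K|\cdot|_{\bH^s(K)}^2$ under the standing assumption that the mesh resolves the material partition, and then invoke the local form of the approximation estimate. With these clarifications your proof is complete and matches the approach of the cited reference.
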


\begin{proof}
See \cite[Lemma 5.1]{ThCFE:23}.
\end{proof}

\begin{lemma}[Dual approximation factor]
Let $\gd$ be defined in \eqref{eq:def_gd}. We have
\begin{equation}
\label{eq_bound_gd}
\gd
\lesssim
(1+\bst)
\left (\frac{\omega \ell_{\Dom}}{\vel_{\min}}\right )^{1-s}
\left (\frac{\omega h}{\vel_{\min}}\right )^s.
\end{equation}
\end{lemma}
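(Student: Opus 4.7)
The plan is to mirror the argument used for the primal approximation factor (\cite[Lemma~5.1]{ThCFE:23}), but working with the auxiliary field $\bnu\ROTZ\bzeta_\btheta$ rather than $\bzeta_\btheta$ itself. First I would recast the adjoint problem~\eqref{eq:adjoint} in strong form: testing against smooth compactly supported fields gives $\bnu\ROTZ\bzeta_\btheta\in\Hrot$ with $\ROT(\bnu\ROTZ\bzeta_\btheta)=\omega^2\eps\bzeta_\btheta+\eps\btheta$ in $\Ldeuxd$. Since $b$ is symmetric, the dual solution inherits the primal stability of Lemma~\ref{lem:infsup}, i.e., $\omega\tnorm{\bzeta_\btheta}\le\bst\|\btheta\|_\eps$. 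Normalizing by $\|\btheta\|_\eps=1$ then produces the key a~priori estimate $\|\ROT(\bnu\ROTZ\bzeta_\btheta)\|_{\eps^{-1}}\lesssim 1+\bst$.

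Next I would derive $\bH^s$-regularity of $\bnu\ROTZ\bzeta_\btheta$. Since $\bzeta_\btheta\in\Hrotz$, the field $\ROTZ\bzeta_\btheta$ is divergence-free and its normal trace on $\front$ vanishes, so $\bnu^{-1}(\bnu\ROTZ\bzeta_\btheta)\in\Hdivzdivz$. The second embedding recalled just before the lemma, applied with $\bw=\bnu\ROTZ\bzeta_\btheta$ and combined with Step~1, then yields $|\bnu\ROTZ\bzeta_\btheta|_{\bH^s(\Dom)}\lesssim\ell_\Dom^{1-s}\epsilon_{\max}^{1/2}(1+\bst)$.

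The last step is to choose $\bPhi\upc_h:=\calJ_h\upc(\bnu\ROTZ\bzeta_\btheta)\in\Plbrot$, which is legitimate since $\ell\ge k-1$ and the commuting N\'ed\'elec interpolator lands in $\bP_{k-1}\upb(\calT_h)\cap\Hrot\subset\Plbrot$. For the $\bL^2_{\bnu^{-1}}$-summand of $\tnormH{\cdot}$, the approximation estimate in~\eqref{eq_interpolation} combined with Step~2 gives the bound $(1+\bst)h^s\ell_\Dom^{1-s}\vel_{\min}^{-1}$; after multiplication by $\omega$, this reads precisely as $(1+\bst)(\omega\ell_\Dom/\vel_{\min})^{1-s}(\omega h/\vel_{\min})^s$. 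For the curl-summand $\|\tnu^{-1/2}\th\ROT(\bnu\ROTZ\bzeta_\btheta-\bPhi\upc_h)\|$, the commuting identity $\ROT\calJ_h\upc=\calJ_h\upd\ROT$, the $L^2$-stability of $\calJ_h\upd$ from~\eqref{eq_interpolation}, and Step~1 together produce an $O((1+\bst)\omega h/\vel_{\min})$ bound, which is absorbed into the first one using $h\le\ell_\Dom$ and $s<1$.

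The main subtlety I expect is in Step~2: one must verify carefully that the vanishing tangential trace of $\bzeta_\btheta$ on $\front$ transfers to a vanishing normal trace of $\ROTZ\bzeta_\btheta$ in the correct distributional sense, so that the embedding applies to $\bnu\ROTZ\bzeta_\btheta$. Once this is granted, the rest is a direct ``swapped'' adaptation of the primal argument from~\cite{ThCFE:23}.
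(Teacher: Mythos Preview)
Your proposal is correct and follows essentially the same route as the paper's own proof: set $\bphi_\btheta:=\bnu\ROTZ\bzeta_\btheta$, obtain $\|\ROT\bphi_\btheta\|_{\eps^{-1}}\le(1+\bst)\|\btheta\|_\eps$ from the strong form and the stability constant, deduce $\bH^s$-regularity via $\bnu^{-1}\bphi_\btheta\in\Hdivzdivz$, and then approximate by $\calJ_h\upc(\bphi_\btheta)$, bounding the $\bL^2_{\bnu^{-1}}$-part by the $\bH^s$-approximation estimate and the curl-part via the commuting property and $\bL^2$-stability of $\calJ_h\upd$, absorbing the latter using $h\le\ell_\Dom$. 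You are in fact slightly more careful than the paper in noting that the interpolant must land in $\Plbrot$ (relevant when $\ell=k-1$).
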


\begin{proof}
Consider a right-hand side $\btheta \in \Hrotzrotz^\perp$ and the associated adjoint solution
$\bzeta_{\btheta} \in \Hrotz$ defined in \eqref{eq:adjoint}. Set
$\bphi_{\btheta} := \bnu \ROTZ \bzeta_{\btheta}$. The strong form of Maxwell's equations
ensures that
\begin{equation*}
\ROT \bphi_{\btheta} = \eps \btheta+\omega^2 \eps \bzeta_\btheta,
\end{equation*}
so that $\bphi_{\btheta} \in \Hrot$ with
\begin{equation}
\label{tmp_rot_phi}
\|\ROT \bphi_{\btheta}\|_{\eps^{-1}}
\leq
\|\btheta\|_{\eps}+\omega^2 \|\bzeta_\btheta\|_{\eps}
\leq
(1+\bst) \|\btheta\|_{\eps}.
\end{equation}
Besides, since $\bnu^{-1}\bphi_{\btheta} \in \Hdivzdivz$,
we infer that $\bphi_{\btheta} \in \bH^s(\Dom)$ with
\begin{equation}
\label{tmp_phi_Hs}
|\bphi_{\btheta}|_{\bH^s(\Dom)}
\lesssim
\ell_{\Dom}^{1-s}\epsilon_{\max}^{\frac12}
\|\ROT \bphi_{\btheta}\|_{\eps^{-1}}.
\end{equation}

We are now ready to bound $\gd$. We notice that
\begin{align*}
\gd^2 &\le \omega^2 \tnormH{\bphi_\btheta-\calJ_{h}\upc (\bphi_\btheta)}^2 \\
&= \omega^2 \|\bphi_\btheta-\calJ_{h}\upc (\bphi_\btheta)\|_{\bnu^{-1}}^2
+ \omega^2 \|\tnu^{-\frac12}\th(\ROT\bphi_\btheta-\calJ_{h}\upd (\ROT \bphi_\btheta))\|^2,
\end{align*}
where we employed the commuting property satisfied by $\calJ_{h}\upc$ and $\calJ_{h}\upd$.
We bound the two terms on the right-hand side. 
For the first term, invoking \eqref{eq_interpolation} and \eqref{tmp_phi_Hs}, we have
\begin{multline}
\label{tmp_gamma_L2}
\omega \|\bphi_\btheta-\calJ_{h}\upc (\bphi_\btheta)\|_{\bnu^{-1}}
\lesssim
\omega \nu_{\min}^{-\frac12} h^s |\bphi_\btheta|_{\bH^s(\Dom)}
\\
\lesssim
\omega \ell_{\Dom}^{1-s} \frac{h^s}{\vel_{\min}} \|\ROT \bphi_\btheta\|_{\eps^{-1}}
=
\left (\frac{\omega \ell_{\Dom}}{\vel_{\min}}\right )^{1-s}
\left (\frac{\omega h}{\vel_{\min}}\right )^s
\|\ROT \bphi_\btheta\|_{\eps^{-1}}.
\end{multline}
For the second term, using \eqref{eq_interpolation}, we can write
\begin{align}
\label{tmp_gamma_curl}
\omega \|\tnu^{-\frac12}\th(\ROT\bphi_\btheta-\calJ_{h}\upd (\ROT \bphi_\btheta))\|
&\leq
\omega h \nu_{\min}^{-\frac12}
\|\ROT\bphi_\btheta-\calJ_{h}\upd (\ROT \bphi_\btheta)\|
\nonumber \\
&\lesssim
\omega h \nu_{\min}^{-\frac12} \|\ROT\bphi_\btheta\|
\lesssim
\frac{\omega h}{\vel_{\min}}\|\ROT\bphi_\btheta\|_{\eps^{-1}}.
\end{align}
Combining \eqref{tmp_rot_phi}, \eqref{tmp_gamma_L2} and \eqref{tmp_gamma_curl}
and observing that $h\le \ell_D$ proves the assertion.
\end{proof}

\begin{lemma}[Divergence conformity factor]
Let $\gdiv$ be defined in~\eqref{eq_gamma_bX_DG}. 
We have
\begin{equation}
\label{eq:bnd_on_gdiv}
\gdiv
\lesssim
\left (\frac{\omega \ell_{\Dom}}{\vel_{\min}}\right )^{1-s}
\left (\frac{\omega h}{\vel_{\min}}\right )^s.
\end{equation}
\end{lemma}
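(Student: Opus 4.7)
The plan is to decompose $\bv_h$ into a conforming surrogate and a nonconformity remainder, reducing the estimate to a corresponding bound in the conforming setting plus a controllable correction. Fix $\bv_h \in \bX\upb_h$ with $\|\bChz(\bv_h)\|_\bnu^2 + \snormnc{\bv_h}^2 = 1$, and pick a minimizer $\bv_h^c \in \Pkbrotz$ in~\eqref{eq:def_JMP}, so that $\tnormE{\bv_h - \bv_h^c} = \snormnc{\bv_h}$.

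I would split $\bPi\upc_0(\bv_h) = \bPi\upc_0(\bv_h - \bv_h^c) + \bPi\upc_0(\bv_h^c)$ and treat each piece in turn. The nonconforming remainder is controlled by the $\bL^2_\eps$-contractivity of $\bPi\upc_0$ together with the weighted inverse inequality $\|\bw_h\|_\eps \lesssim (h/\vel_{\min}) \tnormE{\bw_h}$ built into the first term of~\eqref{eq:tnormE}, giving $\omega \|\bPi\upc_0(\bv_h - \bv_h^c)\|_\eps \lesssim (\omega h/\vel_{\min})\snormnc{\bv_h}$. For the conforming piece, I would further split $\bv_h^c = \bv_h^{c,0} + \bv_h^{c,\Pi}$ with $\bv_h^{c,\Pi} \eqq \bPi\upc_{h0}(\bv_h^c) \in \Pkbrotzrotz$ and $\bv_h^{c,0} \in \Pkbrotz \cap \Pkbrotzrotz^\perp$. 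Since $\bPi\upc_{h0}(\bv_h) = \bzero$ by the definition of $\bX\upb_h$, the kernel component satisfies $\bv_h^{c,\Pi} = \bPi\upc_{h0}(\bv_h^c - \bv_h)$ and inherits the same bound; because $\bv_h^{c,\Pi} \in \Hrotzrotz$, one also has $\bPi\upc_0(\bv_h^{c,\Pi}) = \bv_h^{c,\Pi}$.

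The remaining piece $\bv_h^{c,0}$ is $\Hrotz$-conforming and $\bL^2_\eps$-orthogonal to the discrete kernel, so it falls under the scope of the conforming analog of this lemma treated in~\cite{ThCFE:23}, giving
\[
\omega\|\bPi\upc_0(\bv_h^{c,0})\|_\eps \lesssim \left(\frac{\omega\ell_\Dom}{\vel_{\min}}\right)^{1-s}\left(\frac{\omega h}{\vel_{\min}}\right)^s \|\ROTZ \bv_h^{c,0}\|_\bnu.
\]
Since $\ROTZ \bv_h^{c,0} = \ROTZ \bv_h^c = \bChz(\bv_h^c)$ and $\|\bChz(\bv_h^c)\|_\bnu \le \|\bChz(\bv_h)\|_\bnu + \tnormE{\bv_h - \bv_h^c} \le \sqrt 2$ by the normalization, combining the three bounds and using $h \le \ell_\Dom$ to absorb the $\omega h/\vel_{\min}$ contribution into $(\omega\ell_\Dom/\vel_{\min})^{1-s}(\omega h/\vel_{\min})^s$ yields~\eqref{eq:bnd_on_gdiv}.

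The main obstacle is to justify the conforming analog invoked in the last step. Writing $\bPi\upc_0(\bv_h^{c,0}) = \GRAD \phi_0$ modulo the finite-dimensional space of harmonic fields, the potential $\phi_0 \in H^1_0(\Dom)$ solves a Poisson problem whose source $-\DIV(\eps \bv_h^{c,0})$ is controllably small in $H^{-1+s}(\Dom)$ precisely because $\bv_h^{c,0}$ is $\bL^2_\eps$-orthogonal to all discrete $H^1_0$-gradients of degree at most $k+1$. Piecewise-constant-coefficient elliptic regularity then yields $\phi_0 \in H^{1+s}(\Dom)$ with a quantitative estimate, and the commuting quasi-interpolations $\calJ_h\upc$, $\calJ_h\upd$ introduced at the beginning of this section allow one to close the argument in the same spirit as the proofs of the bounds on $\gp$ and $\gd$.
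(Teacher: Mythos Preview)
Your proposal is correct and follows essentially the same route as the paper: decompose $\bPi\upc_0(\bv_h)$ into a nonconforming remainder controlled by $\snormnc{\bv_h}$ via the weighted bound built into $\tnormE{\SCAL}$, and a conforming piece $\bv_h^{c,0}=\bv_h^c-\bPi\upc_{h0}(\bv_h^c)$ handled by the conforming analogue from \cite[Lemma~5.2]{ThCFE:23}. The paper merely packages your first two terms together as $\bPi\upc_0((I-\bPi\upc_{h0})(\bv_h-\bv_h^c))$ and cites the external lemma directly rather than sketching its proof, but the argument is otherwise identical.
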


\begin{proof}
Let $\bv_h \in \bX\upb_h$ and consider an arbitrary $\bv\upc_h\in \Pkbrotz$.
Since $\bPi\upc_{h0}(\bv_h) = \bzero$ by assumption, we have
\begin{align*}
\bPi\upc_0(\bv_h) &= \bPi\upc_0(\bv_h-\bv\upc_h) + \bPi\upc_0(\bv\upc_h) \\
&= \bPi\upc_0((I-\bPi\upc_{h0})(\bv_h-\bv\upc_h)) + \bPi\upc_0(\bv\upc_h-\bPi\upc_{h0}(\bv\upc_h)).
\end{align*}
Multiplying by $\omega$, and invoking the triangle inequality and the $\bL^2_\eps$-stability
of the projection operators, we infer that
\begin{equation} \label{eq:tmptmp}
\omega \|\bPi\upc_0(\bv_h)\|_\eps \leq \omega \|\bv_h-\bv\upc_h\|_\eps + \omega \|\bPi\upc_0(\bv\upc_h-\bPi\upc_{h0}(\bv\upc_h))\|_\eps.
\end{equation}
For the second term on the right-hand side of~\eqref{eq:tmptmp}, we invoke \cite[Lemma 5.2]{ThCFE:23} which gives
\begin{align*}
\omega \|\bPi\upc_0(\bv\upc_h-\bPi\upc_{h0}(\bv\upc_h))\|_\eps
&\lesssim
\left (\frac{\omega\ell_\Dom}{\vel_{\min}}\right )^{1-s}
\left (\frac{\omega h}{\vel_{\min}}\right )^{s}
\|\ROTZ(\bv\upc_h-\bPi\upc_{h0}(\bv_h\upc))\|_\bnu \\
&\lesssim
\left (\frac{\omega\ell_\Dom}{\vel_{\min}}\right )^{1-s}
\left (\frac{\omega h}{\vel_{\min}}\right )^{s}
\big(\|\bChz(\bv_h)\|_\bnu+ \|\bChz(\bv_h-\bv\upc_h)\|_\bnu\big),
\end{align*}
where we used that $\ROTZ\bPi\upc_{h0}(\bv_h\upc)=\bzero$ and the triangle inequality.
For the first term on the right-hand side of~\eqref{eq:tmptmp}, we observe that
\begin{equation*}
\omega \|\bv_h-\bv_h\upc\|_{\eps}
\leq
\frac{\omega h}{\vel_{\min}}
\|\tnu^{\frac12}\th^{-1}(\bv_h-\bv_h\upc)\|
\leq
\frac{\omega h}{\vel_{\min}}
\tnormE{\bv_h-\bv\upc_h}.
\end{equation*}
Combining this bound with the above two bounds and since $h\le \ell_D$, this gives
\begin{equation*}
\omega\|\bPi\upc_0(\bv_h)\|_\eps
\lesssim
\left (\frac{\omega\ell_\Dom}{\vel_{\min}}\right )^{1-s}
\left (\frac{\omega h}{\vel_{\min}}\right )^{s}
\big(
\|\bChz(\bv_h)\|_\bnu^2
+
\tnormE{\bv_h-\bv\upc_h}^2
\big)^{\frac12}.
\end{equation*}
The bound~\eqref{eq:bnd_on_gdiv} 
follows by taking the minimum over $\bv\upc_h \in \Pkbrotz$ and recalling the 
definition~\eqref{eq:def_JMP} of the $\snormnc{\SCAL}$-seminorm.
\end{proof}

% The proof that the dual approximation $\gd$ tends to zero as the mesh
% size is refined is similar to the one for $\gp$. It is detailed hereafter.
% In the interesting case where $\omega\ell_{\Dom}/\vel_{\min} \gtrsim 1$ and
% $\omega h/\vel_{\min} \lesssim 1$, the upper bounds on $\gd$ and $\gp$ have
% the same behavior for all values of $s$.

% Finally we bound the divergence conformity factor. As in the conforming
% case, it is typically ``smaller'' than $\gp$ (and $\gd$), and remains bounded
% as long as a constant number of elements per wavelength is maintained.

\begin{remark}[Rough coefficients] \label{rem:rough}
It is possible to show that the above factors tend to zero \rev{(without a specific algebraic rate) when the material coefficients are just bounded from above and from below away from zero. The proof
hinges on a compactness result from \cite{Weber:80}.}
We refer the reader to the discussions in
\cite[Section 5.2]{ThCFE:23} \rev{which can be readily extended to the present
nonconforming setting. Details are skipped for brevity.}
\end{remark}

\begin{remark}[\rev{Improved decay rates}] \label{rem:improved}
One can show improved convergence rates for the approximation factors 
by assuming extra regularity \rev{on the material coefficients and the domain}. 
We refer the reader to the discussion in
\cite[Remark~5.5]{ThCFE:23} for more details.
\end{remark}

\ifHAL
\bibliographystyle{plain}
\bibliography{biblio}
\else
\bibliographystyle{amsplain}
\bibliography{biblio}
\fi

\end{document}